\title[The decategorification of bordered Floer homology]{The decategorification of bordered Heegaard Floer homology}
\author{Ina Petkova}
\address {Department of Mathematics, Rice University\\ Houston, TX 77005}
\email {ina@rice.edu}
\theoremstyle{plain}
\newtheorem{theorem}{Theorem}
\newtheorem{proposition}[theorem]{Proposition}
\newtheorem{lemma}[theorem]{Lemma}
\newtheorem{defn}[theorem]{Definition}
\renewenvironment{proof}[1][\proofname]{\par
  \pushQED{\qed}%
  \normalfont \topsep6\p@\@plus6\p@\relax
  \trivlist
  \item[\hskip\labelsep
        \bfseries
    #1\@addpunct{.}]\ignorespaces
}{%
  \popQED\endtrivlist\@endpefalse
}
\def\remark{{\bf {\bigskip}{\noindent}Remark. }}
\def\note{{\bf {\bigskip}{\noindent}Note: }}
\def\ackn{{\bf {\bigskip}{\noindent}Acknowledgments. }}
\def\title{\em}
\def\bar{\overline}
\def\example{{\bf {\bigskip}{\noindent}Example: }}
\def\remark{{\bf {\bigskip}{\noindent}Remark. }}
\def\note{{\bf {\bigskip}{\noindent}Note: }}
\def\ackn{{\bf {\bigskip}{\noindent}Acknowledgments. }}
\def\bar{\overline}
\newcommand{\bdy}{\partial}
\newcommand{\ha}{\langle h_A \rangle }
\newcommand{\hd}{\langle h_D \rangle }
\newcommand{\az}{\mathcal{A}(\zz)}
\newcommand{\cala}{\mathcal{A}}
\newcommand{\ainf}{\mathcal{A}_\infty}
\newcommand{\gr}{\textrm{gr}}
\newcommand{\hs}{(\mathcal H, \mathfrak s)}
\newcommand{\ghs}{\mathfrak S(\mathcal H, \mathfrak s)}
\newcommand{\grg}{Grothendieck group }
\newcommand{\sss}{{\bf s}}
\newcommand{\ttt}{{\bf t}}
\newcommand{\fs}{f_{\bf s}}
\newcommand{\xx}{{\bf x}}
\newcommand{\yy}{{\bf y}}
\DeclareMathOperator{\Int}{Int}
\DeclareMathOperator{\inv}{inv}
\DeclareMathOperator{\id}{id}
\DeclareMathOperator{\im}{im}
\DeclareMathOperator{\rank}{rank}
\newcommand{\HH}{\mathcal{H}}
\newcommand{\zz}{\mathcal Z}
\newcommand{\I}{\mathcal{I}}
\newcommand{\Z}{\mathbb Z}
\newcommand{\R}{\mathbb R}
\newcommand{\Q}{\mathbb Q}
\newcommand{\F}{\mathbb F}
\newcommand{\aaa}{\mathbf{a}}
\newcommand{\bbb}{\mathbf{b}}
\newcommand{\xxx}{\mathbf{x}}
\newcommand{\yyy}{\mathbf{y}}
\newcommand{\zzz}{\mathbf{z}}
\newcommand{\www}{\mathbf{w}}
\newcommand{\balpha}{\boldsymbol\alpha}
\newcommand{\bbeta}{\boldsymbol\beta}
\newcommand{\bgamma}{\boldsymbol\gamma}
\newcommand{\bdelta}{\boldsymbol\delta}
\newcommand{\cf}{\mathit{CF}}
\newcommand{\hf}{\mathit{HF}}
\newcommand{\cfd}{\mathit{CFD}}
\newcommand{\cfa}{\mathit{CFA}}
\newcommand{\cfk}{\mathit{CFK}}
\newcommand{\hfk}{\mathit{HFK}}
\newcommand{\kh}{\mathit{Kh}}
\newcommand{\sfh}{\mathit{SFH}}
\newcommand{\cfhat}{\widehat{\cf}}
\newcommand{\hfhat}{\widehat{\hf}}
\newcommand{\cfdhat}{\widehat{\cfd}}
\newcommand{\cfahat}{\widehat{\cfa}}
\newcommand{\cfkhat}{\widehat{\cfk}}
\newcommand{\hfkhat}{\widehat{\hfk}}
\newcommand{\cfkm}{\cfk^-}
\newcommand{\cfam}{\cfa^-}
\newcommand{\brho}{\boldsymbol\rho}
\begin{document}

\maketitle

\begin{abstract}
Bordered Heegaard Floer homology is an invariant for $3$-manifolds, which associates to a surface $F$ an algebra  $\az$, and to a $3$-manifold $Y$ with boundary, together with an orientation-preserving diffeomorphism $\phi: F\to \bdy Y$, a module over $\az$.
We study the Grothendieck group of modules over $\az$, and define an invariant lying in this group for every bordered 3-manifold $(Y, \bdy Y, \phi)$. We prove that this invariant  recovers the kernel of the inclusion $i_\ast:H_1(\bdy Y; \Z)\to H_1(Y; \Z)$ if $H_1(Y, \bdy Y; \Z)$ is finite, and is $0$ otherwise. We also study the properties of this invariant corresponding to gluing. As one application, we show that the pairing theorem for bordered Floer homology categorifies the classical Alexander polynomial formula for satellites. 
\end{abstract}

\section{Introduction}

Heegaard Floer homology is an approach, motivated by gauge theory, to studying knots, links, and 3- and 4-manifolds, developed by Ozsv\'ath and Szab\'o.  To a closed 3-manifold $Y$ one associates a graded chain complex $\cfhat(Y)$ whose chain homotopy type is a powerful homeomorphism invariant of the manifold. A knot $K$ in a 3-manifold $Y$ induces a filtration on the complex $\cfhat(Y)$, which in turn leads to knot Floer homology---a bigraded homology theory for knots. Amongst the many valuable properties of knot Floer homology, the simplest version, $\hfkhat(Y,K)$, categorifies the Alexander polynomial, detects the genus, and detects fiberedness. 

The ideas of Heegaard Floer homology were recently generalized by Lipshitz, Ozsv\'ath and Thurston to 3-manifolds with boundary \cite{bfh2}. The new theory, bordered Heegaard Floer homology, provides powerful gluing techniques for computing the original Heegaard Floer invariants of closed manifolds and knots. 

We explore the structural aspects of the bordered theory, developing the notion of an Euler characteristic for each of the two types of modules associated to a bordered manifold. The Euler characteristics of related Floer homologies have been shown to be invariants of $3$-manifolds. For example, for a closed manifold $Y$, the Euler characteristic of $\hf^+(Y)$ is the Turaev torsion of $Y$, and for a knot $K$ in $S^3$, the Euler characteristic of $\hfkhat(K)$ is the Alexander polynomial $\Delta_K(t)$. For sutured manifolds, Juh\'asz developed a Floer theory called sutured Floer homology \cite{sfh}, whose Euler characteristic has been shown to be a certain Turaev-type torsion function \cite{dsfh}. 
In this paper, we study the Euler characteristic of bordered Floer homology, its relation to the Euler characteristics of the aforementioned Floer theories, and its behavior under gluing. 

Bordered Floer homology is a TQFT-type generalization of $\hfhat$ to manifolds with boundary. To a parametrized surface one associates a differential graded algebra $\az$, where $\zz$ is a way to represent the surface, and to a manifold with parametrized boundary represented by $\zz$ a left type $D$ structure $\cfdhat$ over $\az$, or a right $\ainf$-module $\cfahat$ over $\az$.  Both structures are invariants of the manifold up to homotopy equivalence, and their derived tensor product is an invariant of the closed manifold obtained by gluing two bordered manifolds along their boundary, and recovers $\cfhat$. Another variant of these structures is associated to knots in bordered 3-manifolds, and recovers $\cfkhat$ after gluing.

We study the  \grg of the surface algebra $\az$, and prove that the image of the above structures in this group is an invariant of the bordered manifold. The difficulty in obtaining an interesting invariant lies in the fact that there is no differential $\Z$-grading on the algebra and modules. Instead, $\az$ is graded by a non-abelian group $G$, and the modules are graded by sets with a $G$-action. An Euler characteristic which carries no grading data loses too much information about the manifold, while one carrying the full data from $G$ would be harder to define, as well as to interpret and relate to its sisters in the closed and sutured worlds. 

To obtain an invariant with integer coefficients, we define a $\Z/2$ differential grading $m$ on the algebra and modules, and show that it agrees with the Maslov grading under gluing.

Suppose $M$ is a right $\ainf$-module over $\az$ with a set of  ``homogeneous" generators $\mathfrak S(M)$, i.e. for each generator $x$ there is a unique indecomposable idempotent $I(x)$ that acts non-trivially (by the identity) on that generator. In Section \ref{k0sec}, we define a correspondence between indecomposable idempotents of $\az$ and generators of $\Lambda^\ast(H_1(F; \Z))$, and a map $h:\mathfrak S(M)\to \Lambda^\ast(H_1(F; \Z))$ sending each generator $x$ to the generator of $\Lambda^\ast(H_1(F; \Z))$ corresponding to $I(x)$. In Section \ref{z2sec} we define a differential grading $m$ on the algebra $\az$ by $\Z/2$. In Section \ref{k0sec}, we prove the following:

\begin{theorem}\label{intro_k0}
Let $\zz$ be a pointed matched  circle with associated surface $F$ of genus $k$. The Grothendieck group of the category of $\Z/2$-graded $\az$-modules is given by
$$K_0(\az) \cong  \Lambda^\ast(H_1(F; \Z))\cong \Z^{2^{2k}}.$$ 
Moreover,  for an $\az$-module $M$ as above, its image in this group is given by
$$[M] = \sum _{x \in\mathfrak S(M)} (-1)^{m(x)}h(x),$$
where $m$ is the grading of $M$ by  $\Z/2$.
\end{theorem}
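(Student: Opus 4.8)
The plan is to split Theorem~\ref{intro_k0} into an abstract computation of the group $K_0(\az)$ and a separate verification of the formula for $[M]$, with the two tied together by a perfect Euler pairing against the finitely generated projective modules.

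For the abstract computation, the starting point is that $\az$ is a finite dimensional $\F_2$-algebra whose radical is the span of its non-idempotent basis elements, so that $\az/\operatorname{rad}(\az) \cong \I(\zz) \cong \prod_\sss \F_2$, the product running over the indecomposable idempotents $I(\sss)$; one checks this directly from the strands description of $\az$, since once a strand moves it keeps moving under multiplication, whence the non-idempotent part is a nilpotent two-sided ideal. It follows that the simple $\Z/2$-graded $\az$-modules are exactly the one-dimensional modules $S_\sss$ on which $I(\sss)$ acts by the identity, together with their shifts $S_\sss[1]$. I would then work in the triangulated derived category of bounded, finitely generated $\Z/2$-graded modules over $\az$ (equivalently of $\ainf$-modules, via the usual equivalence of derived categories), where every such object admits a finite filtration with subquotients among the $S_\sss$ and $S_\sss[1]$; hence $K_0(\az)$ is generated by the classes $[S_\sss]$, subject to $[S_\sss[1]] = -[S_\sss]$. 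To rule out further relations, introduce for each idempotent the projective $P_\sss$ generated by $I(\sss)$ and the pairing
\[
\bigl\langle\, [N],\,[P_\sss]\,\bigr\rangle \;=\; \chi\bigl(\Hom_{\az}(P_\sss, N)\bigr)\;=\;\chi\bigl(I(\sss)\cdot N\bigr),
\]
which descends to $K_0(\az)$ because $\Hom_{\az}(P_\sss, -)$ is exact and $\chi$ sends a shift to a sign; since $\bigl\langle [S_{\sss'}], [P_\sss]\bigr\rangle = \delta_{\sss\sss'}$, the classes $[S_\sss]$ form a free basis. The indecomposable idempotents of $\az$ are indexed by subsets of the $2k$ matched pairs of $\zz$, so $K_0(\az)\cong\Z^{2^{2k}}$, and post-composing with the correspondence of Section~\ref{k0sec} between indecomposable idempotents and exterior-algebra generators identifies $K_0(\az)$ with $\Lambda^\ast(H_1(F;\Z))$, sending $[S_\sss]$ to the generator corresponding to $I(\sss)$, so that $[S_{I(x)}] = h(x)$ for every homogeneous generator $x$.

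For the formula, let $M$ have homogeneous generators $\mathfrak S(M)$. Computing coordinates against the basis of projectives gives
\[
\bigl\langle\, [M],\,[P_\sss]\,\bigr\rangle \;=\; \chi\bigl(I(\sss)\cdot M\bigr)\;=\;\sum_{x\in\mathfrak S(M),\ I(x)=I(\sss)} (-1)^{m(x)},
\]
since $I(\sss)\cdot M$ is free over $\F_2$ on the generators $x$ with $I(x)=I(\sss)$, each in $\Z/2$-degree $m(x)$, and the Euler characteristic is insensitive to the internal differential and the higher $\ainf$-operations. Because the pairing with $\{[P_\sss]\}$ is unimodular, this determines
\[
[M] \;=\; \sum_\sss \Bigl(\,\sum_{x:\ I(x)=I(\sss)}(-1)^{m(x)}\Bigr)\,[S_\sss]\;=\;\sum_{x\in\mathfrak S(M)}(-1)^{m(x)}\,h(x).
\]
Equivalently, one can exhibit the radical filtration of $M$ directly and read $[M]$ off its associated graded $\bigoplus_{x\in\mathfrak S(M)} S_{I(x)}[m(x)]$.

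The step I expect to be the genuine obstacle is proving that $K_0(\az)$ carries no relations beyond $[S_\sss[1]]=-[S_\sss]$, i.e.\ that the differential on $\az$ and its grading by the non-abelian group $G$ (rather than by $\Z$) produce no hidden relations that would drop the rank below $2^{2k}$. This is exactly what the pairing with the projectives $P_\sss$ is designed to settle, so the real work is checking that this pairing is well defined on the relevant category of $\Z/2$-graded modules: that $\Hom_{\az}(P_\sss,-)$ (equivalently $-\otimes_{\az}P_\sss$ on the type~$D$ side) is exact, and that the grading $m$ is compatible with it so that a shift contributes precisely a factor $-1$ under $\chi$; given the homological nature of $m$ and the (semi-)projectivity of $P_\sss$ over the dg-algebra $\az$, this is routine but is the crux of the rank count. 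A secondary, more routine matter is the identification of $\operatorname{rad}(\az)$ and hence of the simple modules from the strands description, and the fact that bounded finitely generated $\ainf$-modules over $\az$ admit finite filtrations with simple subquotients in the derived category; both follow from finite dimensionality of $\az$ together with the standard equivalence between the derived categories of $\ainf$-modules and of dg-modules.
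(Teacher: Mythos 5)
The core issue is that you are computing the Grothendieck group $G_0$ of the derived category of \emph{all} bounded finitely generated $\Z/2$-graded modules, whereas the paper's $K_0(\az)$ (Definition \ref{k0def}) is the Grothendieck group of the category of \emph{compact projective} modules, equivalently of finitely generated bounded type $D$ structures. These are different categories, and for a general dg-algebra $K_0\neq G_0$. Your filtration-by-simples argument shows $G_0$ is generated by $\{[S_\sss]\}$, and your pairing $\langle[N],[P_\sss]\rangle = \chi(\Hom_\az(P_\sss,N))$ shows these classes form a free basis of $G_0$. But the simples $S_\sss$ are not projective over $\az$ (and are unlikely to have finite projective resolutions), so $[S_\sss]$ does not obviously live in the paper's $K_0(\az)$. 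Identifying your $G_0$ with the paper's $K_0$ requires knowing that the Cartan pairing $C_{\sss\ttt} = \langle[\az I(\ttt)],[P_\sss]\rangle = \chi(I(\sss)\az I(\ttt))$ has unit determinant. This is true — and it follows exactly from the nonnegative ``total support'' grading you observe when you identify $\operatorname{rad}(\az) = \az_+$ — but the step is not addressed, and it is precisely where the rank could drop.

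The paper avoids the $K_0$/$G_0$ distinction entirely by never leaving the projective world. It first shows, using the upper-triangular form of the differential matrix of a bounded type $D$ structure, that $[M] = \sum_{x\in\mathfrak S(M)}(-1)^{m(x)}[\az I_D(x)]$ directly in $K_0$, so $K_0(\az)$ is spanned by the elementary projectives. It then shows these are linearly independent by applying the augmentation $\az\twoheadrightarrow\mathcal I$ (the same structural fact you invoke) to get a map $K_0(\az)\to K_0(\mathcal I)\cong\Z^{2^{2k}}$ that sends $[\az I(\sss)]$ to the standard basis; since $K_0(\mathcal I)$ is manifestly free of the right rank, no Cartan-matrix computation is needed. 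Your formula for $[M]$ matches the paper's, but the two arguments are reading the same generator count off against different dual bases (simples paired with projectives on your side, coefficients of projectives read off by the augmentation on the paper's). With the Cartan unimodularity supplied, your proposal becomes a valid alternate proof; as written, the unexamined jump from $G_0$ to $K_0$ is a genuine gap.
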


In other words, the Euler characteristic of an $\az$-module counts generators according to their grading, and the algebra action on them. 

To formulate the behavior of the Euler characteristic under gluing, note that the  tensor product $M\boxtimes N$ is just a chain complex, and so its Euler characteristic is an integer. Thus, gluing should correspond to pairing $[M]$ and $[N]$ in some way and interpreting the result as an integer.  Specifically, for $a, b \in K_0(\az)$, we define a product $a\cdot b\in \Z$ in Section \ref{tensorsec}.

\begin{theorem}\label{intro_tensor}
Let $M$ be a right $\cala_\infty$-module over $\az$ and $N$ a left type $D$ structure  over $\az$. The Euler characteristic of the chain complex $M\boxtimes N$ is  $\chi(M\boxtimes N) = [M]\cdot[N]$. In particular, let $Y_1$ and $Y_2$ be bordered $3$-manifolds which agree along their boundary, with Heegaard diagrams $\mathcal H_1$ and $\mathcal H_2$, so that $\mathcal H = \mathcal H_1\cup_\bdy \mathcal H_2$ represents the closed $3$-manifold $Y = Y_1\cup_\bdy Y_2$. Fix  $\mathfrak s_i \in \mathrm{spin}^c(Y_i)$. Then
 up to an overall sign
$$[\cfahat(\mathcal H_1, \mathfrak s_1)] \cdot [ \cfdhat(\mathcal H_2, \mathfrak s_2)] = \chi\left(\bigoplus_{\substack{\mathfrak s\in \mathrm{spin}^c(Y)   \\ \mathfrak s|{Y_i} = \mathfrak s_i, i = 1,2 }}\cfhat(\HH, \mathfrak s)\right).$$
\end{theorem}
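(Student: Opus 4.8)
The plan is to prove the algebraic identity $\chi(M\boxtimes N)=[M]\cdot[N]$ first, in isolation, and then obtain the topological statement by feeding it into the pairing theorem of \cite{bfh2} together with the comparison of the grading $m$ with the Maslov grading established in Section~\ref{z2sec}. For the algebraic part, I would begin by recalling the structure of the box tensor product: arranging boundedness in the usual way (possible since one of $\HH_1,\HH_2$ can be taken admissible), $M\boxtimes N$ is a finite-dimensional $\F_2$-chain complex whose natural basis is
$$\{\, x\otimes y \;:\; x\in\mathfrak S(M),\ y\in\mathfrak S(N),\ I(x)=I(y)\,\},$$
where $I(y)$ denotes the primary idempotent of the type $D$ generator $y$; a summand $x\otimes y$ is nonzero precisely when these idempotents agree. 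I would then invoke Section~\ref{z2sec} to record that this complex is $\Z/2$-graded by $m$ and that $m(x\otimes y)=m(x)+m(y)\bmod 2$. This additivity of $m$ under $\boxtimes$ is the manifestation on tensor products of $m$ being a \emph{differential} $\Z/2$-grading: it is exactly what makes the arity-dependence of the $\ainf$-operations disappear mod $2$, so that $M\boxtimes N$ genuinely is a $\Z/2$-complex.

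Next I would compute both sides and match them term by term. From the above,
$$\chi(M\boxtimes N)=\sum_{\substack{x\in\mathfrak S(M),\,y\in\mathfrak S(N)\\ I(x)=I(y)}}(-1)^{m(x)+m(y)},$$
while by Theorem~\ref{intro_k0} (and the analogous description of $[N]$) together with bilinearity of the product defined in Section~\ref{tensorsec},
$$[M]\cdot[N]=\Big(\sum_{x}(-1)^{m(x)}h(x)\Big)\cdot\Big(\sum_{y}(-1)^{m(y)}h(y)\Big)=\sum_{x,y}(-1)^{m(x)+m(y)}\,h(x)\cdot h(y).$$
It then remains only to observe that, on the basis of $K_0(\az)\cong\Lambda^\ast(H_1(F;\Z))$ indexed by the indecomposable idempotents, $h(x)\cdot h(y)$ equals $1$ if $I(x)=I(y)$ and $0$ otherwise. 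This is precisely how the product of Section~\ref{tensorsec} is set up — it is the pairing dual to the idempotent basis — and it was chosen exactly to mirror the vanishing/non-vanishing of $x\otimes y$ in $M\boxtimes N$; comparing the two displayed sums gives $\chi(M\boxtimes N)=[M]\cdot[N]$.

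For the topological consequence, let $\HH=\HH_1\cup_\partial\HH_2$. The pairing theorem of \cite{bfh2} provides a homotopy equivalence of $\Z/2$-graded complexes
$$\cfahat(\HH_1,\mathfrak s_1)\boxtimes\cfdhat(\HH_2,\mathfrak s_2)\;\simeq\;\bigoplus_{\substack{\mathfrak s\in\spinc(Y)\\ \mathfrak s|_{Y_i}=\mathfrak s_i,\ i=1,2}}\cfhat(\HH,\mathfrak s),$$
and by the main result of Section~\ref{z2sec} the grading $m$ on the left agrees, up to a global shift, with the $\Z/2$ reduction of the Maslov grading on the right. Taking Euler characteristics of both sides and applying the algebraic identity with $M=\cfahat(\HH_1,\mathfrak s_1)$ and $N=\cfdhat(\HH_2,\mathfrak s_2)$ yields the displayed formula, the residual ambiguity between the absolute $m$- and Maslov gradings accounting for the ``overall sign.''

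The real content lies in the structural input for Step~1: one must know that $m$ is a bona fide $\Z/2$ differential grading, that it behaves additively under $\boxtimes$, and that it reduces the Maslov grading after gluing — all of which is the business of Section~\ref{z2sec}. Granting that, the present theorem is essentially a bookkeeping argument, the only delicacy being to pin down the sign conventions so that the algebraic identity $\chi(M\boxtimes N)=[M]\cdot[N]$ holds on the nose while the topological statement is asserted only up to an overall sign.
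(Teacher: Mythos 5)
Your proof takes essentially the same route as the paper's: compute $[M]\cdot[N]$ by bilinearity of the pairing, observe that $h(x)\cdot h(y)$ is the indicator of matching idempotents (which is precisely the nonvanishing condition for $x\otimes y$ in $M\boxtimes N$), equate with $\chi(M\boxtimes N)$ under the $\Z/2$-grading $m(x\otimes y)=m(x)+m(y)$, and then feed this into the pairing theorem together with Proposition~\ref{maslov} to get the topological statement up to an overall sign. One minor quibble: your aside that additivity of $m$ under $\boxtimes$ is ``what makes the arity-dependence of the $\ainf$-operations disappear mod $2$'' is off the mark --- the degree shifts from $m_{k+1}$ and $\delta_k$ already cancel exactly (not merely mod $2$) in the box tensor product, so the additivity of $m$ is just the image of the usual $G$-set grading under the homomorphism $f$ and involves no mod-$2$ coincidence --- but this side remark does not affect the argument.
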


A similar statement can be made when one of the bordered manifolds is endowed with a knot. For that purpose, we define a second (internal) grading which behaves much like the Alexander grading for knots in closed manifolds, and study the Euler characteristic in $\Lambda^\ast(H_1(F; \Z))\otimes \Z[t, t^{-1}]$, where $t$ corresponds to the internal grading. One topological significance of this new invariant is that it recovers the Alexander polynomial.

Recall that given knots $K\hookrightarrow S^3$ and  $C\hookrightarrow S^1\times D^2$, we can glue $S^1\times D^2$ to $S^3\setminus \nu(K)$ by identifying the $0$-framings. The image of $C$ after this gluing is a \emph{satellite knot} denoted $K_C$. Let $k = \#(C\cap D^2)$ be the homology class of $C$ inside $S^1\times D^2$. It is a classical result that $\Delta_{U_C}(t)\cdot \Delta_K(t^k) = \Delta_{K_C}(t)$, where $U\hookrightarrow S^3$ is the unknot. The pairing on bordered Floer homology categorifies this formula.

To state the result, for a bigraded module $M$ we denote by  $[M, k]$  the Euler characteristic of $M$ where $t$ is replaced by $t^k$.  
Also note that the pairing in Section \ref{tensorsec} has a bigraded version
$$\cdot:\Lambda^\ast H_1(F; \Z)\otimes \Z[t, t^{-1}]\times \Lambda^\ast H_1(F; \Z)\otimes \Z[t, t^{-1}]\to \Z[t, t^{-1}].$$

\begin{theorem}\label{intro_alex}
Given  oriented knots $K$ in $S^3$ and  $C$ in a $0$-framed $S^1\times D^2$, let $k = \#(C\cap D^2)$ be the homology class of $C$ inside $S^1\times D^2$. Let $(S^3\setminus K, 0)$ be the $0$-framed complement of $K$, and let $a_1$ and $a_2$ be  the generators of $H_1(T^2; \Z)$ corresponding to the $0$-framing and the $\infty$-framing. Then 
$$[\cfahat(S^1\times D^2, C)]\cdot [\cfdhat(S^3\setminus K, 0),k] = \chi (\cfkhat(K_C)).$$
Moreover, 
$$[\cfahat(S^1\times D^2, C)] =  \chi(\cfkhat(U_C)) a_1  + P_C(t) a_2=  \Delta_{U_C}(t)a_1 +  P_C(t) a_2,$$
and
$$[\cfdhat(S^3\setminus K, 0)] =  \chi(\cfkhat(K)) a_1  =  \Delta_K(t) a_1.$$
In other words, the decategorification of bordered Floer homology in this case is precisely the classical   Alexander polynomial formula for satellites
$$\Delta_{U_C}(t)\cdot \Delta_K(t^k) = \Delta_{K_C}(t).$$
\end{theorem}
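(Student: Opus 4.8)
The plan is to deduce the whole statement from three inputs: the bigraded version of the pairing theorem (Theorem~\ref{intro_tensor}), the knot version of the Lipshitz--Ozsv\'ath--Thurston pairing theorem, and two explicit evaluations of the brackets $[\cfahat]$ and $[\cfdhat]$ by means of Theorem~\ref{intro_k0}. Recall that, writing $S^3 = (S^3\setminus\nu(K))\cup_{T^2}(S^1\times D^2)$ with the $0$-framings identified, the satellite $K_C$ is the image of $C$, and the bordered pairing theorem for knots gives a filtered homotopy equivalence $\cfahat(S^1\times D^2, C)\boxtimes\cfdhat(S^3\setminus K, 0)\simeq\cfkhat(K_C)$. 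Taking bigraded Euler characteristics of both sides, the right side is $\chi(\cfkhat(K_C))$, which by Ozsv\'ath--Szab\'o equals $\Delta_{K_C}(t)$, while the left side is $[\cfahat(S^1\times D^2, C)]\cdot[\cfdhat(S^3\setminus K, 0)]$ by the bigraded Theorem~\ref{intro_tensor}, \emph{except} that the internal grading on the type $D$ factor enters the Alexander grading of $\cfkhat(K_C)$ multiplied by the winding number $k = \#(C\cap D^2)$, so the $t$-variable on that factor is replaced by $t^k$. This gives exactly the first displayed equation, $[\cfahat(S^1\times D^2, C)]\cdot[\cfdhat(S^3\setminus K, 0), k] = \chi(\cfkhat(K_C))$, and it remains to evaluate the two brackets and assemble.

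First I would compute $[\cfdhat(S^3\setminus K, 0)]$. The boundary is $T^2$ and the module lives over the middle summand $\mathcal A(\zz, 0)$, whose two indecomposable idempotents $\iota_0, \iota_1$ correspond, under the map of Section~\ref{k0sec}, to the $1$-dimensional exterior powers $a_1$ (the $0$-framing, i.e.\ the Seifert longitude) and $a_2$ (the meridian). The generators of $\cfdhat(S^3\setminus K, 0)$ carrying $\iota_0$ are in grading-preserving bijection with the generators of $\cfkhat(K)$, while those carrying $\iota_1$ lie in the ``vertical'' and ``horizontal'' stable chains and cancel in the signed count once the $\Z/2$-grading $m$ of Section~\ref{z2sec} is used --- which is forced in any case by the main result of the paper, since $H_1(S^3\setminus K, T^2;\Z) = 0$ implies $[\cfdhat(S^3\setminus K, 0)]$ lies in $\ker(i_\ast\colon H_1(T^2;\Z)\to H_1(S^3\setminus K;\Z)) = \langle a_1\rangle$. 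Using that $m$ reduces to the Maslov grading mod $2$ and that the internal grading restricts to the Alexander grading of $K$ on the $\iota_0$-part, Theorem~\ref{intro_k0} then yields $[\cfdhat(S^3\setminus K, 0)] = \bigl(\sum_x(-1)^{M(x)}t^{A(x)}\bigr)\,a_1 = \chi(\cfkhat(K))\,a_1 = \Delta_K(t)\,a_1$; in particular $[\cfdhat(S^3\setminus U, 0)] = a_1$.

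Next, for $[\cfahat(S^1\times D^2, C)]$, Theorem~\ref{intro_k0} again forces the class into the span of $a_1$ and $a_2$, say $[\cfahat(S^1\times D^2, C)] = \alpha(t)\,a_1 + \beta(t)\,a_2$. Applying the first equation with $K$ the unknot $U$ (so $K_C = U_C$ and $\Delta_U(t^k) = 1$), together with $[\cfdhat(S^3\setminus U, 0), k] = a_1$ from the previous step, the pairing gives $\alpha(t)\,(a_1\cdot a_1) = \chi(\cfkhat(U_C)) = \Delta_{U_C}(t)$; since the pairing of Section~\ref{tensorsec} is diagonal on the middle summand ($a_1\cdot a_1 = \pm 1$, $a_1\cdot a_2 = 0$), we get $\alpha(t) = \Delta_{U_C}(t)$ up to sign, and we set $P_C(t) := \beta(t)$. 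Assembling, for general $K$ the first equation and the two computed brackets give
\[
\chi(\cfkhat(K_C)) = \bigl(\Delta_{U_C}(t)\,a_1 + P_C(t)\,a_2\bigr)\cdot\bigl(\Delta_K(t^k)\,a_1\bigr) = \pm\,\Delta_{U_C}(t)\,\Delta_K(t^k),
\]
and, since the left side is $\Delta_{K_C}(t)$ by Ozsv\'ath--Szab\'o, we obtain $\Delta_{K_C}(t) = \pm\,\Delta_{U_C}(t)\,\Delta_K(t^k)$, which (the sign being fixed by the usual symmetric normalizations) is the classical satellite formula.

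The hard part will be the grading bookkeeping used in the first paragraph: showing that the internal grading on $\cfdhat(S^3\setminus K, 0)$ contributes to the Alexander grading of $\cfkhat(K_C)$ with the factor $k$ rather than $1$. This is the geometric input behind the substitution $t\mapsto t^k$, and it is the very phenomenon producing $\Delta_K(t^k)$ classically: because $[C] = k\,[S^1\times\{\mathrm{pt}\}]$ in $H_1(S^1\times D^2;\Z)$, a Seifert surface for $K_C$ meets the companion solid torus $\nu(K)$ in $k$ meridian disks, so the relative $\spinc$ datum carried by a type $D$ generator is counted $k$ times when one reads off the Alexander grading of $K_C$. Making this precise requires tracking relative $\spinc$ structures (equivalently, periodic domains and their algebraic intersections with the basepoints) through the box tensor product and identifying the winding number $k$ as the relevant intersection number; this is the only place where the topology of the satellite operation, rather than the formal structure of the bordered invariants, enters. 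The two remaining ingredients --- that $m$ agrees with the Maslov grading mod $2$ under gluing and that the internal grading on $\cfdhat(S^3\setminus K, 0)$ restricts to the Alexander grading of $K$ --- should be routine consequences of the constructions in Sections~\ref{z2sec} and~\ref{k0sec}.
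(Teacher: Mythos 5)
Your overall structure matches the paper's: use the bigraded pairing theorem for the first displayed equation, identify the $a_1$-coefficients by gluing unknot diagrams, and then assemble the satellite formula. But there is a genuine gap in the computation of $[\cfdhat(S^3\setminus K, 0)]$, and it is precisely the step where the real work of the theorem lives.

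You assert that the $\iota_1$-generators ``cancel in the signed count'' and then add that this is ``forced in any case by the main result of the paper'', citing Theorem~\ref{cfdker} and the fact that $\ker(i_\ast\colon H_1(T^2)\to H_1(S^3\setminus K)) = \langle a_1\rangle$. This does not work. Theorem~\ref{cfdker} concerns the $\Z/2$-graded Grothendieck class, i.e.\ the image in $\Lambda^\ast H_1(F;\Z)$ \emph{without} the Alexander grading; equivalently, it says something about the specialization $t = 1$ of the bigraded invariant. What you need here is that the $a_2$-coefficient of $[\cfdhat(S^3\setminus K, 0)]$ vanishes as a Laurent polynomial in $t$, not merely after setting $t = 1$. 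A priori the $\iota_1$-generators could contribute a nonzero polynomial $Q(t)$ with $Q(1) = 0$, and nothing in the $H_1$-level argument rules this out. This is exactly the content the paper proves directly: it lays out the vertical and horizontal chains of $\cfdhat(S^3\setminus K, 0)$ on the $(U,A)$-lattice (cases depending on $\tau(K)$, the unstable chain drawn as an $L$), rotates by $45^\circ$, smooths the resulting graph into a union of immersed circles, and argues that on each half-integer level line $L_t$ the signed count of $\iota_1$-generators is an algebraic intersection number of $L_t$ with immersed circles and therefore zero. It then removes the extra hypothesis of a simultaneously simplified basis by a symmetry argument with $\cfk^\infty$. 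None of that is ``forced'' by Theorem~\ref{cfdker}.

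Two smaller comments. First, the ingredient you flag as the ``hard part'' --- that the internal grading of the type $D$ factor enters the Alexander grading of $K_C$ with a factor of $k$ --- is already carried out in the paper as Proposition~\ref{alex_tensor}; it is a clean computation with the explicit grading group $G\times\Z$, not the bottleneck. Second, your route to $\alpha(t)$ (pair with $\cfdhat(S^3\setminus U, 0)$ and use $[\cfdhat(S^3\setminus U,0)] = a_1$) is fine here because the unknot diagram has a single generator, so no cancellation is needed; but it is worth noting that the paper instead pairs $\cfahat$ of the unknot-in-solid-torus with $\cfdhat(S^3\setminus K,0)$ to read off the $a_1$-coefficient of $[\cfdhat]$ directly, and pairs $\cfdhat$ of the unknot complement with $\cfahat(S^1\times D^2, C)$ for the other side, so no reliance on the $a_2$-vanishing is needed for those two evaluations. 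The $a_2$-vanishing of $[\cfdhat(S^3\setminus K,0)]$ stands alone as the essential step, and your proposal does not prove it.
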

We would like to study the polynomial $P_C(t)$ further to see what additional information one might gain about the satellite $C$.

Our last main result discusses the topological data that $[\cfdhat(Y)]$ encodes.

Work of Zarev \cite{bs} implies that the Euler characteristic of bordered Floer homology recovers the Euler characteristic of sutured Floer homology for the same manifold with boundary with properly chosen sutures. The Euler characteristic of sutured Floer homology has some interesting topological properties \cite{dsfh}.  We  hope that since bordered Floer homology also  satisfies a nice gluing formula, we can find new topological information in its Euler characteristic. 

 As a first step in that direction, we make a statement about $[\cfd]$ without an additional Alexander grading. Below, all homology coefficients are in $\Z$. Looking into this was suggested to the author by Peter Ozsv\'ath, as a possible one-boundary-component Heegaard Floer analogue to Donadson's formulas from \cite{sd}.
 
\begin{theorem}\label{cfdker}
Let $(Y, \zz, \phi)$ be a bordered $3$-manifold, with boundary of some genus $k$. 
 If $b_1(Y, \bdy Y) >0$, then $[\cfdhat(Y)] = 0$. If $b_1(Y, \bdy Y) = 0$, then 
  $$\mathrm{span} [\cfdhat(Y)] = |H_1(Y, \bdy Y)| \Lambda^k \ker(i_{\ast}:H_1(F(\zz))\to H_1(Y)).$$
\end{theorem}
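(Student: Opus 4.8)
The plan is to compute $[\cfdhat(Y)]$ via Theorem \ref{intro_k0}, reducing everything to a bookkeeping problem about the idempotents occurring in a bordered Heegaard diagram for $Y$, and then to identify that bookkeeping with homological data. First I would fix a bordered Heegaard diagram $\mathcal H$ for $(Y,\zz,\phi)$ and recall that the generators $\mathfrak S(\cfdhat(\mathcal H))$ are in bijection with certain tuples of intersection points; by Theorem \ref{intro_k0}, $[\cfdhat(Y)] = \sum_{x}(-1)^{m(x)} h(x)$, where $h(x)\in\Lambda^\ast(H_1(F;\Z))$ is the exterior-algebra generator attached to the idempotent $I(x)$. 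The key observation is that, after summing over a spin$^c$-class (or over all of them, since we are not keeping an Alexander grading here), the contribution of the generators lying over a fixed idempotent $\mathbf s$ is, up to sign, the Euler characteristic of a sub-chain-complex of $\cfhat$ of some glued manifold — and these Euler characteristics are governed by torsion. Concretely, I would compare $[\cfdhat(Y)]$ to known computations: pairing $\cfdhat(Y)$ with $\cfahat$ of a standard handlebody (Theorem \ref{intro_tensor}) produces $\chi(\cfhat)$ of a closed manifold, which is $\pm|H_1|$ when $H_1$ is finite and $0$ when $b_1>0$. Running this pairing against enough different handlebodies (one for each way of filling the boundary surface) pins down $[\cfdhat(Y)]$ as an element of $\Lambda^\ast(H_1(F;\Z))$.

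For the vanishing statement, when $b_1(Y,\bdy Y)>0$ I would argue that gluing any handlebody $H$ to $Y$ yields a closed $3$-manifold $Y'$ with $b_1(Y')>0$ (since a class in $H_1(Y,\bdy Y)$ of infinite order survives), hence $\chi(\cfhat(Y'))=0$; by Theorem \ref{intro_tensor} this says $[\cfdhat(Y)]\cdot[\cfahat(H)]=0$ for every handlebody $H$, and since the classes $[\cfahat(H)]$ span enough of $K_0(\az)$ under the pairing, $[\cfdhat(Y)]=0$. The main work is the non-degeneracy of the pairing restricted to the subspace of $K_0(\az)$ spanned by handlebody invariants, which should follow from the explicit form of the pairing in Section \ref{tensorsec} together with the classification of handlebody type-$A$ modules.

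For the case $b_1(Y,\bdy Y)=0$, I would first show that $[\cfdhat(Y)]$ is supported on (i.e.\ a multiple of a single wedge monomial spanning) $\Lambda^k\ker(i_\ast\colon H_1(F)\to H_1(Y))$: geometrically, a generator $x$ of $\cfdhat(\mathcal H)$ occupies a set of $\alpha$-arcs whose complementary arcs must, for the corresponding domain count to be nonempty after filling, represent a subsurface bounding in $Y$ — so the idempotent-to-exterior-algebra correspondence forces $h(x)$ to land in $\Lambda^k$ of the kernel sublattice. (When $b_1(Y,\bdy Y)=0$ the kernel of $i_\ast$ is a rank-$k$ Lagrangian-type sublattice of $H_1(F)\cong\Z^{2k}$, by the long exact sequence of the pair and half-lives-half-dies, so $\Lambda^k$ of it is rank one.) It then remains to compute the single integer coefficient. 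For that I would glue a handlebody $H$ realizing the \emph{complementary} Lagrangian, so that $Y\cup_\bdy H$ is a rational homology sphere with $|H_1(Y\cup_\bdy H)| = |H_1(Y,\bdy Y)|$ (by a Mayer–Vietoris computation), and invoke $\chi(\cfhat(Y\cup_\bdy H)) = \pm|H_1(Y\cup_\bdy H)|$ together with Theorem \ref{intro_tensor}; the pairing of $[\cfdhat(Y)]$ (an unknown multiple of the kernel monomial) with $[\cfahat(H)]$ (the complementary monomial) reads off exactly that multiple, giving $\mathrm{span}[\cfdhat(Y)] = |H_1(Y,\bdy Y)|\,\Lambda^k\ker(i_\ast)$.

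The main obstacle I anticipate is the second step of the rational-homology case: showing cleanly that $[\cfdhat(Y)]$ is supported \emph{only} on $\Lambda^k\ker(i_\ast)$ and nowhere else in $\Lambda^\ast(H_1(F))$. A priori the sum over generators could spread across many exterior-algebra monomials with cancellations; ruling this out requires either a direct argument that every generator of a genuine bordered diagram sits over the ``kernel'' idempotent type (a statement about which $\alpha$-arc subsets can be completed to a generator, controlled by $H_1$ of the diagram), or a more robust argument pairing $[\cfdhat(Y)]$ against \emph{all} handlebody invariants and using non-degeneracy of the pairing to locate its support. I would pursue the homological/diagrammatic route first, since it also transparently yields the genus-$k$ top-exterior-power form, and fall back on the pairing-duality argument if the diagrammatics get unwieldy. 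Signs are ignored throughout, consistent with the statement.
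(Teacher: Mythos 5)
Your overall strategy---express $[\cfdhat(Y)]$ via Theorem~\ref{intro_k0} and then pin it down by pairing against handlebodies via Theorem~\ref{intro_tensor}---is genuinely different from what the paper does, but it has two real gaps that the paper's argument avoids entirely.

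First, the assertion that the handlebody classes $[\cfahat(H)]$ ``span enough of $K_0(\az)$ under the pairing'' does not hold as stated. Each handlebody filling contributes (by the theorem itself, applied to $H$) a multiple of $\Lambda^k L$ for a Lagrangian sublattice $L\subset H_1(F;\Z)$. But the set of such decomposable Lagrangian wedges spans only a \emph{proper} subspace of $\Lambda^k H_1(F;\Z)$ once $k\geq 2$: under the Pl\"ucker embedding, the Lagrangian Grassmannian lies in the hyperplane annihilated by the symplectic form, so an element of $\Lambda^k H_1(F;\Z)$ in the ``symplectic direction'' pairs to zero against every $\Lambda^k L$ without being zero. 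To run your argument you would have to first establish, independently, that $[\cfdhat(Y)]$ itself always lies in the span of Lagrangian wedges, which is circular with the second half of the theorem.

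Second, your proposed ``diagrammatic route'' to localize $[\cfdhat(Y)]$ onto $\Lambda^k\ker(i_\ast)$ misstates what happens geometrically. It is not true that every generator of a bordered Heegaard diagram must occupy a set of $\alpha$-arcs corresponding to the kernel; generators occupy arbitrary idempotents, and the concentration on $\Lambda^k\ker(i_\ast)$ is a consequence of \emph{signed cancellation}, not of which idempotents actually appear. The paper's proof makes this precise: it writes the coefficient of each $a_{\sss}$ as the determinant of a submatrix $M(\mathcal H)_{\sss}$ of the signed $\alpha$--$\beta$ intersection matrix, observes that when $b_1(Y,\bdy Y)=0$ a change of basis puts $M'(\mathcal H)$ in block form $\left(\begin{smallmatrix}0&B\\A&C\end{smallmatrix}\right)$ with $\det B=|H_1(Y,\bdy Y)|$ and the columns of $A$ spanning $\ker(i_\ast)$, and then the factorization $\det M(\mathcal H)_{\sss}=\det A_{\sss}\det B$ produces both the kernel wedge and the $|H_1(Y,\bdy Y)|$ coefficient simultaneously. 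When $b_1(Y,\bdy Y)>0$ the top block fails to have full rank (Lemma~\ref{rank}), so every such determinant vanishes---again a cancellation phenomenon invisible at the level of individual generators. So while your high-level reading of the theorem is correct, replacing the paper's explicit determinant/handle-slide computation with the pairing-against-handlebodies argument requires substantially more work than the proposal acknowledges, and the geometric heuristic you offer as the ``first'' route is not correct as stated.
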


Note that $[\cfdhat(Y)]$ is an element of $\Lambda^k(H_1(F(\zz)))$, and $\mathrm{span} [\cfdhat(Y)]$ is the $\Z$ span of this element.

It would be interesting to explore further the additional structure of the Grothendieck group of a surface algebra, for example, the structure arising from the action of the mapping class group, or from other types of surface cobordisms. Similar to  the surgery formulae for Turaev torsion, one should be able to  obtain new formulae when the gluing is along any genus surface.

\ackn I am  deeply thankful to Robert Lipshitz for suggesting this problem and for his close guidance throughout. I am grateful to Peter Ozsv\'ath for  numerous enlightening conversations, and for pointing my attention to what became Theorem \ref{cfdker}. I thank Alexander Ellis, Dylan Thurston and Rumen Zarev for helpful discussions,  Sucharit Sarkar for his comments on an earlier version of this paper, and the referee for many useful suggestions and corrections.

\section{Background in bordered Floer homology}

This section is a brief introduction to bordered Floer homology.

\subsection{The algebra}
In this section, we describe the differential graded algebra $\az$ associated to the parametrized boundary of a $3$-manifold. For further details, see  \cite[Chapter 3]{bfh2}.

\begin{defn}\label{alg}
The \emph{strands algebra} $\cala(n,k)$ is a free $\Z/2$-module generated by partial permutations $a = (S, T, \phi)$, where $S$ and $T$ are $k$-element subsets of the set $[n]:= \{1, \ldots, n\}$ and $\phi:S\to T$ is a non-decreasing bijection. We let $\inv(a) = \inv(\phi)$ be the number of inversions of $\phi$, i.e. the number of pairs $i,j\in S$ with $i<j$ and $\phi(j)<\phi(i)$. Multiplication is given by 
\begin{displaymath}
(S, T, \phi)\cdot(U, V, \psi) = \left\{ \begin{array}{ll}
(S, V, \psi\circ\phi) & \textrm{if $T=U$, $\inv(\phi)+ \inv(\psi) = \inv(\psi\circ\phi)$}\\
0 & \textrm{otherwise.}
\end{array} \right.
\end{displaymath}
See  \cite[Section 3.1.1]{bfh2}.
We can represent a generator $(S, T, \phi)$ by a strands diagram of horizontal and upward-veering strands. See \cite[Section 3.1.2]{bfh2}.
The differential of $(S, T, \phi)$ is the sum of all possible ways to ``resolve" an inversion of $\phi$ so that $\inv$ goes down by exactly $1$. Resolving an inversion $(i,j)$ means switching $\phi(i)$ and $\phi(j)$, which graphically can be seen as smoothing a crossing in the strands diagram.
\end{defn}

The ring of idempotents $\mathcal I(n,k)\subset \cala(n,k)$ is generated by all elements of the form $I(S) := (S, S, \textrm{id}_S)$ where $S$ is a $k$-element subset of $[n]$. 
 
\begin{defn} A \emph{pointed matched circle} is a quadruple $\zz = (Z, {\bf a}, M, z)$ consisting of an oriented circle $Z$, a collection of $4k$ points ${\bf a} = \{a_1, \ldots, a_{4k}\}$ in $Z$, a \emph{matching} of ${\bf a}$, i.e., a $2$-to-$1$ function $M:{\bf a}\to [2k]$, and a basepoint $z\in Z\setminus {\bf a}$. We require that performing oriented surgery along the $2k$ $0$-spheres $M^{-1}(i)$ yields a single circle. 
\end{defn}

A matched circle specifies a handle decomposition of an oriented surface $F(\zz)$ of genus $k$: take a $2$-dimensional $0$-handle with boundary $Z$, $2k$ oriented $1$-handles attached along the pairs of matched points, and a $2$-handle attached to the resulting boundary. 

If we forget the matching on the circle for a moment, we can view $\cala(4k) = \bigoplus_{i}\cala(4k, i)$ as the algebra generated by the Reeb chords in $(Z\setminus z, {\bf a})$: We can view a set ${\boldsymbol{\rho}}$ of Reeb chords, no two of which share initial or final endpoints, as a strands diagram of upward-veering strands. For such a set ${\boldsymbol{\rho}}$, we define the \emph{strands algebra element associated to ${\boldsymbol{\rho}}$} to be the sum of all ways of consistently adding horizontal strands to the diagram for ${\boldsymbol{\rho}}$, and we denote this element by $a_0({\boldsymbol{\rho}})\in \cala(4k)$. The basis over $\Z/2$ from Definition \ref{alg} is in this terminology the non-zero elements of the form $I(S)a_0({\boldsymbol{\rho}})$, where $S\subset \bf a$.

For a subset ${\bf s}$ of $[2k]$, a \emph{section} of ${\bf s}$ is a set  $S\subset M^{-1}({\bf s})$, such that $M$ maps $S$ bijectively to ${\bf s}$. To each ${\bf s}\subset [2k]$ we associate an idempotent in $\cala (4k)$ given by
$$I({\bf s}) = \sum_{S \textrm{ is a section of } {\bf s}} I(S).$$
Let $\mathcal I(\zz)$ be the subalgebra generated by all $I({\bf s})$, and let ${\bf I} = \sum_{\bf s}I({\bf s})$.
\begin{defn}
The \emph{algebra $\az$ associated to a pointed matched circle $\zz$} is the subalgebra of $\cala(4k)$ generated (as an algebra) by $\mathcal I(\zz)$ and by all $a({\boldsymbol{\rho}}) :={\bf I}a_0({\boldsymbol{\rho}})\bf{ I}$. We refer to $a({\boldsymbol{\rho}})$ as the \emph{algebra element associated to ${\boldsymbol{\rho}}$}.
\end{defn}
We will view $\cala(\zz)$ as defined over the ground ring $\mathcal I(\zz)$. Note that $\cala(\zz)$ decomposes as a direct sum of differential graded algebras
$$\cala(\zz) = \bigoplus_{i = -k}^k\cala (\zz, i),$$
where $\cala (\zz, i) = \cala(\zz)\cap \cala(4k, k+i)$. Similarly, set $\mathcal I(\zz, i) = \mathcal I(\zz)\cap \cala(4k, k+i)$.

\subsection{Type $D$ structures, $\cala_\infty$-modules, and tensor products}

We recall the definitions of the algebraic structures used in \cite{bfh2}. For a beautiful, terse description of type $D$ structures and their basic properties, see \cite[Section 7.2]{bs}, and for a more general and detailed description of $\cala_\infty$ structures,  see \cite[Section 2.3]{bfh2}. 

Let $A$ be a unital differential graded algebra with differential $d$ and multiplication $\mu$ over a base ring ${\bf k}$. In this paper, ${\bf k}$ will always be a direct sum of copies of $\F_2 = \Z/2\Z$. When the algebra is $\cala(\mathcal Z)$, the base ring for all modules and tensor products is $\mathcal I(\mathcal Z)$.

A \emph{(right) $\cala_\infty$-module} over $A$ is a graded module $M$ over ${\bf k}$, equipped with maps 
$$m_i: M\otimes A^{\otimes (i-1)}\to M[2-i],$$ 
satisfying the compatibility conditions
\begin{align*}
0&= \sum_{i+j = n+1}m_i(m_j(\xxx, a_1, \cdots, a_{j-1}), \cdots , a_{n-1})\\
&+ \sum_{i=1}^{n-1} m_n(\xxx, a_1,\cdots, a_{i-1}, d(a_i),\cdots, a_{n-1})\\
&+ \sum_{i=1}^{n-2} m_{n-1}(\xxx, a_1,\cdots, a_{i-1}, (\mu(a_i,a_{i+1})),\cdots, a_{n-1})
\end{align*}
and the unitality conditions $m_2(\xxx, 1) = \xxx$ and $m_i(\xxx, a_1,\cdots, a_{i-1})=0$ if $i>2$ and some $a_j =1$. We say that $M$ is \emph{bounded} if  $m_i=0$ for all sufficiently large $i$.

A \emph{(left) type $D$ structure}  over $A$ is a graded module $N$ over the base ring, equipped with a homogeneous map
$$\delta:N\to (A\otimes N)[1]$$
satisfying the compatibility condition
$$(d\otimes \id_N)\circ \delta + (\mu\otimes \id_N)\circ (\id_A\otimes \delta)\circ \delta= 0.$$
We can define maps
$$\delta_k:N\to (A^{\otimes k}\otimes N)[k]$$
inductively by
$$\delta_k = \left\{  \begin{array}{ll}
\id_N & \textrm{ for } k=0\\
(\id_A\otimes \delta_{k-1})\circ \delta & \textrm{ for } k\geq 1 
\end{array}\right.$$

If $(N, \delta)$ is a type $D$ structure, then $A\otimes N$ can be given the structure of a differential module over $A$, with differential
$$\bdy = \mu_1\otimes \id_N + (\mu_2\otimes \id_N)\circ \delta$$
and algebra action
$$a\cdot (b,n) = (\mu_2(a,b),n).$$

 A type $D$ structure is said to be \emph{bounded} if for any $\xxx \in N$, $\delta_i(\xxx) = 0$ for all sufficiently large $i$.

If $M$ is a right $\cala_\infty$-module over $A$ and $N$ is a left type $D$ structure, and at least one of them is bounded, we can define the \emph{box tensor product} $M\boxtimes N$ to be the  vector space $M\otimes N$ with differential 
$$\bdy: M\otimes N \to M\otimes N[1]$$
defined by 
$$\bdy = \sum_{k=1}^{\infty}(m_k\otimes \id_N)\circ(\id_M\otimes \delta_{k-1}).$$
The boundedness condition guarantees that the above sum is finite. In that case $\bdy^2= 0$ and $M\boxtimes N$ is a graded chain complex.

Given two differential graded algebras, four types of bimodules can be defined in a similar way. We omit those definitions and refer the reader to \cite[Section 2.2.4]{bimod}.

\subsection{Bordered three-manifolds, Heegaard diagrams, and their modules}

A \emph{bordered $3$-manifold} is  a triple $(Y, \zz, \phi)$, where $Y$ is a compact, oriented $3$-manifold with connected boundary $\bdy Y$, $\zz$ is a pointed matched circle, and $\phi: F(\zz)\to \bdy Y$ is an orientation-preserving homeomorphism. A bordered $3$-manifold may be represented by a \emph{bordered Heegaard diagram}  $\mathcal H =(\Sigma, \bbeta, \balpha, z)$, where $\Sigma$ is an oriented surface of some genus $g$  with one boundary component, $\bbeta$ is a set of pairwise-disjoint, homologically independent circles in $\Int(\Sigma)$, $\balpha$ is a $(g+k)$-tuple of pairwise-disjoint curves in $\Sigma$, split into $g-k$ circles in $\Int(\Sigma)$, and $2k$ arcs with boundary on $\bdy \Sigma$, so that they are all homologically independent in $H_1(\Sigma, \bdy \Sigma)$, and $z$ is a point on $(\bdy\Sigma)\setminus (\balpha\cap \bdy \Sigma)$.
The boundary $\bdy\mathcal H$ of the Heegaard diagram has the structure of a pointed matched circle, where two points are matched if they belong to the same $\alpha$-arc. 
We can see how a bordered Heegaard diagram $\mathcal H$ specifies a bordered manifold  in the following way. Thicken up the surface  to $\Sigma\times [0,1]$, and attach a three-dimensional two-handle to each circle $\alpha_i\times \{0\}$, and a three-dimensional two-handle to each $\beta_i\times \{1\}$. Call the result $Y$, and let $\phi$ be the natural identification of $F(\bdy \mathcal H)$ with $\bdy Y$. Then $(Y, \bdy \mathcal H, \phi)$ is the bordered 3-manifold for $\mathcal H$.

To a bordered Heegaard diagram $(\mathcal H, z)=(\Sigma, \balpha, \bbeta, z)$, we associate either a left type $D$ structure $\cfdhat (\mathcal H, z)$ over $\mathcal A(-\bdy\mathcal H)$, or a right $\mathcal A_{\infty}$-module $\cfahat (\mathcal H, z)$ over $\mathcal A(\bdy\mathcal H)$. Similarly, we can represent a knot in a bordered $3$-manifold by a doubly-pointed bordered Heegaard diagram $(\mathcal H, z, w) = (\Sigma, \balpha, \bbeta, z, w)$, where $z$ and $w$ are in $\Sigma\setminus(\balpha\cup \bbeta)$, and  $z\in \bdy \mathcal H$. To this diagram we can associate a right $\mathcal A_{\infty}$-module $\cfam(\mathcal H, z, w)$, this time over $\F_2[U]$, where a holomorphic curve passing through $w$ with multiplicity $n$ contributes $U^n$ to the multiplication. Setting $U=0$ gives  $\cfahat(\mathcal H, z, w)$, where we count only holomorphic curves that do not cross $w$.

Now we define the above modules.
A \emph{generator} of a bordered Heegaard diagram $\mathcal H = (\Sigma, \balpha, \bbeta)$ of genus $g$ is a $g$-element subset $\xxx = \{x_1, \ldots, x_g\}$ of $\balpha\cap \bbeta$, such that there is exactly one point of $\xxx$  on each $\beta$-circle, exactly one point on each $\alpha$-circle, and at most one point on each $\alpha$-arc. Let $\mathfrak S(\mathcal H)$ denote the set of generators. Given $\xxx\in \mathfrak S(\mathcal H)$, let $o(\xxx)\subset [2k]$ denote the set of $\alpha$-arcs occupied by $\xxx$, and let $\bar o(\xxx)=[2k]\setminus o(\xxx)$ denote the set of unoccupied arcs.

Fix generators $\xxx$ and $\yyy$, and let $I$ be the interval $[0,1]$. Let $\pi_2(\xxx, \yyy)$, the \emph{homology classes from $\xxx$ to $\yyy$}, denote the elements of 
$$H_2(\Sigma\times I\times I, ((\balpha\times \{1\}\cup \bbeta\times \{0\}\cup(\bdy\Sigma\setminus z)\times I)\times I)\cup (\xxx\times I\times \{0\})\cup (\yyy\times I\times\{1\}))$$
which map to the relative fundamental class of $\xxx\times I\cup \yyy\times I$ under the composition of the boundary homomorphism and collapsing the rest of the boundary.

A homology class $B\in \pi_2(\xxx, \yyy)$ is determined by its \emph{domain}, the projection of $B$ to $H_2(\Sigma, \balpha\cup\bbeta\cup \bdy\Sigma)$. We can interpret the domain of $B$ as a linear combination of the components, or \emph{regions}, of $\Sigma\setminus (\balpha\cup \bbeta)$.

Concatenation at $\yyy\times I$, which corresponds to addition of domains, gives a product $\ast:\pi_2(\xxx, \yyy)\times \pi_2(\yyy, \www)\to \pi_2(\xxx, \www)$. This operation turns $\pi_2(\xxx, \xxx)$ into a group called the group of \emph{periodic domains}, which is naturally isomorphic to $H_2(Y, \bdy Y)$.

Let $X(\mathcal H)$ be the $\F_2$ vector space spanned by $\mathfrak S(\mathcal H)$. Define $I_D(\xx)= \bar o(\xx)$. We define an action  on $X(\mathcal H)$ of $\mathcal I(-\bdy\mathcal H)$ by
$$I(\sss)\cdot \xx = \left\{  \begin{array}{ll}
\xx & \textrm{ if } I(\sss) = I_D(\xx)\\
0 & \textrm{ otherwise. } 
\end{array}\right.$$
Then $\cfdhat(\mathcal H)$ is defined as an $\mathcal A(-\bdy\mathcal H)$-module by 
$$\cfdhat (\mathcal H) = \mathcal A(-\bdy\mathcal H)\otimes_{\mathcal I(-\bdy\mathcal H)}X(\mathcal H).$$
Since there are no explicit computations of $\cfdhat$ in this paper, we omit the definition of  the map $\delta_1$ and refer the reader to \cite[Section 6.1]{bfh2}. 

Define $I_A(\xx)=  o(\xx)$. The module $\cfahat(\mathcal H)$ is generated over $\F_2$ by $X(\mathcal H)$, and the right action of $\mathcal I(\bdy \mathcal H)$ on $\cfahat(\mathcal H)$ is defined by
$$\xx\cdot I(\sss) = \left\{  \begin{array}{ll}
\xx & \textrm{ if } I(\sss) = I_A(\xx)\\
0 & \textrm{ otherwise. } 
\end{array}\right.$$
The $\ainf$ multiplication maps count certain holomorphic representatives of the homology classes defined in this section \cite[Definition 7.3]{bfh2}.

\subsection{Gradings}\label{m}

It turns out there is no $\Z$-grading on  $\mathcal A(\zz)$. Instead, the algebra is graded by a nonabelian group $G$, and the  left or right modules over it are graded by left or right cosets of  a subgroup of $G$. In the topological case, domains of a Heegaard diagram can be also graded by $G$, and  the subgroup above is the image of periodic domains in $G$. 
We briefly recall the relevant definitions and results. For more detail, see \cite[Sections 2.5, 3.3, and 10]{bfh2}.

\subsubsection{Gradings by nonabelian groups}
Before we specialize to bordered Floer homology, we recall  gradings by nonabelian groups in general. 
\begin{defn} \label{def:group_gr}
Let $G$ be a group and $\lambda$ be an element  in the center of $G$. A \emph{differential algebra graded by $(G, \lambda)$} is a differential algebra $A$ with a grading $\gr$ by $G$ (as a set), i.e.  a decomposition $A = \bigoplus_{g\in G}A_g$. We say that an element $a\in A_g$ is \emph{homogeneous of degree $g$}, and write $\gr(a) = g$. We also require that for homogeneous $a$ and $b$ the grading is
\begin{itemize} 
\item compatible with the product, i.e. $\gr(ab) = \gr(a)\gr(b)$,
\item compatible with the differential, i.e. $\gr(\bdy a) = \lambda^{-1}\gr(a)$.
\end{itemize}
\end{defn}
In this notation, a $\Z$-grading is a grading by $(\Z, 1)$. In this paper, we discuss a $\Z/2$-grading, i.e. a grading by $(\Z/2, 1)$.  

\begin{defn}
Let  $A$ be a differential algebra graded by $(G, \lambda)$, and let $S$ be a set with a right $G$ action.  A \emph{right differential  $A$-module graded by $S$} is a right differential $A$-module $M$  with a grading $\gr$ by $S$ (as a set), i.e. a decomposition $M\cong\bigoplus_{s\in S} M_s$, so that for homogeneous $a\in A$ and $x\in M$
\begin{itemize}
\item $\gr(xa) = \gr(x)\gr(a)$,
\item $\gr(\bdy x) = \gr(x)\lambda^{-1}$.
\end{itemize}
  Similarly, a right $S$-graded $\ainf$-module over $A$ is a right $\ainf$-module $M$ over $A$ whose underlying module is graded by $S$, such that for homogeneous $x\in M$ and $a_i\in A$, we have 
  $$\gr(m_{i+1}(x\otimes a_1\otimes \cdots\otimes a_i)) = \gr(x)\gr(a_1)\cdots\gr(a_i).$$
 \end{defn}
  
 If $G$ acts transitively on $S$, we can identify $S$ with the space of right cosets of the stabilizer $G_s$ of any $s\in S$.
 
  Similarly, left modules are graded by sets with a left $G$ action (left $G$-sets).
  
   Tensor products are graded by twisted products of sets.
   \begin{defn}
   Let $A$ be a differential algebra graded by $(G, \lambda)$, let $M$ be a right $A$-module  graded by a right $G$-set $S$, and let $N$ be a left $A$-module graded by a left $G$-set $T$. The tensor product $M\otimes_{\bf k} N$ is then  graded by the set 
   $$S\times_G T := S\times T/\{(s,gt) \sim (sg, t) | s\in S, t\in T, g\in G \},$$
   and this grading descends to a grading on the chain complex $M\otimes_A N$. There is an action of $\lambda$ on $S\times_G T$ by $\lambda \cdot [s\times t]:= [s\lambda \times t] =  [s\times \lambda t]$, and the differential acts by $\lambda^{-1}$ on the gradings. 
   \end{defn} 
   
   \subsubsection{Unrefined gradings for bordered Heegaard Floer homology}
   We recall the undrefined gradings on the algebra and modules. 
   Let $\zz = (Z, {\bf a}, M, z)$ be a pointed matched circle, with $|{\bf a}| = 4k$. We recall the gradings on the algebra $\az$ and the modules over it. Let $Z' = Z\setminus \{z\}$. For $p\in {\bf a}$ and $\alpha\in H_1(Z', {\bf a})$, the \emph{multiplicity $m(\alpha, p)$ of $p$ in $\alpha$} is the average multiplicity with which $\alpha$ covers the regions adjacent to $p$. Extend $m$ bilinearly to a map $m: H_1(Z', {\bf a})\times H_0({\bf a})\to \frac 1 2 \Z$, and define the \emph{linking of $\alpha, \beta\in H_1(Z', {\bf a})$} by 
   $$L(\alpha, \beta):= m(\beta, \bdy\alpha).$$ 

Define $G'(4k)$ to be the group generated by pairs $(j, \alpha)$,  with $j\in \frac 1 2 \Z$ and $\alpha\in H_1(Z', {\bf a})$ such that $j\equiv\frac 1 4 \#\{p\in {\bf a}| m(\alpha, p) \textrm{ is a half integer}\} \mod 1$, with multiplication given by 
$$(j_1, \alpha_1)\cdot (j_2, \alpha_2) = (j_1 + j_2 + L(\alpha_1, \alpha_2), \alpha_1+\alpha_2).$$
We refer to $j$ as the \emph{Maslov component} of $(j, \alpha)$, and $\alpha$ as the \emph{$spin^c$ component} of $(j, \alpha)$. 

We define a grading $\gr'$ on $\cala(4k)$ by $G'(4k)$ as follows. For $a = (S, T, \phi)\in \cala(4k)$, define 
$$[a] = \sum_{s\in S}[s, \phi(s)]\in H_1(Z', {\bf a}), $$
i.e. $[a]$ is the sum of the intervals on $Z'$ corresponding to the strands in the strand diagram for $a$.
Also define 
$$\iota(a) = \inv(a) - m([a], S).$$
The function $\gr'$ given by 
$$\gr'(a) = (\iota(a), [a])$$
defines a grading on $\cala(4k)$ in the sense of Definition \ref{def:group_gr}, where
the distinguished central element of $G'(4k)$ is $\lambda = (1,0)$. The elements of form $a_0(\brho)$ are homogeneous with respect to this grading, and the grading $\gr'$ descends to a grading on the algebra $\az$.

With an additional choice of a base generator in each $\mathrm{spin}^c$ structure, one obtains a grading by a right $G'(4k)$-set or $G(\HH)$-set   on the right $\ainf$-module associated to a Heegaard diagram $\HH$, and similarly a grading by a left $G'(4k)$-set or $G(-\HH)$-set   on the left type $D$  structure associated to $\HH$. 

In order to define the gradings on the modules, we first grade domains. Let $\HH$ be a bordered Heegaard diagram with $\bdy \HH = \zz$. Let $B\in \pi_2(\xx, \yy)$ be a domain away from the basepoint. Define $g'(B)\in G(4k)$ by 
$$g'(B) := (-e(B) - n_{\xx}(B)- n_{\yy}(B), \bdy^{\bdy}B),$$
where $\bdy^{\bdy}B$ is the part of $\bdy B$ contained in $\bdy \HH$, viewed as an element of $H_1(Z', {\bf a})$. This grading is multiplicative under concatenation, i.e. $g'(B_1\ast B_2) = g'(B_1)g'(B_2)$. For a fixed $\xx\in \mathfrak S(\HH)$, the set $P'(\xx):=\{g'(B)| B\in \pi_2(\xx, \xx)\}$ is a subgroup of $G(4k)$. 

The $\ainf$-module $\cfahat(\HH)$ splits as a direct sum over $\mathrm{spin}^c(Y)$, where $Y$ is the bordered manifold associated to $\HH$. For each nonzero summand $\cfahat(\HH, \mathfrak s)$, fix a base generator  $\xx\in \ghs$, and define $S_A'(\HH, \mathfrak s):= P'(\xx)\backslash G'(4k)$. For a generator $\yy\in \ghs$, let $B\in \pi_2(\xx, \yy)$, and define
$$\gr'(\yy) =P'(\xx)g'(B).$$ 
The function $\gr'$ defines a grading on  $\cfahat(\HH, \mathfrak s)$ by $S_A'(\HH, \mathfrak s)$.

To define a grading on $\cfdhat(\HH)$, we must now work with  $G'(4k)$ or $G(\zz)$ where the pointed matched circle  is $\zz=-\HH$. The orientation reversing identity map $r:Z\to -Z$ induces a map $R:G'(4k)\to G'(4k)$ or $G(\zz)\to G(-\zz)$, i.e. $R(j, \alpha) = (j, r_{\ast}(\alpha))$. For nonempty $\ghs$, fix $\xx\in \ghs$ and define $S_D'(\HH, \mathfrak s):=G'(4k)/R(P'(\xx))$. Define the grading of $\yy\in \ghs$ by picking $B\in \pi_2(\xx, \yy)$ and setting
$$\gr'(\yy) = R(g'(B))R(P'(\xx)).$$
This defines a grading on the module $\cfdhat(\HH, \mathfrak s)$ by $S_D'(\HH, \mathfrak s)$.

 \subsubsection{Refined gradings for bordered Heegaard Floer homology}\label{ssec:refined}

If certain choices are made, the grading $\gr'$ descends to a grading $\gr$ on $\mathcal A(\mathcal Z, i)$ in the smaller group $G(\zz) = \{ (j, \alpha)\in G'(4k)|M_{\ast}(\alpha) = 0\}$, which is a central extension of $H_1(F)$. The group $G(\zz)$ is the Heisenberg group associated to the intersection form of $F$, and the grading function is defined as follows. Pick an arbitrary base idempotent $I(\sss)\in \mathcal A(\zz, i)$. For each other $I(\ttt)$, pick an element $\psi(\ttt)\in G'(4k)$ so that $M_{\ast}\bdy[\psi(\ttt)] = \ttt-\sss$. Define
$$\gr(I(\ttt_1)a(\brho)I(\ttt_2)) = \psi(\ttt_1)\gr'(a(\brho))\psi(\ttt_2)^{-1}.$$
The choices $\sss, \psi(\ttt)$ are called \emph{grading refinement data}. The function $\gr$ is a grading on $\cala(\zz, i)$ by $(G(\zz), \lambda)$. Different choices of grading refinement data result in gradings which are conjugate in a certain sense, see \cite[Remark 3.46]{bfh2}.

After choosing refinement data, one can grade a domain $B\in \pi_2(\xx, \yy)$ by 
$$g(B) = \psi(I_A(\xx))g'(B) \psi(I_A(\yy))^{-1}\in G(\HH).$$
For $\ghs\neq \emptyset$, fix $\xx\in \ghs$. The set $P(\xx) = \{g(B)|B\in \pi_2(\xx, \xx)\}$ is a subgroup of $G(\HH)$, and $\cfahat(\HH, \mathfrak s)$ has a grading in $S_A(\HH, \mathfrak s) = P(\xx)\backslash G(\HH)$ via
$$\gr(\yy) = P(\xx)g(B), $$ 
where $B\in \pi_2(\xx, \yy)$.

To define a refined grading on $\cfdhat(\HH)$, first fix grading refinement data $\sss_{\bdy \HH}$ and $\psi_{\bdy \HH}$  for $\cala(\bdy\HH, 0)$, so that a refined grading $g$ on domains is defined. Then $\sss_{-\bdy \HH}= [2k]\setminus \sss_{\bdy\HH}$ and $\psi_{-\bdy \HH}(\ttt) = R(\psi_{\bdy \HH}([2k]\setminus\ttt))^{-1}$ define a grading refinement on $\cala(-\bdy \HH, 0)$, the \emph{reverse of the refinement on $\cala(\HH, 0)$}. For nonempty $\ghs$, fix $\xx\in \ghs$, and define $S_D(\HH, \mathfrak s) := G(-\bdy\HH)/R(P(\xx))$.  Then $\cfdhat(\HH, \mathfrak s)$ has a grading in $S_D(\HH, \mathfrak s)$ defined on generators $\yy\in\ghs$ by picking $B\in \pi_2(\xx, \yy)$ and setting 
$$\gr(\yy) = R(g(B))R(P(\xx)).$$

   \subsubsection{Tensor products}

For bordered Heegaard diagrams $\HH_1$ and $\HH_2$ that agree along the boundary, the tensor product $\cfahat(\HH_1, \mathfrak s_1)\boxtimes \cfdhat(\HH_2, \mathfrak s_2)$ has a grading $\gr^{\boxtimes}$ in the double-coset space
$$P_1(\xx_1)\backslash G(\zz)/R(P_2(\xx_1)),$$
where the base generators $\xx_1$ and $\xx_2$ are chosen to occupy complementary $\alpha$-arcs, and the grading refinement for $\cfahat(\HH_1, \mathfrak s_1)$ is chosen the same  as the grading refinement for $\cfdhat(\HH_2, \mathfrak s_2)$. Here $P_i(\xx_i)$  denotes the image of $\pi_2(\xx_i, \xx_i)$ in $G(\HH_i)$.

There is a homotopy equivalence 
$$\Phi:\cfahat(\mathcal H_1, \mathfrak s_1) \boxtimes \cfdhat(\mathcal H_2, \mathfrak s_2)\to  \bigoplus_{\substack{\mathfrak s\in \mathrm{spin}^c(Y)   \\ \mathfrak s|{Y_i} = \mathfrak s_i, i = 1,2 }}\cfhat(\HH, \mathfrak s)$$
which respects the gradings in the following sense. If  $u\boxtimes x$ and $v\boxtimes y$ are in the same $\mathrm{spin}^c$ structure $\mathfrak s$, and are homogeneous with gradings  related by 
$$\gr^{\boxtimes}(u\boxtimes x) = \lambda^t \gr^{\boxtimes}(v\boxtimes y),$$
then $\Phi(u\boxtimes x)$  and $\Phi(v\boxtimes y)$ are homogeneous with respect to the Maslov grading, and their relative Maslov grading is $t$ modulo $\mathrm{div}(c_1(\mathfrak s))$. 
See \cite[Theorem 10.42]{bfh2},

\section{A $\Z/2$ grading}\label{z2sec}

Instead of working with the grading groups $G'(4k)$ and $G(\zz)$, we will define a homological $\Z/2$ grading $m$ on the algebra $\az$, and study the \grg of $\Z/2$-graded modules over $\az$. The reason we prefer this is because we would like to easily relate the Euler characteristic of  $\cfahat$ or $\cfdhat$ of bordered $3$-manifolds in $K_0(\az)$ to other familiar $3$-manifold invariants.

Let  $\zz = (Z, {\bf{a}}, M, z)$ be a a pointed matched circle with associated surface $F = F(\zz)$ of genus $k$.  For each $i\in [2k]$, let $\rho_i$ be the Reeb chord connecting the two points in $M^{-1}(i)$.  Choose grading refinement data, i.e. a base idempotent $I(\sss)$ and an element $\psi(\ttt)\in G'(4k)$ for each $\ttt$, as in Section \ref{ssec:refined}, to get a  grading $\gr$ on $\az$ with values in $G(\zz)$.  For any $\rho_i$ and any refinement data, $a(\rho_i)$ is homogeneous with respect to the refined grading, so let $g_i = \gr(a(\rho_i))$ for $i\in [2k]$.
The set $\{[a(\rho_i)]| i \in [2k]\}$ generates $H_1(F; \Z)$, and $\lambda$ generates the center of $G(\zz)$, so $\{\lambda, g_1, \ldots, g_{2k}\}$ is a generating set for $G(\zz)$. Since the commutators of $G(\zz)$ are precisely the even powers of $\lambda$, we see that the abelianization of $G(\zz)$ is 
$$G(\zz)^{Ab} = \Z/2\times \Z^{2k}.$$
Thus, given any abelian group $H$, any map from  $\{\lambda, g_1, \ldots, g_{2k}\}$ to $\Z/2\times H$ sending $\lambda$ into $\Z/2\times \{0\}$
determines a homomorphism $h$ from $G(\zz)$ to $\Z/2\times H$. Moreover, if  $h(\lambda)=(1,0)$, the composition $h\circ \gr: \az\to \Z/2\times H$ is a grading, with the differential lowering the grading by $(1, 0)$. 

Here we choose $H$ to be the trivial group, and define a homomorphism 
\begin{align*}
f: G(\zz) &\to \Z/2\\
\lambda &\mapsto 1\\
g_i &\mapsto 1
\end{align*}

\begin{defn}
The $\Z/2$ grading of an element $a\in \az$ is defined as  $m(a) := f\circ \gr (a)$.
\end{defn}
The discussion above shows that $m$ is a differential grading on $\az$ (with $\bdy$ lowering the grading by $1$ mod $2$).

With this $\Z/2$ grading on $\az$, we are ready to prove Theorems \ref{intro_k0} and \ref{intro_tensor}, which are statements for differential graded modules  (where the grading is by $\Z/2$). However, in order to define an invariant in $K_0(\az)$ of bordered manifolds, we need to define a $\Z/2$ grading on $\cfahat\hs$ and $\cfdhat\hs$.

First we define a $\Z/2$ grading on domains, using the refined grading $g$ by $G(\zz)$.

\begin{defn} 
For any $x, y \in \mathfrak S(\mathcal H)$ and $B\in \pi_2(x,y)$, define $m(B)\in \Z/2$ by
$$m(B):= f\circ g(B).$$
\end{defn}

\begin{theorem}\label{domains}
For periodic domains, the map $m$ is independent of the grading refinement data. In fact, if $B$ is a periodic domain, $m(B)=0$.
\end{theorem}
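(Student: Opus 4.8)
The plan is to compute $m(B) = f\circ g(B)$ for a periodic domain $B\in\pi_2(\xx,\xx)$ directly from the formula $g(B) = \psi(I_A(\xx))\,g'(B)\,\psi(I_A(\xx))^{-1}$, and then push through the homomorphism $f: G(\zz)\to\Z/2$. Since $f$ is a homomorphism and $\Z/2$ is abelian, the two conjugating factors $f(\psi(I_A(\xx)))$ and $f(\psi(I_A(\xx))^{-1}) = f(\psi(I_A(\xx)))$ cancel, so $m(B) = f\circ g'(B)$; this already shows independence of the refinement data, because $g'$ does not involve $\psi$ at all. What remains is to show $f\circ g'(B) = 0$.

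Next I would recall that $g'(B) = (-e(B) - n_{\xx}(B) - n_{\yy}(B),\ \bdy^\bdy B)$, and here $\yy = \xx$, so the Maslov component is $-e(B) - 2n_{\xx}(B)$ and the $\mathrm{spin}^c$ component is $\bdy^\bdy B\in H_1(Z',\mathbf a)$. The key structural fact is that for a periodic domain, $\bdy^\bdy B$ is a \emph{closed} $1$-chain in $H_1(Z',\mathbf a)$ — its boundary in $H_0(\mathbf a)$ vanishes, so it represents a class in $H_1(Z')\cong 0$ modulo the contributions at the matched points, or more precisely $M_\ast(\bdy^\bdy B) = 0$ by the definition of the refined grading group $G(\zz)$. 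Writing $\bdy^\bdy B$ in terms of the generators $[a(\rho_i)]$ of $H_1(F;\Z)$ and tracking how $f$ evaluates: $f(g_i) = 1$ for each $i$, and $f(\lambda) = 1$, so $f\circ g'(B)$ becomes a count, mod $2$, of (the Maslov-type integer contribution) plus (the number of $\rho_i$'s appearing in $\bdy^\bdy B$ with odd multiplicity). The claim is that this total is even.

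The cleanest route to finish is probably to use that $\pi_2(\xx,\xx)\cong H_2(Y,\bdy Y)$ and that the image $P(\xx) = \{g(B)\}$ of periodic domains sits inside $G(\zz)$ in a controlled way — in particular, by the general theory of \cite{bfh2} periodic domains grade to elements whose image in $H_1(F;\Z)$ is $\ker(i_\ast: H_1(F)\to H_1(Y))$, and these elements must have the property that their $f$-image vanishes because $f$ factors through something that kills this kernel. Alternatively, and more self-containedly, I would argue that for a periodic domain the relevant Euler-measure-plus-point-count quantity $e(B) + 2n_\xx(B)$ has the \emph{same} parity as the number of odd-multiplicity Reeb chords in $\bdy^\bdy B$: this is essentially an index/parity computation, the mod-$2$ reduction of the index formula, using that a periodic domain's local multiplicities at the $\alpha$-arc endpoints are constrained (each $\alpha$-arc is either fully occupied by $\xx$ on both sides or not, and the multiplicity jumps across it are governed by the matching).

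I expect the main obstacle to be the last step: nailing down the parity identity between the Maslov component $-e(B)-2n_\xx(B)$ and the number of $\rho_i$ with odd multiplicity in $\bdy^\bdy B$. This is where a careful accounting of how the Euler measure and the boundary multiplicities interact at the matched points is needed, and it is tempting but slightly delicate to reduce it to a known mod-$2$ index statement. If a direct combinatorial argument is awkward, the fallback is to invoke the grading compatibility from \cite[Section 10]{bfh2} — that $g$ is multiplicative and that the periodic-domain subgroup is generated by classes coming from the $1$-handles of $Y$ — and observe that each generator of $P(\xx)$ visibly maps to $0$ under $f$, which suffices since $f$ is a homomorphism. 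Independence of the refinement data, by contrast, I expect to be immediate from the conjugation cancellation described above.
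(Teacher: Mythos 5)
Your reduction to the unrefined grading is sound and is essentially equivalent to the paper's first step: you observe that for $B\in\pi_2(\xx,\xx)$, $g(B)=\psi(I_A(\xx))\,g'(B)\,\psi(I_A(\xx))^{-1}$ and that $f$, being a homomorphism to an abelian group, kills the conjugation, so $m(B)=f\circ g'(B)$; the paper instead concatenates $B_2\ast B_1\ast(-B_2)$ and uses the homomorphism property of $\fs$ to move the base point to a generator $\xx$ with $I_A(\xx)=I(\sss)$. Both are fine, though be aware that independence of the refinement data does not ``already follow'' from the conjugation cancellation alone: $f$ itself is defined using the refined grading (through $g_i=\gr(a(\rho_i))$), so one also needs Lemma \ref{gi}'s observation that $g_i$ is determined mod $2$ by $\sss$ alone; in the end independence is a byproduct of $m(B)=0$ rather than a separate cheap fact.

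The real issue is that the remaining, and harder, half of the theorem is left unproved. You correctly identify the target — a mod-$2$ identity relating $e(B)+2n_\xx(B)$ to the multiplicities $h_i$ of $\bdy^\bdy B$ along the Reeb chords and the pairwise intersection numbers $\delta_{i_1i_2}$ — but you stop at ``this is where a careful accounting... is needed.'' This accounting is exactly the content of the paper's proof: it builds the surface $F$ from \cite[Lemma 10.3]{bfh2}, rewrites $e(B)+2n_\xx(B)$ in terms of the number $t$ of boundary components of $F$ that hit $\bdy\Sigma$ and the half-disk counts, and then computes $t$ modulo $2$ by representing the $\bdy\Sigma$-adjacent boundary of $F$ as a closed braid whose crossing count reproduces the $\sum_{i_1<i_2}h_{i_1}h_{i_2}\delta_{i_1i_2}$ term. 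None of this is routine, and it is the crux of the theorem. Your proposed fallback — that the generators of $P(\xx)$ ``visibly'' map to $0$ under $f$ — is not an argument; checking $f\circ g$ vanishes on a generating set of periodic domains requires precisely the same parity computation (choosing a nice basis of $H_2(Y,\bdy Y)$ does not make the Euler-measure/boundary-multiplicity interaction go away). As written, the proposal has a genuine gap at its central step.
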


Before we prove the theorem, we make an observation about the refined grading on the generators $a(\rho_i)$, and then give an alternate definition of $m$.
\begin{lemma}\label{gi}
Reducing the Maslov component modulo $2$, the refined grading on $a(\rho_i)$ is given by 
\begin{displaymath}
g_i = \left\{ \begin{array}{ll}
(-\frac{1}{2}; \left[a(\rho_i)\right]) \quad & \textrm{if  $i\in {\bf s}$}\vspace{.2cm}\\ 
 (\frac{1}{2}; \left[a(\rho_i)\right]) & \textrm{if $i\notin {\bf s}$.}
\end{array} \right.
\end{displaymath}
\end{lemma}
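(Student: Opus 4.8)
The plan is to compute $g_i=\gr(a(\rho_i))$ from the unrefined grading $\gr'$ together with the conjugation introduced by the refinement data, and then to pin that conjugation down modulo $2$ using only the defining property $M_\ast\bdy[\psi(\ttt)]=\ttt-\sss$. To begin, I would determine which idempotent-components of $a(\rho_i)$ are nonzero. Write $p<q$ for the two endpoints of $\rho_i$, so $M(p)=M(q)=i$ and $[a(\rho_i)]=[p,q]$ is the chord missing $z$. A nonzero term $I(\ttt_1)\,a_0(\rho_i)\,I(\ttt_2)$ needs a section of $\ttt_2$ containing $p$ and a section of $\ttt_1$ containing $q$; tracking the added horizontal strands forces $\ttt_1=\ttt_2$ and $i\in\ttt_1$. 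Hence $a(\rho_i)=\sum_{\ttt\ni i}I(\ttt)\,a_0(\rho_i)\,I(\ttt)$, and the formula for the refined grading gives $g_i=\psi(\ttt)\,\gr'(a_0(\rho_i))\,\psi(\ttt)^{-1}$ for every $\ttt\ni i$.

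Next I would show $\gr'(a_0(\rho_i))=(-\tfrac12;[a(\rho_i)])$. The second coordinate is visibly $[a(\rho_i)]$. For the first, $\iota(a_0(\rho_i))=\inv(a_0(\rho_i))-m([a(\rho_i)],S)$; each horizontal strand nested strictly between $p$ and $q$ contributes $+1$ to $\inv$ and exactly $+1$ to $m([a(\rho_i)],S)$, so these cancel, and all that survives is the half-integer $\tfrac12$ coming from the contribution of the initial endpoint $p$ to $m([a(\rho_i)],S)$; thus $\iota=-\tfrac12$. (This also reproves, with its value, the homogeneity of $a_0(\brho)$ recalled above.) Now conjugation by $\psi(\ttt)=(j_\ttt;\beta_\ttt)$ fixes the second coordinate and, by the multiplication rule of $G'(4k)$, shifts the first by $L(\beta_\ttt,[p,q])-L([p,q],\beta_\ttt)$.

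The heart of the matter is the identity
$$L(\alpha,[p,q])-L([p,q],\alpha)=2\bigl(m(\alpha,p)-m(\alpha,q)\bigr)\qquad\text{for all }\alpha\in H_1(Z',{\bf a}),$$
which follows from the definitions of $L$ and $m$ by a short telescoping computation (the interior coefficients of $\bdy\alpha$ along $[p,q]$ collapse to a boundary term). In particular this shift is an integer, and it is even precisely when $m(\alpha,p)$ and $m(\alpha,q)$ are congruent modulo $\Z$, i.e.\ when the coefficients of $p$ and of $q$ in $\bdy\alpha$ have the same parity. Apply this with $\alpha=\beta_\ttt$: the coefficient of $i$ in $M_\ast\bdy[\psi(\ttt)]=\ttt-\sss$ is the sum of the coefficients of $p$ and $q$ in $\bdy[\psi(\ttt)]$, and since $i\in\ttt$ it equals $1$ if $i\notin\sss$ and $0$ if $i\in\sss$. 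Hence those two coefficients have equal parity exactly when $i\in\sss$, the Maslov shift is even exactly when $i\in\sss$, and therefore $g_i\equiv(-\tfrac12;[a(\rho_i)])$ if $i\in\sss$ and $g_i\equiv(\tfrac12;[a(\rho_i)])$ if $i\notin\sss$, modulo $2$. Note that this uses nothing about the $\psi(\ttt)$ beyond their defining property, matching the hypothesis that the conclusion holds for any refinement data.

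The step I expect to be the main obstacle is the displayed linking identity together with keeping all conventions consistent: the orientation of chords and of $\bdy$, the role of the basepoint $z$ (so that the regions $p^-$ and $q^+$ flanking the chord $[a(\rho_i)]$ are honest regions of $Z'$), and the half-integer bookkeeping built into $m$. Once that identity is available, the rest is routine.
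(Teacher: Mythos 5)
Your proof is correct and takes essentially the same route as the paper: compute $\gr'(a(\rho_i))=(-\tfrac12;[a(\rho_i)])$, conjugate by $\psi(\ttt)$, and extract the parity of the resulting Maslov shift from the defining property $M_\ast\bdy[\psi(\ttt)]=\ttt-\sss$. Your displayed identity $L(\alpha,[p,q])-L([p,q],\alpha)=2\bigl(m(\alpha,p)-m(\alpha,q)\bigr)$ is an unwinding of the step where the paper collapses the shift to $2L\bigl([\psi(\ttt)],[a(\rho_i)]\bigr)$ and evaluates it modulo $1$ via the endpoint multiplicities $n_{\rho_i^\pm}$, so the two arguments coincide up to the cosmetic difference that you treat $i\in\sss$ and $i\notin\sss$ uniformly with an arbitrary $\ttt\ni i$, whereas the paper handles $i\in\sss$ directly and takes the specific $\ttt=(\sss\setminus j)\cup i$ for the other case.
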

\begin{proof}
If $i\in \sss$, then $I(\sss)a(\rho_i)I(\sss)$ is nonzero, so $g_i = \gr'(a(\rho_i)) = (-\frac{1}{2}; \left[a(\rho_i)\right])$.
If $i\notin \sss$, then choose some $j\in \sss$ and define $\ttt := (\sss\setminus j) \cup i$. Then $I(\ttt)a(\rho_i)I(\ttt)\neq 0$, so 
\begin{align*}
g_i &= \psi(\ttt) \gr'(a(\rho_i))\psi(\ttt)^{-1}\\
&= (-\frac{1}{2}+ L(\left[\psi(\ttt)\right], \left[a(\rho_i)\right]) + L(\left[\psi(\ttt)\right], \left[\psi(\ttt)^{-1}\right]) + L(    \left[a(\rho_i)\right] , \left[\psi(\ttt)^{-1}\right]); \left[a(\rho_i)\right])\\
&= (-\frac{1}{2}+ L(\left[\psi(\ttt)\right], \left[a(\rho_i)\right]) + L(    \left[a(\rho_i)\right] , \left[\psi(\ttt)^{-1}\right]); \left[a(\rho_i)\right])\\
&= (-\frac{1}{2}+ L(\left[\psi(\ttt)\right], \left[a(\rho_i)\right]) - L(    \left[a(\rho_i)\right] , \left[\psi(\ttt)\right]); \left[a(\rho_i)\right])\\
&= (-\frac{1}{2}+ 2 L(\left[\psi(\ttt)\right], \left[a(\rho_i)\right]); \left[a(\rho_i)\right])
\end{align*}
Now, write  $\bdy\left[\psi(\ttt)\right] = \sum_{p=1}^{4k} n_pa_p$.
Since $M_{\ast}\bdy\left[\psi(\ttt)\right] = \ttt-\sss = j-i$, and $M^{-1}(i) = \{\rho_i^+, \rho_i^-\}$, it must be that $n_{\rho_i^+} + n_{\rho_i^-} = 1$. Then 
\begin{align*}
L(\left[\psi(\ttt)\right], \left[a(\rho_i)\right]) &= m(\left[a(\rho_i)\right], \left[\psi(\ttt)\right])\\
&=  m(\left[a(\rho_i)\right],  \sum_{p=1}^{4k} n_pa_p)\\
&=   \sum_{p=1}^{4k} n_p m(\left[a(\rho_i)\right], a_p)\\
&\equiv n_{\rho_i^+} m(\left[a(\rho_i)\right], \rho_i^+)+ n_{\rho_i^-} m(\left[a(\rho_i)\right], \rho_i^-)\\
&= \frac{1}{2}n_{\rho_i^+}+ \frac{1}{2}n_{\rho_i^-}\\
&= \frac{1}{2}\text{ mod } 1.
\end{align*}
It follows that $ 2 L(\left[\psi(\ttt)\right], \left[a(\rho_i)\right])\equiv 1 \text{ mod } 2$, so  $g_i =  (\frac{1}{2}; \left[a(\rho_i)\right])$.
\end{proof}

Using $\left[a(\rho_i)\right]$ as a standard basis for $H_1(F; \Z)$, if $\alpha = \sum_{i=1}^{2k}h_i\left[a(\rho_i)\right]$, we will write $(j; \alpha)$ as $(j; h_1, \ldots, h_{2k})$.

\begin{proposition} 
Let $\delta_{ij}:=\left[a(\rho_i)\right]\cap \left[a(\rho_j)\right]=  L(\rho_i, \rho_j)$, and define a map $\fs$ from $G(\zz)$ to $\Z/2$ by
$$\fs(j; h_1, \ldots, h_{2k}) = j - \frac{1}{2}\sum_{i\in \sss}h_i + \frac{1}{2}\sum_{i\notin \sss}h_i+ \sum_{i_1<i_2}h_{i_1} h_{i_2} \delta_{i_1 i_2}. $$
Then $\fs$ agrees with the homomorphism $f$.
\end{proposition}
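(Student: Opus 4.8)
The plan is to show that $\fs$, reduced mod $2$, is a group homomorphism $G(\zz)\to\Z/2$ sending each of $\lambda,g_1,\dots,g_{2k}$ to $1$; since $f$ is by definition \emph{the} homomorphism extending these assignments and $\{\lambda,g_1,\dots,g_{2k}\}$ generates $G(\zz)$, this forces $\fs=f$. Before anything else I would check that the right-hand side of the formula, a priori a half-integer, is always an integer, so that reduction mod $2$ is legitimate: writing $\alpha=\sum_i h_i[a(\rho_i)]$, a point $p\in{\bf a}$ contributes a half-integer to $m(\alpha,p)$ exactly when $h_{M(p)}$ is odd, whence the defining congruence of $G'(4k)$ becomes $j\equiv\tfrac12\sum_i h_i\pmod 1$; combining this with $-\tfrac12\sum_{i\in\sss}h_i+\tfrac12\sum_{i\notin\sss}h_i\equiv\tfrac12\sum_i h_i\pmod 1$ and the integrality of the $\delta$-term gives $\fs(j;\alpha)\in\Z$. (Alternatively, integrality can be recovered afterwards, once $\fs$ is known to be, modulo $\Z$, a homomorphism that kills the generators.)

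Next I would verify additivity directly from the multiplication law $(j_1,\alpha_1)(j_2,\alpha_2)=(j_1+j_2+L(\alpha_1,\alpha_2),\,\alpha_1+\alpha_2)$. Expanding everything in the basis $[a(\rho_i)]$, the $j_1+j_2$ and the linear parts of $\fs$ match $\fs(j_1;\alpha_1)+\fs(j_2;\alpha_2)$ on the nose, leaving the nonabelian correction $L(\alpha_1,\alpha_2)$ together with the polarization $B(\alpha_1,\alpha_2):=\sum_{i_1<i_2}(h_{i_1}h_{i_2}'+h_{i_1}'h_{i_2})\delta_{i_1i_2}$ of the quadratic term; so the failure of additivity is $L(\alpha_1,\alpha_2)+B(\alpha_1,\alpha_2)$. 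Since $L$ is bilinear with $L(\rho_i,\rho_j)=\delta_{ij}$, and $\delta_{ij}=[a(\rho_i)]\cap[a(\rho_j)]$ is the skew-symmetric intersection form of $F$ (so $\delta_{ii}=0$ and $\delta_{ij}\equiv\delta_{ji}\bmod 2$), one computes $L(\alpha_1,\alpha_2)=\sum_{i_1<i_2}(h_{i_1}h_{i_2}'-h_{i_1}'h_{i_2})\delta_{i_1i_2}\equiv B(\alpha_1,\alpha_2)\bmod 2$; hence the failure of additivity is $\equiv 2B(\alpha_1,\alpha_2)\equiv 0$, so $\fs$ descends to a homomorphism $G(\zz)\to\Z/2$.

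It remains to evaluate $\fs$ on the generators. Clearly $\fs(\lambda)=\fs(1;0,\dots,0)=1$. By Lemma~\ref{gi}, $g_i=\big(-\tfrac12;[a(\rho_i)]\big)$ if $i\in\sss$ and $g_i=\big(\tfrac12;[a(\rho_i)]\big)$ if $i\notin\sss$; since $[a(\rho_i)]$ is the $i$-th basis vector the $\delta$-term of $\fs$ vanishes, and in either case the $\pm\tfrac12$ Maslov component combines with the $\pm\tfrac12$ coming from the relevant linear term to give $\pm1\equiv1$. Thus $\fs$ and $f$ are homomorphisms agreeing on a generating set of $G(\zz)$, and therefore $\fs=f$.

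The step I expect to be the main obstacle is the additivity computation: one has to keep careful track of the half-integer Maslov data, and — the real point — recognize the term $L(\alpha_1,\alpha_2)$ produced by the nonabelian multiplication in $G(\zz)$ as, modulo $2$, the polarization of the quadratic correction built into $\fs$, which works precisely because the skew form $\delta$ becomes symmetric over $\Z/2$. Everything else is bookkeeping.
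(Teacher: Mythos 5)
Your proof is correct and follows the same strategy as the paper: verify that $\fs$ descends to a $\Z/2$-valued group homomorphism by cancelling the nonabelian correction $L(\alpha_1,\alpha_2)$ against the polarization of the quadratic $\delta$-term modulo $2$, then evaluate on the generating set $\{\lambda,g_1,\dots,g_{2k}\}$ using Lemma~\ref{gi}. The one thing you add that the paper leaves implicit is the explicit check that the formula's value is always an integer (via the defining congruence $j\equiv\tfrac14\#\{p:m(\alpha,p)\text{ half-integer}\}\bmod 1$), which is a nice clarification but not a deviation in method.
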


\begin{proof}
Note that  $ \left[a(\rho_{i_1})\right]\cap \left[a(\rho_{i_2})\right] = -  \left[a(\rho_{i_2})\right]\cap \left[a(\rho_{i_1})\right]\in \Z$ implies  $ \delta_{i_1 i_2}\equiv \delta_{i_2 i_1} \textrm{ mod } 2$.

Given $a = (j_a; h_1^a, \ldots, h_{2k}^a)$ and $b = (j_b; h_1^b, \ldots, h_{2k}^b)$, 
\begin{align*}
\fs(ab) &= \fs(j_a + j_b + L(\left[a\right], \left[b\right]); h_1^a+ h_1^b, \ldots, h_{2k}^a + h_{2k}^b)\\
&= j_a + j_b + L(\left[a\right], \left[b\right]) - \frac{1}{2}\sum_{i\in \sss}(h_i^a+ h_i^b)+ \frac{1}{2}\sum_{i\notin \sss}(h_i^a+ h_i^b) \\
&\hspace{1.7in}+ \sum_{i<j}(h_i^a+ h_i^b)(h_j^a+ h_j^b) \delta_{ij}\\
&= j_a + j_b + L(\left[a\right], \left[b\right]) - \frac{1}{2}\sum_{i\in \sss}(h_i^a+ h_i^b)+ \frac{1}{2}\sum_{i\notin \sss}(h_i^a+ h_i^b) \\
&\hspace{1.7in}+ \sum_{i<j}h_i^ah_j^a \delta_{ij}+ \sum_{i<j}h_i^ah_j^b \delta_{ij} + \sum_{i<j}h_j^ah_i^b \delta_{ij}+ \sum_{i<j}h_i^bh_j^b \delta_{ij}\\ 
&= \fs(a)+ \fs(b) +  L(\left[a\right], \left[b\right])+ \sum_{i<j}h_i^ah_j^b \delta_{ij} + \sum_{i>j}h_i^ah_j^b \delta_{ij}\\
&= \fs(a)+ \fs(b) +  L(\left[a\right], \left[b\right])+ \sum_{i\neq j}h_i^ah_j^b \delta_{ij} \\
&= \fs(a)+ \fs(b)
\end{align*}
The last equality  is true since $L(\left[a\right], \left[b\right])$ is an integer, and $\sum_{i\neq j}h_i^ah_j^b \delta_{ij} \equiv L(\left[a\right], \left[b\right])\text{ mod }2$. Thus, $\fs$ is a homomorphism.

We evaluate 
\begin{displaymath}
\fs(g_i) = \left\{ \begin{array}{ll}
-\frac{1}{2} - \frac{1}{2}\cdot 1+ \frac{1}{2}\cdot 0+ 0 = 1 \textrm{ mod }2  \quad & \textrm{if  $i\in {\bf s}$}\vspace{.2cm}\\ 
\frac{1}{2} - \frac{1}{2}\cdot 0+ \frac{1}{2}\cdot 1+ 0= 1  \textrm{ mod }2 & \textrm{if $i\notin {\bf s}$}
\end{array} \right.
\end{displaymath}
and $\fs(\lambda) = 1$, so $\fs$ agrees with $f$ on the generators $g_i$ and $\lambda$. Thus, $\fs\equiv f$.
\end{proof}

\begin{proof}[Proof of Theorem \ref{domains}]

We may assume that $B\in \pi_2(\xx,\xx)$ for some $\xx$ with $I_A(\xx) = I(\sss)$, since if $B_1\in \pi_2(\xx,\xx)$ and $I_A(\xx)\neq I(\sss)$, we can choose a generator $\yy$ with $I_A(\yy) = I(\sss)$, and a domain $B_2\in \pi_2(\yy,\xx)$, so that $B:=B_2\ast B_1\ast(-B_2)\in \pi_2(\yy,\yy)$, and we see that
\begin{align*}
\fs(g(B)) &= \fs(g(B_2\ast B_1\ast(-B_2)))\\
&= \fs(g(B_2)g(B_1)g(-B_2))\\
&=  \fs(g(B_2)g(B_1)g(B_2)^{-1})\\
&=  \fs(g(B_2))+ \fs(g(B_1)) - \fs (g(B_2))\\
&= \fs(g(B_1)).
\end{align*}

We construct a surface $F$ as in the proof of \cite[Lemma 10.3]{bfh2}. We follow the notation of that proof without explaining it for the next few paragraphs, so we advise the reader to get familiar with it before proceeding. 

If necessary, perform a $\beta$-curve isotopy as in Figure \ref{regions} to arrange that distinct segments of $\bdy \Sigma$ lie in distinct regions of the Heegaard diagram, and label the regions $R_0, R_1, \ldots, R_{4k-1}$, beginning at the basepoint $z$, and following the orientation of $\bdy \Sigma$.  Let $\delta_i = m(R_i) - m(R_{i-1})$. 
      \vskip .2 cm

\begin{figure}[h]
 \centering
       \labellist
                \pinlabel $\cdots$ at 116 12
       \pinlabel $\to$ at 230 20
              \pinlabel $R_0$ at 267 12
       \pinlabel $R_1$ at 296 12
        \pinlabel $R_2$ at 328 12
         \pinlabel $\cdots$ at 380 12
         \pinlabel $R_{4k-1}$ at 426 12
                  \endlabellist
           
       \includegraphics[scale=.9]{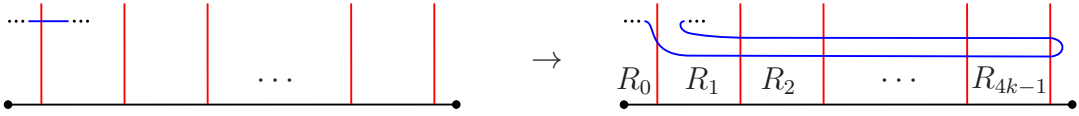}
       \vskip .2 cm
       \caption{An isotopy ensuring that there are $4k$ distinct regions at $\bdy \Sigma$. On the left, a collar neighborhood of $\bdy\Sigma$ and the closest intersection point to $a_1$ along $\alpha_1$. On the right, a finger move along $\bdy\Sigma$ of the $\beta$-curve near that intersection point.}
       \label{regions}
\end{figure}

If $x\in \xx$, then $C(x) = 0$, so the lift $\Phi^{-1}D(x)$ of a neighborhood of $x$ is a union of disks and $h(x)$ half-disks.
Observe that $e(\left[\Sigma\right]) + 2n_\xx (\left[\Sigma\right]) = 1$, so 
\begin{align*}
e(B)+ 2n_\xx(B) &\equiv e(\widetilde B)+ 2n_\xx(\widetilde B)- l(e(\left[\Sigma\right]) + 2n_\xx (\left[\Sigma\right]))\\
&\equiv e(F)+ 2\sum_{x\in \xx}n_{x}(F) - l
\end{align*}
\begin{align*}
\phantom{e(B)+ 2n_\xx(B)} &\equiv \chi(F) - \frac{1}{4}\sum |\delta_i| + \sum_{x\in \xx}h(x) - l\\
&\equiv  \#(\bdy F) - \frac{1}{4}\sum |\delta_i| + \sum_{x\in \xx}h(x) - l
\end{align*}
The boundary of any half-disk lies above an $\alpha$- or a $\beta$-circle, or an $\alpha$-arc. Since each $\alpha$ or $\beta$-circle is occupied exactly once by an $x\in \xx$, we can cancel in pairs all half disks with boundaries lying above $\alpha$- or $\beta$-circles with the components of $\bdy F$ that are lifts of the same $\alpha$- or $\beta$-circles. We are left only with half disks with boundary projecting to an $\alpha$-arc, and with components of $\bdy F$ whose projection intersects $\bdy \Sigma$. Let the number of these connected components of $\bdy F$ be $t$, and let $h^a(x)$ be the number of half disks at $x$ with boundary projecting to an $\alpha$-arc. Then 
$$e(B)+ 2n_\xx(B) \equiv t - \frac{1}{4}\sum |\delta_i| + \sum_{x\in \xx}h^a(x) - l.$$
Now, 
\begin{align*}
|\delta_i| &= \# \textrm{ corners of $F$ above } a_i \\
&= |\# \textrm{ preimages of } \alpha_{M(i)} \textrm{ in }\bdy F|\\
&= |\textrm{ multiplicity of } \alpha_{M(i)} \textrm{ in } \bdy B|\\
&= |h_{M(i)}|,
\end{align*}
and
$$
\sum_{x\in \xx}h^a(x) = \sum_{x\in \xx}  |\textrm{ multiplicity of the $\alpha$-arc occupied by $x$ in } \bdy B|
= \sum_{i\in \sss}|h_i|, 
$$
so 
\begin{align*}
e(B)+ 2n_\xx(B) &\equiv t-l - \frac{1}{4}\sum_{i=1}^{4k} |h_{M(i)}| + \sum_{i\in \sss}|h_i| \\
&\equiv t-l - \frac{1}{2}\sum_{i=1}^{2k} |h_i| + \sum_{i\in \sss}|h_i| \\
&\equiv  t -l- \frac{1}{2}\sum_{i\notin \sss} |h_i| +\frac{1}{2} \sum_{i\in \sss}|h_i|.
\end{align*}
Thus,
\begin{align*}
\fs(g(B)) &=  -e(B)- 2n_\xx(B) - \frac{1}{2}\sum_{i\in \sss}h_i + \frac{1}{2}\sum_{i\notin \sss}h_i+ \sum_{i_1<i_2}h_{i_1} h_{i_2} \delta_{i_1 i_2}\\
&= -t +l+ \frac{1}{2}\sum_{i\notin \sss} (|h_i|+h_i) -\frac{1}{2} \sum_{i\in \sss}(|h_i|+h_i) + \sum_{i_1<i_2}h_{i_1} h_{i_2} \delta_{i_1 i_2}\\
&= -t +l+ \sum_{i\notin \sss, h_i>0} h_i -\sum_{i\in \sss, h_i>0}h_i + \sum_{i_1<i_2}h_{i_1} h_{i_2} \delta_{i_1 i_2}\\
&= -t +l+ \sum_{h_i>0} h_i + \sum_{i_1<i_2}h_{i_1} h_{i_2} \delta_{i_1 i_2}.
\end{align*}

Since lifts of adjacent regions are identified along $\alpha$-arcs in pairs, starting at the highest index and going down, the lift of $\bdy^\bdy \widetilde B$ consists of positively oriented arcs of $\bdy \Sigma$ such that any two are either disjoint or nested, but never interleaved or abutting. In other words, we can represent the result of this identification geometrically on an annulus lying above $\zz$ by layers of horizontal arcs, so that the lowest layer consists of all $\bdy^\bdy R_i^{(j)}$ of highest index $(j)$, after gluing, the second layer from the bottom contains the second highest indices $(j)$, and so on. In this representation, each arc not in the lowest layer projects to a subset of the projection of the arc that it lies over. Since the identification along $\beta$-arcs is from lowest to highest index, following the regions along the higher multiplicity side of an $\alpha$-arc at $\bdy\Sigma$ shows that the boundaries of the set of arcs are matched along $\alpha$-arcs in order from the highest level (i.e. lowest index) to the lowest possible, to form the $t$ circles of $\bdy F$ that contain $\alpha$-arcs.  See, for example, Figure \ref{tower}.

\begin{figure}[h]
 \centering
       \labellist
       \pinlabel $z$ at -5 1
       \pinlabel $z$ at 280 1
       \pinlabel $z$ at 139 69
                  \endlabellist
       \includegraphics[scale=.9]{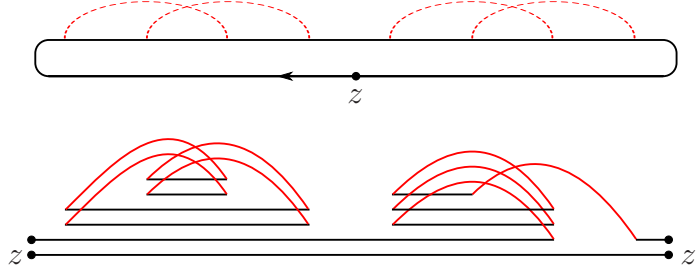}
       \vskip .2 cm
       \caption{Top: The genus $2$ split circle. Bottom: The layers representation of  the boundary components of $F$ which contain parts of $\bdy \Sigma$, in the case when $B$ is
       a domain with boundary $(2, 2, 3, -1)$ and after  two copies of $\Sigma$ have been added to $B$. Here $l=2$, $l_0=1$.}
       \label{tower}
\end{figure}

Rotating $90^{\circ}$ counterclockwise, we can draw $\zz$ in the plane  as a vertical line segment oriented upwards with ends identified with the basepoint $z$, and we can draw the annulus as a rectangle, so that the top and bottom edges are identified and project to $z$, and so that when we endow all $\bdy \Sigma$-arcs and $\alpha$-arcs with an orientation arising from the orientation of $F$, all $\bdy\Sigma$-arcs are oriented upwards.

Note that for a matched pair $(i,j)$ with $i<j$, $h_{M(i)} = \delta_i = -\delta_j$, i.e.  for $i\in \left[2k\right]$, $h_i = \delta_{\rho_i^-} = -\delta_{\rho_i^+}$. If $h_i<0$, then $\delta_{\rho_i^-}<0$, so there are $|h_i|$ $\bdy\Sigma$-arcs ending at $\rho_i^-$, hence $|h_i|$ $\alpha$-arcs starting at $\rho_i^-$ and ending at $\rho_i^+$. In other words, all $|h_i|$  copies of $\alpha_i$ are oriented upwards when $h_i<0$. Similarly, when $h_i>0$, there are $h_i$ $\bdy\Sigma$-arcs starting at $\rho_i^-$, hence $h_i$ $\alpha$-arcs ending at $\rho_i^-$ and starting at $\rho_i^+$, i.e. all $h_i$ copies of $\alpha_i$ are oriented downwards when $h_i<0$. (see Figure \ref{tower_braid}). 

For $h_i<0$, we can draw the copies of $\alpha_i$ as parallel arcs open to the right, curved so that they do not intersect any $\bdy\Sigma$-arcs. For $h_i>0$, we draw the copies of $\alpha_i$ similarly as  arcs starting at $\rho_i^+$ moving upwards, passing through the top horizontal line, continuing to move up from the bottom  to the copies of $\rho_i^-$, also in a way as to not intersect $\bdy\Sigma$-arcs. All arcs now move strictly upwards, and in this way we represent the relevant boundary components of $F$ as a closed braid (where we don't care about the sign of crossings). Note that a copy of $\alpha_i$ and a copy of $\alpha_j$ cross an even number of times if $\delta_{ij} = 0$, and an odd number of times if $\delta_{ij} = 1$. It follows that if the braid is given by a permutation $\sigma$,
\begin{align*}
t &= \#\textrm{ boundary components of $F$ intersecting $\bdy\Sigma$ }\\
&= \#\textrm{ components in the closure of the braid}\\
&= \#\textrm{cycles in $\sigma$, including cycles of length $1$}\\
&\equiv \#\textrm{ involutions of $\sigma$ } + \textrm{ length of $\sigma$ }\\
&\equiv \#\textrm{ crossings in the braid } + \#\textrm{ strands in the braid }\\
&\equiv \sum_{i_1<i_2}h_{i_1} h_{i_2} \delta_{i_1 i_2} + (\sum_{h_i>0} h_i + l).
\end{align*}
Hence, $\fs(g(B))= 0$. \qedhere

\begin{figure}[h]
 \centering
       \includegraphics[scale=.75, angle=90]{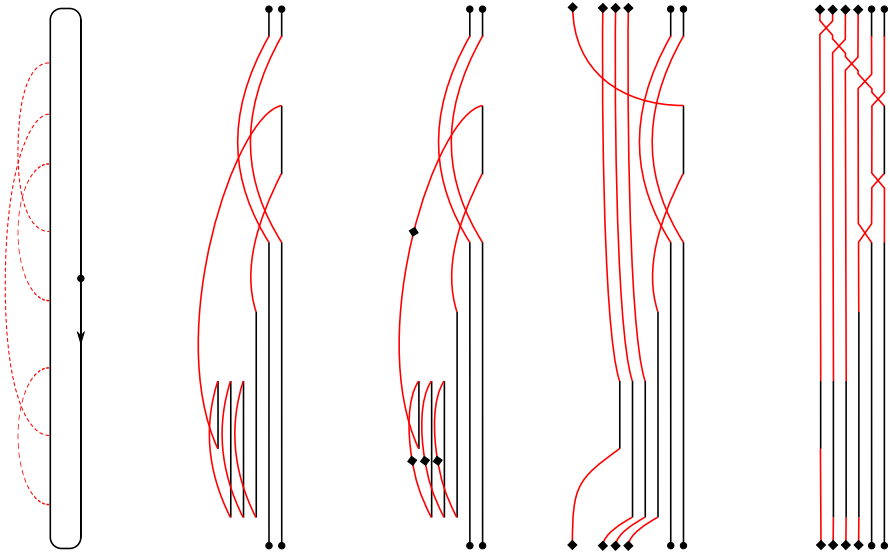}
       \vskip .2 cm
       \caption{Obtaining a braid representation of the boundary components of $F$ that intersect $\bdy\Sigma$. In this example $(h_1, h_2, h_3, h_4) = (3, 1, -1, -2)$.}
       \label{tower_braid}
\end{figure}
\end{proof}

Note that if $B$ is periodic, then $\fs(R(g(B)))=0$ too. 

Now we define $m$ as a relative grading on the modules.  
\begin{defn}
Suppose $\ghs\neq 0$. Pick $\xx\in \ghs$ and  define $m:\cfahat\hs\to \Z/2$ by $m(\xx) = 0$ and
$$m(\yy) = m(\xx)+m(B),$$
where $\yy\in \ghs$ and $B\in \pi_2(\xx, \yy)$.
Similarly,  define $m:\cfdhat\hs\to \Z/2$ by $m(\xx) = 0$ and
$$m(\yy) = m(\xx)+\fs(R(g(B))),$$
where $\yy\in \ghs$ and $B\in \pi_2(\xx, \yy)$. 
\end{defn}
This is well defined since it factors through the set gradings from Section \ref{ssec:refined}.

In some cases there is a natural choice of a base generator for each $\mathrm{spin}^c$ structure so that $m$ agrees with the absolute Maslov grading after gluing. We do not discuss these choices here.

\section{The Grothendieck group of $\Z/2$-graded $\az$-modules}\label{k0sec}

Let $\mathcal C$ be a triangulated category \cite{ve}. The \emph{Grothendieck group $K_0(\mathcal C)$} is defined as the abelian group with generators $[P]$ for each isomorphism class $P\in \mathcal C$
and relations  $[P] = [P']+[P'']$ for each distinguished triangle  $P'\to P\to  P''\to P'[1]$.

\example The most familiar and basic example is the homotopy category of $\Z$-graded chain complexes over $\Z$ (the translation functor $[1]$ simply shifts the complex, and the distinguished triangles are the mapping cones). The reader can verify that the Grothendieck group is $\Z$, via $[C] = \chi(C)$.

As mentioned in Section $1$, specific examples have been studied in low dimensional topology, namely, the image in the corresponding Grothendieck groups of the homology theories with $\Z$ or $\Z/2$ coefficients $\sfh$, $\kh$, $\hf$, $\hfk$ \cite{dsfh, kh1, osz14, hfk}.

We proceed to define our main object of interest, the Grothendieck group  of a differential graded algebra. 
Given a differential graded algebra $A$, we recall the definition of $K_0(A)$ (see \cite{kh2}, for example). Let $\mathcal K(A)$ be the homotopy category of modules over $A$, $\mathcal{KP}(A)$  the full subcategory of projective modules, and $\mathcal P(A)\subset \mathcal{KP}(A)$  the full subcategory of compact projective modules, which is a triangulated category.  

\begin{defn}\label{k0def}
Given an algebra $A$ with differential grading by $\Z$ or $\Z/2$, we define $K_0(A)$  to be the Grothendieck group  of  the category $\mathcal P(A)$. It has
\begin{itemize}
\item generators: $[P]$ over all compact projective differential graded $A$-modules $P$
\item relations: \begin{enumerate}\item $[P_2] = [P_1]+[P_3]$ whenever $P_1 \to P_2 \to P_3$ is a distinguished triangle 
\item $ [P[1]] = -[P]$, where $[1]$ is the grading  shift by $1$
\item If we also introduce an additional $\frac{1}{2}\Z$-grading along with the relation
$[P\{k\}] = t^k [P]$, where $\{k\}$ denotes the grading shift up by $k\in \frac{1}{2}\Z$, we get a $\Z[t^{\frac{1}{2}}, t^{-\frac{1}{2}}]$-module for $K_0$.
 \end{enumerate}
\end{itemize}
\end{defn}

Note that relation (3) is usually seen in reference to a $\Z$-grading, but we find it more convenient in the bordered Floer homological context to work with a $\frac{1}{2}\Z$-grading, which we define and study in Section \ref{alexsec}.

By \cite{bimod}, there is an equivalence of categories of bounded left type $D$ structures and right $\ainf$-modules, and along with \cite[Corollary 2.3.25]{bimod}, we see that we can work either in the homotopy category of finitely generated projective modules or finitely generated bounded type $D$ structures. We will prove Theorem \ref{intro_k0} for left type $D$ structures. The same result for right $\ainf$-modules follows by the equivalence of categories.

For the rest of this section, we will often write $\cala$ for $\az$.

Recall that a type $D$ structure $N$ over $\cala$ is called \emph{bounded} if for all $x\in N$ there is some integer $n$, so that for all $i\geq n$, $\delta_i(x) = 0$. Note that for a finitely generated $N$ we can find a universal $n$, so that for all $x$ and all $i\geq n$, $\delta_i(x) = 0$. By \cite[Proposition 2.3.10]{bimod}, every type $D$ structure over $\cala$ is homotopy equivalent to a bounded one, by tensoring it with a left and right bounded $\mathit{AD}$ identity bimodule. Furthermore, finiteness is preserved, since the identity module   in the proof of \cite[Proposition 2.3.10]{bimod} is finitely generated. 
In the homotopy category, homotopy equivalent type $D$ structures map to the  same symbol in  $K_0$, so it is enough to study finitely generated bounded type $D$ structures.

\note In the special case of $\cfdhat$ of a 3-manifold, the bounded type $D$ structures are the ones coming from admissible Heegaard diagrams. \\

\begin{defn}
Let $M$ be a right $\ainf$-module over $\cala$. We say that $x\in M$ is \emph{$\mathcal I$-homogeneous} if there is a unique indecomposable idempotent, which we denote by $I_A(x)$, that acts  on $x$ by the identity, and all other indecomposable idempotents act trivially on $x$. We denote by $o(x)$ the subset of $[2k]$ for which $I_A(x)=I(o(x))$.
\end{defn}

Recall that a left type $D$ structure over $\cala$ is an $\mathcal I$-module equipped with a type $D$ structure map, so we can talk about an action on $N$ by $\mathcal I$ too. 
\begin{defn}
Let $N$ be a left type $D$ structure over $\cala$.
We say that $x\in N$ is \emph{$\mathcal I$-homogeneous} if there is a unique indecomposable idempotent, which we denote by $I_D(x)$, that acts  on $x$ by the identity, and all other indecomposable idempotents act trivially on $x$. We denote by $\bar o(x)$ the subset of $[2k]$ for which $I_A(x)=I(\bar o(x))$.
\end{defn}

From now on we work with type $D$ structures. 

\begin{lemma}\label{lem:ihom}
Let $N$ be a finitely generated $\Z/2$-graded type $D$ structure over $\cala$. Then $N$ has a finite set of generators that are homogeneous with respect to the grading and $\mathcal I$-homogeneous. 
\end{lemma}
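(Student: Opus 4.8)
The plan is to split the task into two independent reductions: first arrange a homogeneous (with respect to the $\Z/2$-grading $m$) basis, then arrange that each basis element is $\mathcal I$-homogeneous, and finally observe that the two conditions can be imposed simultaneously because $\mathcal I$ is concentrated in a single $\Z/2$-degree. Throughout, $N$ is a type $D$ structure over $\cala = \az$, so as an $\mathcal I(\zz)$-module it is just a finite-dimensional $\F_2$-vector space with commuting idempotent actions.

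\textbf{Step 1: $\mathcal I$-homogeneity.} The ring of idempotents $\mathcal I(\zz)$ has $\F_2$-basis $\{I(\sss)\}_{\sss\subset[2k]}$, and these are orthogonal idempotents summing to the unit $\bf I$. Hence $N = \bigoplus_{\sss} I(\sss)\,N$ as an $\F_2$-vector space, and choosing an $\F_2$-basis of each summand $I(\sss)N$ produces a finite set of generators each of which is $\mathcal I$-homogeneous: if $x\in I(\sss)N$ then $I(\sss)$ acts by the identity on $x$ and every other $I(\ttt)$ acts by $0$. So a finite $\mathcal I$-homogeneous generating set always exists.

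\textbf{Step 2: compatibility with the $\Z/2$-grading.} Here I would use that the grading $m$ on $N$ from Section \ref{z2sec} is a grading \emph{by $\Z/2$}, i.e. a vector-space decomposition $N = N_0 \oplus N_1$ with $m$ constant on each summand, and that the idempotent action preserves it: since every idempotent $I(\sss)\in\cala$ has $m(I(\sss))=0$ (idempotents are homogeneous of degree $0$ because $g(I(\sss))=0$ or, more directly, $\gr(I(\sss)) = \psi(\sss)\gr'(I(\sss))\psi(\sss)^{-1}$ has trivial $spin^c$-component and Maslov component $0$), the action of each $I(\sss)$ maps $N_j$ to $N_j$. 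Therefore each idempotent summand refines as $I(\sss)N = (I(\sss)N\cap N_0)\oplus (I(\sss)N\cap N_1)$, and picking an $\F_2$-basis of each of these finitely many pieces gives a finite generating set of elements that are simultaneously $m$-homogeneous and $\mathcal I$-homogeneous. Finiteness is immediate since $N$ is finitely generated over $\F_2$.

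\textbf{Main obstacle.} The only real content is the claim that the idempotents act with $\Z/2$-degree $0$, i.e. that $m$ restricted to $\mathcal I(\zz)$ is identically $0$; everything else is linear algebra over $\F_2$. I would dispatch this either by the direct computation $f\circ\gr(I(\sss)) = f(\text{identity of }G(\zz)) = 0$ (the refined grading of an idempotent is the identity element, up to the conjugation in its definition, which does not change $f$ since $f$ is a homomorphism to an abelian group), or by noting that $I(\sss)$ is a cycle for the differential and is degree $0$ in $\gr'$ before refinement. Once that is in hand, the lemma follows by simply intersecting the $\mathcal I$-eigenspace decomposition with the two-term $\Z/2$-decomposition and choosing bases.
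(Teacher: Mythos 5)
Your proposal is correct and takes essentially the same approach as the paper: both arguments rest on the two facts that the primitive idempotents $I(\sss)$ are orthogonal and sum to $\mathbf{I}$ (yielding the decomposition $N=\bigoplus_\sss I(\sss)N$, or equivalently $x=\sum_\sss I(\sss)x$), and that the idempotents act with $\Z/2$-degree $0$ so this decomposition is compatible with the $m$-grading. The paper phrases it by starting from a finite homogeneous generating set and multiplying each generator by each $I(\sss)$, whereas you decompose $N$ itself into idempotent summands and then intersect with $N_0$ and $N_1$, but these are the same argument reorganized; the only difference is that the paper treats the degree-$0$ action of $\mathcal I$ as immediate ("clearly"), while you spell out the short computation $\gr(I(\sss))=\psi(\sss)\gr'(I(\sss))\psi(\sss)^{-1}=e$, hence $f\circ\gr(I(\sss))=0$, which is a fine (if arguably already built into the definition of a graded module over a graded algebra) verification.
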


\begin{proof}
This is a direct consequence of the fact that $\mathcal I\cong \Z/2^{2^{2k}}$, but we spell it out anyway. 
By definition, there is a finite generating set $\mathfrak S$ for  $N$ that is homogeneous with respect to the grading. Let $x\in \mathfrak S$. Then $I(\sss)x$ is $\mathcal I$-homogeneous, since the indecomposable idempotents are orthogonal, i.e. $I(\sss)I(\ttt)=0$ whenever $\sss\neq \ttt$. Clearly, $I(\sss)x$ is also homogeneous with respect to the grading. Last, observe that $x = {\bf I}x = \sum_{\sss\in [2k]}I(\sss)x$, so the set 
$\{I(\sss)x | x\in \mathfrak S, \sss\in [2k]\}$
generates  $N$. 
\end{proof}

Given a pointed matched circle $\zz$ for a surface $F$, let $\cala :=\az$. Recall that  in Section \ref{z2sec} we defined $\rho_i$ as the Reeb chord connecting the two points in $M^{-1}(i)$. The elements $[a(\rho_i)]$ generate $H_1(F;\Z)$. Denote the initial endpoint of $\rho_i$ by $\rho_i^-$ and the final endpoint  by $\rho_i^+$, and order the basis $\{[a(\rho_i)]| i\in [2k]\}$ so that $[a(\rho_i)]<[a(\rho_j)]$ if and only if $\rho_i^-$ comes before $\rho_j^-$ when we follow the orientation of the circle starting at $z$. In other words, we order the basis of $H_1(F; \Z)$ according to the order of the initial endpoints of the corresponding Reeb chords, where this order is induced by the orientation of the circle. Call the ordered basis $a_1<a_2<\cdots<a_{2k}$.

Given a set $\sss \subset [2k]$, let $J(\sss)$ be the multi-index $(j_1, \ldots, j_n)$, so  that $1\leq j_1<\ldots < j_n\leq 2k$ and $\{j_1, \ldots, j_n\} = \sss$. Given a multi-index $J = (j_1, \ldots, j_n)$ of increasing numbers as above, define $a_J = a_{j_1}\wedge\ldots \wedge a_{j_n}$. We will use the shortcut notation $a_{\sss}:= a_{J(\sss)}$. Note that $a_{\sss}$ form a basis for $\Lambda^\ast(H_1(F; \Z))$. 

Suppose $M$ is a right $\ainf$-module over $\cala$ with a set of  $\mathcal I$-homogeneous generators $\mathfrak S(M)$. 
Define a function $h:\mathfrak S(M)\to \Lambda^\ast H_1(F; \Z)$ by $h(x) =  a_{o(x)}$. Similarly, if $N$ is a left type $D$ structure with a set of $\mathcal I$-homogeneous generators $\mathfrak S(N)$, define $h:\mathfrak S(N)\to \Lambda^\ast H_1(F; \Z)$ by $h(x) = a_{\bar o(x)}$. Recall that for the structure $\cfahat$ associated to a Heegaard diagram, $I_A(x)$ is the idempotent corresponding to the $\alpha$ arcs occupied by $x$, and for $\cfdhat$, $I_D(x)$ is the idempotent corresponding to the unoccupied $\alpha$ arcs, and in either case $h(x)\in \Lambda^k(H_1(F; \Z))$.

Below is the full version of Theorem \ref{intro_k0} and its proof. 

\begin{theorem}\label{k0full}
Let $\zz$ be a pointed matched  circle with associated surface $F$ of genus $k$. The Grothendieck group of the category of finitely generated $\Z/2$-graded left type $D$ structures over $\az$  is equivalent to that of finitely generated right $\cala_\infty$-modules over $\az$ and is given by
$$K_0(\az) = \Lambda^\ast(H_1(F; \Z)).$$ 
Moreover, if $M$ is a (finitely generated) left type $D$ structure or a right $\cala_\infty$-module over $\az$ with $\Z/2$ grading $m$, then its image in this group can be computed by 
$$[M] = \sum _{x \in \mathfrak S(M)} (-1)^{m(x)}h(x),$$
 where $\mathfrak S(M)$ is a set of homogeneous generators and
  $h(x)$, as defined above,  is the wedge  in $\Lambda^\ast(H_1(F; \Z))$ of generators of $H_1(F; \Z)$ given by the set of matched points in $\zz$ corresponding to $I_A(x)$ for $\ainf$-modules, or to $I_D(x)$ for type $D$ structures (with order induced by the orientation of the circle). In other words, $[M]$ counts generators of $M$ in each primitive idempotent. 
\end{theorem}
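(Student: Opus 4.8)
The plan is to compute $K_0(\cala)$ directly from the structure of the category $\mathcal P(\cala)$ of compact projective differential graded modules, and then to verify the Euler characteristic formula by reducing to a generating set.

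\textbf{Step 1: Identify the indecomposable projectives.} Since $\cala = \az$ is a differential graded algebra over the ground ring $\mathcal I(\zz) \cong \bigoplus_{\sss} \F_2$, the indecomposable compact projective modules (up to homotopy and grading shift) are the summands $P_{\sss} := \cala \cdot I(\sss)$, one for each idempotent $I(\sss)$, i.e. one for each $k$-element subset $\sss\subset[2k]$ that arises as a section. By the standard argument for homotopy categories of dg modules over a dg algebra whose homology is concentrated in appropriate degrees, every object of $\mathcal P(\cala)$ is, in the Grothendieck group, a $\Z$-linear combination of the $[P_{\sss}]$ with signs coming from relation (2) (the shift $[P[1]] = -[P]$). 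The first task is therefore to show that $\{[P_{\sss}]\}$ generates $K_0(\cala)$ and to count the relations among them.

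\textbf{Step 2: Show the $[P_{\sss}]$ are free generators.} Here one must check there are \emph{no} nontrivial relations: a distinguished triangle $P_1\to P_2\to P_3$ among compact projectives, after passing to homology or to the ``rank in each idempotent'' invariant, can only produce $[P_2]=[P_1]+[P_3]$, and these are already accounted for by writing everything as a combination of the $[P_{\sss}]$. The cleanest route is to exhibit a homomorphism $K_0(\cala)\to \Lambda^\ast(H_1(F;\Z))$ and an inverse: send $[P_{\sss}] \mapsto a_{\sss}$ (recall $a_{\sss}$ is the wedge of basis elements indexed by $\sss$, and these form a basis of $\Lambda^\ast H_1(F;\Z)$, which has rank $2^{2k}$). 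The number of idempotents $I(\sss)$, $\sss$ a $k$-subset of $[2k]$, is $\binom{2k}{k}$ — but here one uses that the sections over \emph{all} $\sss\subset[2k]$ (not just $k$-subsets) give the full set of primitive idempotents of $\mathcal I(\zz)$, matching the $2^{2k}$ basis elements $a_{\sss}$ of $\Lambda^\ast H_1(F;\Z)$. So $K_0(\cala)\cong \Z^{2^{2k}}\cong \Lambda^\ast(H_1(F;\Z))$, with the isomorphism pinned down by $[P_{\sss}]=a_{\sss}$. Via \cite{bimod} the same group computes $K_0$ for bounded type $D$ structures, so we may work with those.

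\textbf{Step 3: The Euler characteristic formula.} Given a finitely generated $\Z/2$-graded type $D$ structure $N$, use Lemma \ref{lem:ihom} to pick a finite generating set $\mathfrak S(N)$ of generators that are simultaneously grading-homogeneous and $\mathcal I$-homogeneous. As a module over $\mathcal I(\zz)$ (forgetting $\delta$), $N$ is free with these generators, so the underlying projective module is $\bigoplus_{x\in\mathfrak S(N)} P_{\bar o(x)}[\text{shift by }m(x)]$ — but one must be careful: $N$ itself is not projective, rather $\cala\otimes_{\mathcal I} N$ with its induced differential is the relevant compact projective object, and it is filtered (by the $\delta_k$ terms raising the "amount of algebra") with associated graded $\bigoplus_x \cala I(\bar o(x))[m(x)]$. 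Passing to $K_0$, the filtration gives $[N] = \sum_{x\in\mathfrak S(N)} (-1)^{m(x)} [P_{\bar o(x)}] = \sum_{x} (-1)^{m(x)} a_{\bar o(x)} = \sum_x (-1)^{m(x)} h(x)$, using relation (2) for the odd shifts. For $\ainf$-modules one either runs the parallel argument with $o(x)$ in place of $\bar o(x)$, or transports along the equivalence of categories. Finally, one checks independence of the choice of $\mathfrak S(N)$: two homogeneous $\mathcal I$-homogeneous generating sets related by the moves of Lemma \ref{lem:ihom} (and by homotopy equivalence) give the same sum, which follows because $[N]$ is a homotopy invariant and the class in $K_0$ does not see the $\delta_k$ with $k\geq 1$ beyond the filtration bookkeeping.

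\textbf{Main obstacle.} The delicate point is Step 2 — proving there are no hidden relations, i.e. that the map $\Lambda^\ast H_1(F;\Z) \to K_0(\cala)$ sending $a_{\sss}\mapsto [P_{\sss}]$ is injective. This requires knowing that the compact projective generators $P_{\sss}$ are pairwise non-homotopy-equivalent and that the only triangles are the split ones, which in turn rests on the local structure of $\cala$ near its idempotents (e.g. that $H_0$ of the endomorphism algebra of $\bigoplus_{\sss}P_{\sss}$ has the expected idempotent decomposition). A secondary subtlety is the sign ``up to an overall sign'' caveat present in the related Theorem \ref{intro_tensor}: the isomorphism $K_0(\cala)\cong\Lambda^\ast H_1(F)$ depends on the chosen base generators / grading refinement data, and one should be explicit that the formula for $[M]$ holds once a base generator in each $\spinc$ structure is fixed (so that $m$ is defined), with different choices changing $[M]$ only by sign on each summand.
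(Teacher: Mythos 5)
Your overall strategy matches the paper's: show that $K_0(\cala)$ is generated by the elementary projectives $P_{\sss} = \cala I(\sss)$, identify these with the basis elements $a_{\sss}$ of $\Lambda^\ast H_1(F;\Z)$, and deduce the Euler characteristic formula. Step~3 is also morally the same as the paper's dévissage. However, there are two genuine gaps.

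\textbf{Step 2 is incomplete and you acknowledge it.} You correctly identify that the crux is constructing a well-defined homomorphism $K_0(\cala)\to \Lambda^\ast H_1(F;\Z)$ (equivalently, that writing $[P_{\sss}]\mapsto a_{\sss}$ is injective), but you do not actually construct such a map; you only sketch a harder route through the endomorphism algebra of $\bigoplus_{\sss}P_{\sss}$. The paper's solution is more direct: the positive-degree part $\cala_+$ (graded by total support in $H_1(Z,\mathbf a)$) is an augmentation ideal, and the augmentation functor $M\mapsto M/\cala_+M$ from $\mathcal P(\cala)$ to $\mathcal P(\mathcal I)$ sends distinguished triangles to distinguished triangles and hence induces the required map $K_0(\cala)\to K_0(\mathcal I)\cong\Z^{2^{2k}}$. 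Without this (or some equivalent) construction, you have only shown $K_0(\cala)$ is a quotient of $\Lambda^\ast H_1(F;\Z)$, not that it equals it.

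\textbf{Step 3's filtration is not clearly specified.} The phrase ``filtered by the $\delta_k$ terms raising the amount of algebra'' does not describe a well-defined filtration on $\cala\otimes_{\mathcal I}N$ (the differential on that object involves only $\delta_1$, not the higher $\delta_k$). The correct observation, which the paper makes explicit and which is essential, is that for a bounded type $D$ structure there exists an ordering of the $\mathcal I$-homogeneous generators making the matrix of $\delta$ strictly upper triangular; the resulting filtration by ``position in the ordering'' is what yields the associated graded $\bigoplus_x\cala I(\bar o(x))[m(x)]$, or equivalently the inductive sequence of distinguished triangles $\cala I_D(x)[m(x)]\to M\to M/\cala x$ that the paper uses. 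Your Step~1 invocation of a ``standard argument for dg algebras whose homology is concentrated in appropriate degrees'' is likewise not what is used here --- boundedness of the type $D$ structure is the operative hypothesis, and it is needed both to find a cycle generator to start the induction and to guarantee the filtration is finite.
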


\begin{proof}

Suppose that 
$M$ is a finitely generated bounded type $D$ structure. By Lemma \ref{lem:ihom}, $M$ has a finite  set of generators that are homogeneous with respect to the grading and $\mathcal I$-homogeneous. 
We show that then $[M]$ is a linear combination of symbols of elementary projectives, i.e., type $D$ structures of form $\cala I(\sss)$. We use induction on the size of the generating set. Clearly, if $M$ has only one generator, it is an elementary projective, since, because of the boundedness condition, the  differential must be zero. 
Otherwise, if we fix an ordering of the generators $x_1, \ldots, x_n$, we can represent the type $D$ map $\delta$ by the matrix formed by the coefficients of $\delta x_i = \sum_j a_{ij} x_j$. Observe that $M$ being bounded is equivalent to the existence of an ordering on the generators that yields  an upper triangular differential matrix with zeros on the diagonal (see Figure \ref{bdd} for example).

\begin{figure}[h]
 \centering
       \labellist
       \pinlabel $z$ at 30 150
       \pinlabel $1$ at 150 150
       \pinlabel $2$ at 150 30
       \pinlabel $3$ at 30 30
       \pinlabel $x_1$ at 120 -15
       \pinlabel $x_2$ at -15 90
       \pinlabel $x_3$ at 70 -15
       \pinlabel $x_1$ at 250 160
       \pinlabel $x_2$ at  340 90
       \pinlabel $x_3$ at 250 20
       \pinlabel $a(\rho_2)$ at 325 145
        \pinlabel $a(\rho_1)$ at 325 40
        \pinlabel $1$ at 240 90
              \pinlabel {$\begin{bmatrix} 
              0 & a(\rho_2) & 1\\
              0 & 0 & a(\rho_1)\\
              0 & 0 & 0\\
  \end{bmatrix} $} at 530 90
            \endlabellist
            \advance\leftskip-5cm
       \includegraphics[scale=.5]{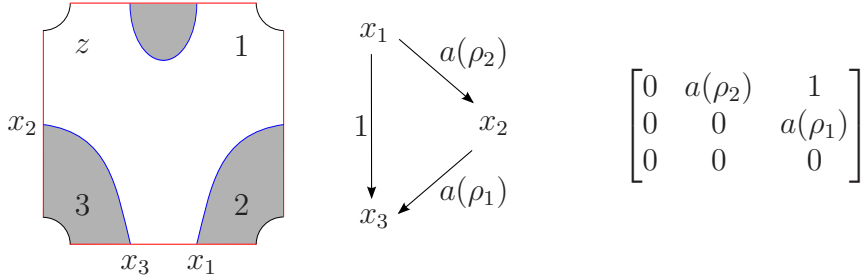}
       \vskip .2 cm
       \caption{An example of a bounded type $D$ structure. Left: A Heegaard diagram. The domains that contribute to $\delta$ are shaded. Center: The type $D$ structure for the diagram. Right: A matrix representation of the map $\delta$ with respect to the given ordering of the generating set. Note that the notation $\rho_i$ here comes from the labels on the diagram and carries a different meaning than in Section \ref{z2sec}.}
       \label{bdd}
\end{figure}

So chose an upper triangular differential matrix for $M$ and let $x$ be the generator corresponding to the bottommost row of the matrix. Then $x$ is a cycle, and, now thinking of $M$  as a left $\cala$-module, we have a distinguished triangle
$$\cala I_D(x)[m(x)]\to M\to M/\cala x ,$$
so $[M] = (-1)^{m(x)}[ \cala I_D(x)] + [M/\cala x]$. The matrix for $M/\cala x$ is obtained from the matrix for $M$ by removing the last row and column, hence $M/\cala x$ is bounded and can be generated by fewer elements than $M$, so by hypothesis $[M/\cala x]$ is the sum of symbols of elementary projectives. Thus, $[M]$ is of that form too. More precisely, applying the distinguished triangle process repeatedly by moving up the matrix,  we see that 
\begin{equation*}[M] = \sum_{x \in \mathfrak S(M)}(-1)^{m(x)} [\cala I_D(x)].
\end{equation*}

 So far we have shown that $K_0(\cala)$ is generated (over $\Z$) by the symbols of elementary projectives, , i.e. modules of the form  $\cala I(\sss)$. 

We now switch to the language of modules over $\cala$. 

The algebra $\cala=\az$ has a $\Z$-grading given by the total support of an element in $H_1(Z, \bf{a})$, i.e. the sum of the coefficients of the projection from $\gr'(a)\in G'(\zz)$ onto $H_1(Z, \bf{a})$. Since we only work with upward going and horizontal strands, this grading is in fact by non-negative numbers, and the degree $0$ part is precisely  the ground ring $\mathcal I$. The part of positive degree, call it $\cala_+$, is an augmentation ideal - dividing by it yields $\mathcal I$. This augmentation map $\cala\to \mathcal I$ induces a map of the categories of modules over $\cala$ and $\mathcal I$ by $M\mapsto M/\cala_+M$. More precisely, an $\cala$-module homomorphism $f:M\to N$ maps elements of $\cala_+M$ to $\cala_+N$, so it descends to an $\mathcal I$-module homomorphism $f':M/\cala+M\to N/\cala_+N$. Thus, distinguished triangles in $\mathcal P(\cala)$ are sent by the augmentation map to distinguished triangles in $\mathcal P(\mathcal I)$.  So a relation between the symbols of modules in $K_0(\cala)$ implies a relation between the corresponding symbols in $K_0(\mathcal I)$.

Note that there is a correspondence between primitive idempotents and elements of $\Lambda^\ast(H_1(F; \Z))$ given by 
$I(\sss)\to a_{\sss}$, so we can identify $K_0(\mathcal I) =K_0((\Z/2)^{2^{2k}})\cong\Z^{2^{2k}}$ with $\Lambda^\ast(H_1(F; \Z))$ via  $[\cala I(\sss)] = a_{\sss}$.
Thus,
\begin{equation*}
K_0(\cala)= \Lambda^\ast(H_1(F; \Z)).  
\end{equation*}

\end{proof}

\section{$K_0$ of Alexander-graded dg modules over $\az$}\label{alexsec}

In this section, we define a relative $\frac{1}{2}\Z$-grading $a$ on the dg algebra $\az$, as well as on the left type $D$ structures and right $\cala_{\infty}$-modules over $\az$. We refer to it as \emph{Alexander grading}, and show that it has properties similar to the Alexander grading on $\cfkhat$ for links in closed $3$-manifolds. For now we restrict to the case of torus boundary, and define an Alexander grading for $\cfdhat$ of knot complements and for $\cfahat$ of knots in the solid torus.

The pointed matched  circle $\zz = \zz(T^2)$ for a torus is depicted in in Figure \ref{fig:zc}. Four points are matched into two pairs $\alpha_1$ and $\alpha_2$, and  divide the circle into the four upward-oriented arcs $\rho_0$, $\rho_1$, $\rho_2$, and $\rho_3$ (not to be confused with the $\rho_i$ in Section \ref{z2sec}), where $\rho_0$ contains the basepoint $z$. The algebra $\az$ has two idempotents, one for each $\alpha_i$, and $6$ Reeb elements, coming from the Reeb chords $\rho_1, \rho_2$ and $\rho_3$ (also see  \cite[Section 11.1]{bfh2}). 

\psfrag{a1}{$\alpha_1$}
\psfrag{a2}{$\alpha_2$}
\psfrag{r0}{$\rho_0$}
\psfrag{r1}{$\rho_1$}
\psfrag{r2}{$\rho_2$}
\psfrag{r3}{$\rho_3$}
\psfrag{zz}{$z$}
\psfrag{z}{$^z$}

\begin{figure}[h]
\centering
 \includegraphics[scale=.5]{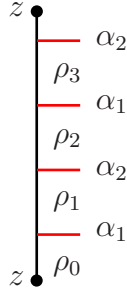}
  \caption{The circle $\zz$ in the case of torus boundary}
  \label{fig:zc}
\end{figure}

Recall that the unrefined grading on the torus algebra takes values in the group $G'$ which consists of quadruples $(j; r_1, r_2, r_3)$, with $j\in \frac{1}{2}\Z$, $r_i\in \Z$, and $j$ is an integer if $r_2$ and $r_1-r_3$ are even. There is also a refined grading by a subgroup $G$, different from $G(\zz)$, which consists of triples $(j; q_1, q_2)$ with $j, q_1, q_2 \in \frac{1}{2}\Z$ and $q_1+q_2\in \Z$. For the group law on $G'$ and $G$ and further details, see   \cite[Section 11.1]{bfh2}. 

We have a surjection 
\begin{align*}
G' & \twoheadrightarrow H_1(Z, \bf a)\\
(j; r_1, r_2, r_3) & \mapsto (r_1, r_2, r_3)
\end{align*}
As with the refined grading group $G(\zz)$, we can think of $G$ as a $\Z$ central extension of $H_1(T^2; \frac{1}{2}\Z)$. In other words, we have a map 
\begin{align*}
G & \to H_1(T^2; \Q)\\
(j; q_1, q_2) & \mapsto (q_1, q_2)
\end{align*}
with image inside $\frac{1}{2}\Z\times \frac{1}{2}\Z$ and kernel generated by $\lambda = (1; 0, 0)$.

Let $K$ be a knot in $S^3$ and let $\cfdhat(K, n)$ be a type $D$ structure for the $n$-framed complement $S^3\setminus K$. Fix a generator $x$ and recall that we can choose the sign of the generator $B$ of $\pi_2(x, x)$ so that $B$ does not cover the basepoint $z$, and has multiplicities $1, n+1, n$ at  the regions corresponding to $\rho_1, \rho_2, \rho_3$, respectively.

We can define the Alexander grading in two ways: by composing $\gr'$ with map from $G'$ to $\frac{1}{2}\Z$, or by composing $\gr$ with a map from the subgroup $G$ to $\frac{1}{2}\Z$. The two maps to $\frac{1}{2}\Z$ are obtained by composing the two maps above with 
\begin{align*}
H_1(Z, \bf a) & \to \textstyle \frac{1}{2}\Z\\
 (r_1, r_2, r_3) & \mapsto  \textstyle{\frac{n+1}{2}r_1 + \frac{n-1}{2}r_2 + \frac{-n-1}{2}r_3}
\end{align*}
and
\begin{align*}
H_1(T^2; \Q) & \to  \textstyle\frac{1}{2}\Z\\
 (q_1, q_2) & \mapsto nq_1-q_2. 
\end{align*}
The gradings $g'(B)$ and $g(B)$ are in the kernel of the respective maps, so we get maps from the quotients $G'/P(x)$ and $G/P(x)$ to $\frac{1}{2}\Z$, i.e. an \emph{Alexander grading} on $\cfdhat(K, n)$ by $\frac 12\Z$, which we denote by $a$. 

 Also note that the two maps commute with the grading maps (i.e. transitioning between $G$ and $G'$), hence the two maps define the same grading on $\cfdhat(K, n)$ by $\frac 12\Z$. Note that this grading agrees with the function $S$ from  \cite[Equation 11.39]{bfh2}.

Next, we define the Alexander grading for a knot in the solid torus. One might like to just add the number of times we pass through the second basepoint of the Heegaard diagram to the Alexander grading above, but we also have to keep track of the homological class of the knot in the solid torus.

Given a $0$-framed solid torus and a knot $K$ in it with homology class $[p]$, fix a generator $x$ of $\cfahat(K)$, and note that we can choose the generator $B$ of $\pi_2(x,x)$ to have multiplicities $ 0, 1, 1$ at $\rho_1, \rho_2, \rho_3$ respectively, and to avoid the basepoint  $z$. This $B$ covers $w$ with multiplicity  $p$: the boundary of $B$ is a set of complete $\alpha$ and $\beta$ circles, and the complete arc $\alpha_1$. By capping off  all circles with the disks they bound, we get an immersed surface in the solid torus whose boundary is the meridian of the solid torus. In other words, the surface we obtain from $B$ is homologically  equivalent to  the disk $D^2$ (with positive orientation) that the meridian bounds, hence intersects the knot homologically $p$ any times. Hence, in the Heegaard diagram, $B$ covers the basepoint $w$ a total of $p$ many times. 
 
This time we have the algebra graded by $G'\times \{0\}$ or $G\times \{0\}$ as subgroups of $G'\times \Z$ or $G'\times \Z$, and corresponding maps to $H_1(Z, \bf a)\times \Z$ or $H_1(T^2; \Q)\times \Z$ defined as above on the first factor, and as the identity on the second.  Define the Alexander grading $a$ on the algebra as the composition of these maps with the maps to $\frac{1}{2}\Z$ given by $(r_1, r_2, r_3; d)\mapsto d - p(r_2+r_3)$ and $(q_1, q_2; d)\mapsto d- pq_2$ respectively.

The domain $B$ above has grading of the form $g'(B)= (\_; 0, 1, 1; p)$ in $G'$, or $g(B) = (\_; 0, 1; p)$ in $G$, and the grading on $\cfahat(K)$ takes values in $G'\times \Z/\langle g'(B)\rangle$ or $G\times \Z/\langle g(B)\rangle$, respectively. The gradings of $B$ are in the kernels of the maps, so we get well defined maps from the quotients $G'\times \Z/\langle g'(B)\rangle$ and  $G\times \Z/\langle g(B)\rangle$ to  $\frac{1}{2}\Z$, and also note that the two maps commute with the grading maps,  hence we have  a well defined grading on $\cfahat (K)$, which we also call the \emph{Alexander grading} and denote by $a$. 

Note that a $\frac{1}{2}\Z$-grading introduces relation (3) in Definition \ref{k0def}, and 
 the Grothendieck group for left type $D$ structures and right $\ainf$-modules over $\az$ becomes
$$K_0(\az_{gr}) \cong \Lambda^\ast(H_1(F; \Z))\otimes \Z[t^{1/2}, t^{-1/2}].$$

\section{Tensor products}\label{tensorsec}
We first show that as a relative grading, the grading $m$ defined in Section \ref{z2sec} agrees with the relative Maslov grading for closed manifolds. 

Let $Y_1$ and $Y_2$ be bordered $3$-manifolds which agree along their boundary, with Heegaard diagrams $\mathcal H_1$ and $\mathcal H_2$ which can be glued along their boundary $\zz = \bdy \mathcal H_1 = -\bdy \mathcal H_2$ to form a closed Heegaard diagram $\mathcal H = \mathcal H_1\cup_\bdy \mathcal H_2$ representing the closed $3$-manifold $Y = Y_1\cup_\bdy Y_2$.  Let $\mathfrak s\in \mathrm{spin}^c(Y)$, and let $\mathfrak s_i = \mathfrak s|_{Y_i}$. 

Let $u, v\in \cfahat(\mathcal H_1, \mathfrak s_1)$ and  $x, y\in \cfdhat(\mathcal H_2, \mathfrak s_2)$ be homogeneous elements with respect to the grading $\gr$, and suppose $u$ and $x$ occupy complementary $\alpha$-arcs, and so do $v$ and $y$, and $u\boxtimes x$ and $v\boxtimes y$ are in the same $\textrm{spin}^c$ structure $\mathfrak s$.

\begin{proposition}\label{maslov}
Let $t$ be the relative Maslov grading mod $2$ between $u\boxtimes x$ and $v\boxtimes y\in \cfhat (\mathcal H, \mathfrak s)$ (see \cite[Theorem 10.42]{bfh2}). Then
$$[m(u) + m(x)]-[m(v)+m(y)] = t \text{\emph{ mod} } 2$$ 
\end{proposition}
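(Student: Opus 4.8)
The plan is to unwind both sides of the claimed equation to the common combinatorial currency of domains, and to exploit the multiplicativity of $g$ and the homomorphism property of $f$ (equivalently $\fs$).

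\textbf{Step 1: Reduce to domains.}
By the definition of the relative grading $m$ on $\cfahat(\mathcal H_1,\mathfrak s_1)$ and on $\cfdhat(\mathcal H_2,\mathfrak s_2)$ given in Section \ref{z2sec}, there are domains $B_1\in\pi_2(u,v)$ and $B_2\in\pi_2(x,y)$ (after possibly passing through the chosen base generators, which contributes nothing since $m$ factors through the set gradings) with
$$m(v)-m(u)=m(B_1)=f\circ g(B_1),\qquad m(y)-m(x)=\fs(R(g(B_2))).$$
Since $R$ is the map $(j,\alpha)\mapsto(j,r_\ast\alpha)$ on $G(\zz)$ and $f=\fs$ depends only on the Maslov component modulo $2$ plus a correction built from the $\spinc$ component, one checks (as in the remark after the proof of Theorem \ref{domains}) that $f\circ R=f$ on $G(\zz)$, or more precisely that the two contributions combine correctly; so the left-hand side of the Proposition is $f\circ g(B_1)+f\circ g(B_2)\bmod 2$. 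The first obstacle, and the one I expect to be mildly delicate, is keeping the two different refinement conventions straight: $\cfahat(\mathcal H_1)$ is graded over $\cala(\bdy\mathcal H_1)$ and $\cfdhat(\mathcal H_2)$ over $\cala(-\bdy\mathcal H_2)$, and the pairing in Section \ref{ssec:refined} requires the refinement on one to be the \emph{reverse} of the refinement on the other; I would fix those choices at the outset exactly as in the tensor-product grading discussion so that $g(B_1)$ and $R(g(B_2))$ live in the same group $G(\zz)$ and multiply.

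\textbf{Step 2: Compare with the glued Maslov grading.}
On the other side, $u\boxtimes x$ and $v\boxtimes y$ lie in $\cfhat(\mathcal H,\mathfrak s)$, and their relative Maslov grading $t$ is, by \cite[Theorem 10.42]{bfh2}, the exponent $s$ with $\gr^\boxtimes(v\boxtimes y)=\lambda^s\gr^\boxtimes(u\boxtimes x)$, read modulo $\mathrm{div}(c_1(\mathfrak s))$ — and in particular modulo $2$. Now $\gr^\boxtimes(u\boxtimes x)$ is the double coset $P_1(\xx_1)\,g(B_1')\,R(P_2(\xx_2))$ for appropriate domains from the base generators, and the relation $\gr^\boxtimes(v\boxtimes y)=\lambda^t\gr^\boxtimes(u\boxtimes x)$ unwinds, using multiplicativity of $g$ under concatenation, to $g(B_1)\cdot R(g(B_2))=\lambda^t\cdot(\text{element of }P_1(\xx_1))\cdot(\text{element of }R(P_2(\xx_2)))$ in $G(\zz)$. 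Here is where Theorem \ref{domains} does the work: applying $f=\fs$ to both sides, the periodic-domain factors die ($\fs(g(B))=0$ and $\fs(R(g(B)))=0$ for $B$ periodic, by Theorem \ref{domains} and the remark following it), $\fs(\lambda^t)=t\bmod 2$, and the left side becomes $f\circ g(B_1)+f\circ R(g(B_2))$ by the homomorphism property. Comparing with Step 1 gives
$$[m(u)+m(x)]-[m(v)+m(y)]=-\big(f\circ g(B_1)+f\circ g(B_2)\big)=t\bmod 2,$$
the sign being irrelevant mod $2$.

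\textbf{Main obstacle.}
The genuinely substantive input is Theorem \ref{domains}: without it, $f$ applied to the periodic-domain ambiguity in the double-coset identity would not be well-defined, and the whole argument collapses. Everything else is bookkeeping — matching the reverse-refinement conventions of Section \ref{ssec:refined} so that $g(B_1)$ and $R(g(B_2))$ can be multiplied, and checking $f\circ R=f$ so that the type-$D$ side's grading contributes $f\circ g(B_2)$ rather than something twisted. I would present Steps 1 and 2 in that order, isolating the conventions lemma first so the final computation is a clean two-line application of the homomorphism property together with Theorem \ref{domains}.
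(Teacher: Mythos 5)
Your proposal is correct and follows essentially the same strategy as the paper's proof: unwind the double-coset equality from Theorem 10.42 of \cite{bfh2} into a genuine equation in $G(\zz)$ involving elements of $P_1(\xx_1)$ and $R(P_2(\xx_2))$, apply the homomorphism $f$, and let Theorem \ref{domains} (and the remark after it) annihilate the periodic-domain factors. The paper works directly with representatives $g_u, g_v, g_x, g_y$ of the set-valued gradings rather than with connecting domains $B_1, B_2$, which avoids the detour through $f\circ R = f$: in the definition of $m$ on $\cfdhat$, the quantity $\fs(R(g(B_2)))$ is already the natural currency, and the $R$ is carried along to the end rather than stripped off — so the claim $f\circ R = f$ you flag as ``mildly delicate'' is in fact unnecessary if you keep $R$ in play on the type-$D$ side throughout (only $\fs(R(g(B))) = 0$ for $B$ periodic is needed, which is exactly the note after Theorem \ref{domains}).
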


\begin{proof}
Recall that $\cfahat(\mathcal H_1, \mathfrak s_1)$ is graded by $P_1(x_1)\backslash G(\zz)$, where $x_1\in \mathfrak S(\mathcal H_1, \mathfrak s_1)$ is a chosen base generator, and $P_1(x_1)$ is the image of $\pi_2(x_1, x_1)$ in $G(\zz)$.  Similarly, $\cfdhat(\mathcal H_2, \mathfrak s_2)$ is graded by $G(\zz)/R(P_2(x_2))$, where $x_2\in \mathfrak S(\mathcal H_2, \mathfrak s_2)$ is a chosen base generator and $P_2(x_2)$ is the image of $\pi_2(x_2, x_2)$ in $G(\zz)$.  Recall that the map $f$ from Section \ref{z2sec} maps any element of $P_1(x_1)$ or $R(P_2(x_2))$ to zero. 

By \cite[Theorem 10.42]{bfh2}, 
$$\gr^{\boxtimes}(u\boxtimes x) = \lambda^t \gr^{\boxtimes}(v\boxtimes y)\in P_1(x_1)\backslash G(\zz)/R(P_2(x_2)).$$
This means that if we fix representatives $g_u, g_v, g_x, g_y\in G(\zz)$ so that $[g_u] = \gr(u)\in P_1(x_1)\backslash G(\zz)$, etc., then 
$$[g_u g_x] = [\lambda^t g_v g_y]\in P_1(x_1)\backslash G(\zz)/R(P_2(x_2)).$$
Then there exist $h_1\in P_1(x_1)$ and $h_2\in R(P_2(x_2))$, such that 
$$h_1 g_u g_x h_2 = \lambda^t g_v g_y\in G(\zz).$$
Applying $f$ to both sides, we see that
$$f(h_1 g_u g_x h_2) = f(\lambda^t g_v g_y),$$
so 
\begin{align*}
0 &= f(h_1 g_u g_x h_2) - f(\lambda^t g_v g_y)\\
 &= f(h_1)+f(g_u)+ f(g_x) + f(h_2) - tf(\lambda) - f(g_v) -  f(g_y)\\
 &=  f(g_u)+ f(g_x) - t - f(g_v) -  f(g_y)\\
 &= \bar f (\gr(u)) +\bar f(\gr(x)) - t -\bar f(\gr(v)) - \bar f(\gr(y))\\
 &= m(u) + m(x) - t - m(v) - m(y).\qedhere
\end{align*}
\end{proof}

A similar statement can be made about the Alexander grading on the manifolds studied in Section \ref{alexsec}.
Let $\mathcal H_1$ be a Heegaard diagram for a knot in a $0$-framed solid torus $(S^1\times D^2, C)$, and let $\mathcal H_2$ be a Heegaard diagram for a $0$-framed knot complement $(S^3\setminus K, 0)$, so that $\mathcal H = \mathcal H_1\cup_\bdy \mathcal H_2$ is a Heegaard diagram for $(S^3, K_C)$. Recall that there is only one $\textrm{spin}^c$ structure for each of the bordered manifolds and for $S^3$. Also recall that for knots, we enhance the grading group $G$ to  $G\times \Z$, and in this case $\cfahat(\mathcal H_1)$ is graded by the coset space $\ha\backslash G$, and $\cfdhat(\mathcal H_2)$ is graded by a coset space $G/\hd$, where $\ha$ and $\hd$ are the images in $G$ of the the groups of periodic domains of $\HH_1$ and $\HH_2$. Every element of the double-coset space  $\ha\backslash G/\hd$ has a representative of the form $(j; 0, 0; d)$, where $j,d\in \Z$, so we can think of $\cfahat(\HH_1)\boxtimes\cfdhat(\HH_2)$ as graded by $\Z\times \Z$.  The second factor, i.e. the number $d$, is in fact the relative Alexander grading for knot Floer homology. For more detail on these facts, we refer the reader to the the discussion preceding  \cite[Theorem 11.21]{bfh2}, and the computation example in \cite[Section 11.9]{bfh2}.

\begin{proposition}\label{alex_tensor}
Let $u, v\in \cfahat(\mathcal H_1)$ and  $x, y\in \cfdhat(\mathcal H_2)$ be homogeneous elements with respect to the grading $\gr$, and suppose $u$ and $x$ occupy complementary $\alpha$-arcs, and so do $v$ and $y$.
Let $A$ be the relative Alexander grading between $u\boxtimes x$ and $v\boxtimes y\in \cfkhat (\mathcal H)$. Then
$$(a(u) + p\cdot a(x))-(a(v)+p\cdot a(y)) = A $$ 
\end{proposition}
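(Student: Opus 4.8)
The plan is to mimic the proof of Proposition \ref{maslov}, replacing the $\Z/2$-grading homomorphism $f$ with the Alexander grading maps constructed in Section \ref{alexsec}. First I would recall the gluing picture for the Alexander grading. By the discussion preceding \cite[Theorem 11.21]{bfh2}, the module $\cfahat(\HH_1)$ is graded by $\ha\backslash (G\times\Z)$ and $\cfdhat(\HH_2)$ by $(G\times\Z)/\hd$, and the box tensor product is graded by the double-coset space $\ha\backslash(G\times\Z)/\hd$, which under the map to $\Z\times\Z$ records the pair $(j;d)$, with the second coordinate $d$ being the relative Alexander grading $A$ on $\cfkhat(\HH)$. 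Thus, as in Proposition \ref{maslov}, there exist representatives $g_u,g_v,g_x,g_y\in G\times\Z$ for the cosets $\gr(u),\gr(v),\gr(x),\gr(y)$ and elements $h_1\in\ha$, $h_2\in\hd$ (viewed on the appropriate side after the reversal map $R$) such that
\begin{equation*}
h_1\, g_u\, g_x\, h_2 = \lambda^j\,\mu^A\, g_v\, g_y \in G\times\Z,
\end{equation*}
where $\mu=(0;0,0;1)$ is the generator of the extra $\Z$ factor that tracks passages through $w$, and $j$ is the (irrelevant) Maslov shift.

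Next I would apply the Alexander grading homomorphism. The key point is that the two maps $G\times\Z\to\frac12\Z$ defined in Section \ref{alexsec} — namely $(q_1,q_2;d)\mapsto d-p\,q_2$ on the $\cfahat$ side, and $(q_1,q_2)\mapsto n q_1-q_2$ on the $\cfdhat$ side (where here $n=0$) — are group homomorphisms that vanish on $\ha$ and on $R(\hd)$ respectively, since the periodic-domain gradings $g'(B)$, $g(B)$ were chosen to lie in their kernels. So applying the appropriate homomorphism to each factor of the product $g_u g_x$, I get that $a(u)$ picks up the contribution of $g_u$, and $p\cdot a(x)$ picks up the contribution of $g_x$ after composing with the $\cfdhat$-side map; the subtlety is bookkeeping which normalization $p$ appears where, and checking that the $\mu$-component combines with the $G$-components correctly so that the total Alexander grading of $u\boxtimes x$ in $\cfkhat(\HH)$ equals $a(u)+p\cdot a(x)$ up to an overall constant. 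Subtracting the analogous expression for $v\boxtimes y$, the $h_1,h_2$ and $\lambda^j$ terms die, and I am left with
\begin{equation*}
\bigl(a(u)+p\cdot a(x)\bigr)-\bigl(a(v)+p\cdot a(y)\bigr) = A.
\end{equation*}

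The main obstacle I anticipate is not the algebraic manipulation — that is a verbatim copy of Proposition \ref{maslov} — but rather pinning down precisely how the homological coefficient $p=\#(C\cap D^2)$ of the knot $C$ in the solid torus enters the identification of the second coordinate of $\ha\backslash(G\times\Z)/\hd$ with the relative Alexander grading of $\cfkhat$. Concretely, one must verify that the periodic domain $B$ for $\HH_1$ has $w$-multiplicity $p$ (which is exactly the computation carried out in Section \ref{alexsec}), and then trace through the gluing to see that the $d$-coordinate of $\gr^{\boxtimes}(u\boxtimes x)$, which is the genuine relative Alexander grading $A$ of $\cfkhat(\HH)$, decomposes as $a(u)+p\cdot a(x)$ under the two one-sided Alexander maps. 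Once that identification is in hand, the proof is a one-line application of the homomorphism property exactly as in Proposition \ref{maslov}.
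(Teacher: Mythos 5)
Your approach is the same as the paper's: fix coset representatives $g_u, g_x, g_v, g_y$ in $G\times\Z$, reduce $[g_ug_x]$ and $[g_vg_y]$ to canonical double-coset form by multiplying by appropriate powers of $h_A$ and $h_D$, and read off the Alexander ($d$-)coordinate. The bookkeeping you explicitly defer --- how the coefficient $p$ appears in front of $a(x)$ --- is precisely the paper's short displayed computation: since $h_A = (\_;0,1;p)$, left-multiplying $g_ug_x$ by $h_A^{-q_u-q_x}$ to kill the $q_2$-component pushes $-p(q_u+q_x)$ into the $d$-slot, giving canonical representative $(\_;0,0;d_u - pq_u - pq_x)$, which (using $d_x = 0$ and, with $n=0$, $a(x) = -q_x$) equals $a(u) + p\cdot a(x)$.
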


\begin{proof}
The Maslov component does not affect the computation, so we leave it blank. Recall that in this $0$-framed case we can assume that $h_A$ has form $(\_; 0, 1; p)$ and $h_D$ has form $(\_; 1, 0; 0)$.

Fix representatives 
\begin{align*}
g_u &= (\_; p_u, q_u; d_u),\\
 g_v &= (\_; p_v, q_v; d_v), \\
 g_x &= (\_; p_x, q_x; d_x), \\
 g_y &= (\_; p_y, q_y; d_y)
\end{align*}
in $G$ so that $[g_u] = \gr(u)\in \ha\backslash G$, etc. Then
$$[g_ug_x]= [h_A^{-q_u-q_x}g_ug_x h_D^{-p_u-p_x}]= [(\_; 0, 0; d_u - pq_u-pq_x)],$$
$$[g_vg_y]= [h_A^{-q_v-q_y}g_vg_y h_D^{-p_v-p_y}]= [(\_; 0, 0; d_v - pq_v-pq_y)],$$
and so $A = A(u\boxtimes x)-A(v\boxtimes y) = (d_u - pq_u-pq_x)-(d_v - pq_v-pq_y)$, which equals precisely
$(a(u)+ p\cdot a(x))-(a(v) + p\cdot a(y))$.
\end{proof}

Now we define a product operation on $\Lambda^\ast H_1(F; \Z)$. Define an inner product on the vector space $H_1(F; \Z)$ by 

\begin{displaymath}
\langle a_i, a_j\rangle = \left\{ \begin{array}{ll}
1 \quad & \textrm{if  $i=j$}\\
0 & \textrm{otherwise,}
\end{array} \right.
\end{displaymath}
i.e. so that  $\{a_1, \ldots, a_{2k}\}$ is an orthonormal basis. This extends to an  inner product on  $\Lambda^n H_1(F; \Z)$ by 
$$\langle v_1\wedge\cdots\wedge v_n, w_1\wedge\cdots\wedge w_n \rangle = \det (\langle v_i, w_j\rangle)$$
and $\{a_J| |J|=n\}$ is an orthonormal basis for $\Lambda^n H_1(F; \Z)$. We define an operation $\cdot$ on $\Lambda^\ast  H_1(F; \Z)$ by 
$$x\cdot y := \langle x_1, y_1\rangle +  \langle x_2, y_2\rangle+ \cdots+  \langle x_{2k}, y_{2k}\rangle,$$
where $x_i$ and $y_i$ are the $\Lambda^i$ components of $x$ and $y$, respectively.

\begin{proof}[Proof of Theorem \ref{intro_tensor}]
For the first part,  $M\boxtimes N$ inherits a $\Z/2$ grading from $M$ and $N$ by $m(x\boxtimes y) = m(x)+ m(y)$. We have
\begin{align*}
[M]\cdot[N] &= \sum_{x\in \mathfrak S(M)}(-1)^{m(x)}h(x)\cdot \sum_{y\in \mathfrak S(N)}(-1)^{m(y)}h(y)\\
&=\left(\sum_{\sss\subset[2k]} a_{\sss}\sum_{x\in \mathfrak S(M), h(x) = a_{\sss}}(-1)^{m(x)}\right)   \cdot     \left(\sum_{\ttt\subset[2k]} a_{\ttt}\sum_{y\in \mathfrak S(N), h(y) = a_{\ttt}}(-1)^{m(y)}\right)\\
&=\sum_{\substack{\sss, \ttt\in [2k]\\|\sss| = |\ttt|}}\langle a_{\sss}, a_{\ttt}\rangle \sum_{\substack{x\in \mathfrak S(M), h(x) = a_{\sss}\\ y\in \mathfrak S(N), h(y) = a_{\ttt}}} (-1)^{m(x)} (-1)^{m(y)}\\
&= \sum_{\substack{x\in \mathfrak S(M),  y\in \mathfrak S(N)\\ h(x) = a_{\sss} = h(y)}} (-1)^{m(x)+ m(y)}\\
&= \sum_{x\boxtimes y} (-1)^{m(x)+ m(y)}\\
&= \chi(M\boxtimes N).
\end{align*}
 The second part is the specialization of the first part to  $\cfahat$ and $\cfdhat$ with the grading $m$ defined in Section \ref{z2sec} and $\cfhat$ graded by the Maslov grading. We remark  that $[\cfdhat]$ and $[\cfahat]$ are elements of $\Lambda^k H_1(F; \Z)$. The last step in the above equation follows, up to an overall sign for each $\mathrm{spin}^c$ structure, from Proposition \ref{maslov}. 
\end{proof}

\begin{proof}[Proof of Theorem \ref{intro_alex}]
This is the Alexander-graded version of Theorem \ref{intro_tensor}. Recall that homology solid tori have a unique $\mathrm{spin}^c$ structure. For $\mathcal H_1$ and $\mathcal H_2$ as in Proposition \ref{alex_tensor},
\begin{align*}
[\cfahat(\mathcal H_1)] &= \sum_{\substack{x\in \mathfrak S(\mathcal H_1)\\I_A(x) = \iota_0}}(-1)^{m(x)}t^{a(x)}a_1 + \sum_{\substack{x\in \mathfrak S(\mathcal H_1)\\I_A(x) = \iota_1}}(-1)^{m(x)}t^{a(x)}a_2\\
[\cfdhat(\mathcal H_2)]  &= \sum_{\substack{y\in \mathfrak S(\mathcal H_2)\\I_D(y) = \iota_0}}(-1)^{m(y)}t^{a(y)}a_1 + \sum_{\substack{y\in \mathfrak S(\mathcal H_2)\\I_D(y) = \iota_1}}(-1)^{m(y)}t^{a(y)}a_2.
\end{align*}
We multiply out and see that $[\cfahat(\mathcal H_1)]\cdot[\cfdhat(\mathcal H_2)] $ equals
\begin{align*}
 \sum_{\substack{x\in \mathfrak S(\mathcal H_1), y\in \mathfrak S(\mathcal H_2)\\I_A(x) = \iota_0=I_D(y)}} & (-1)^{m(x)+ m(y)}t^{a(x)+ p a(y)} 
+ \sum_{\substack{x\in \mathfrak S(\mathcal H_1), y\in \mathfrak S(\mathcal H_2)\\I_A(x) = \iota_1=I_D(y)}}(-1)^{m(x)+ m(y)}t^{a(x)+ p a(y)} \\
&=\sum_{\substack{x\in \mathfrak S(\mathcal H_1), y\in \mathfrak S(\mathcal H_2)\\x\boxtimes y\neq 0}}(-1)^{m(x)+ m(y)}t^{a(x)+ p a(y)} \\
&= \sum_{\substack{x\in \mathfrak S(\mathcal H_1), y\in \mathfrak S(\mathcal H_2)\\x\boxtimes y\neq 0}}(-1)^{M(x\boxtimes y)}t^{A(x\boxtimes y)} \\
&= \pm t^k\chi(\cfkhat(K_C))
\end{align*}

Gluing a Heegaard diagram for the unknot in the $0$-framed solid torus with one generator (see \cite[Section 3]{cables}) to a diagram for $(S^3\setminus K, 0)$ shows that the $a_1$ component of  $[\cfdhat(S^3\setminus K, 0)]$ is  $\chi(\cfkhat(K)) =  \Delta_K(t)$. 
Similarly, gluing a diagram with one generator for the $0$-framed complement of the  unknot to a diagram for $C\hookrightarrow S^1\times D^2$ in a $0$-framed $S^1\times D^2$ shows that the $a_1$ component of $[\cfahat(S^1\times D^2, C)]$ is $\chi(\cfkhat(C)) =  \Delta_C(t).$

Proving that the $a_2$ component of  $[\cfdhat(S^3\setminus K, 0)]$ vanishes requires a different argument. Say $\rank \hfkhat(S^3, K) = n$ and  assume we have a basis  $\{x_0, \ldots, x_{2n}\}$ for $\cfkm(K)$ which is both horizontally and vertically simplified.  We illustrate $\cfkm(K)$ on the $(U, A)$ lattice as in \cite[Section 3]{cables}. Recall this means we have $2n+1$ points $\{ \xi_0, \ldots, \xi_{2n}\}$ representing the basis, and a vertical  (respectively horizontal) arrow from $x_i$ to $x_j$ of length $l$ is represented by a vertical (respectively horizontal) arrow of length $l$ pointing down (respectively to the left) staring at $\xi_i$ and ending at $\xi_j$. Note that there is at most one vertical and at most one horizontal arrow starting or ending at each basis element, and there is a unique element $\xi^v$ with no in-coming or out-going vertical arrow, and a unique element $\xi^h$ with no in-coming or out-going horizontal arrow. See Figure \ref{cfdgraph}.

Recall that we can obtain $\cfdhat(S^3\setminus K,0)$ by replacing arrows with chains of coefficient maps, and adding one more chain from $\xi^v$ to $\xi^h$, called the unstable chain.

Choose $\iota_0$ to be the base idempotent, and define $\psi(\iota_1) = (\frac{1}{2}; 1, 0, 0)\in G'(4)$. This refinement give rise to a $G(\zz)$ grading, and hence to a $\Z/2$ grading $m$. With this choice, we list the relative $(m,a)$ gradings of a horizontal chain of length $l$, a vertical chain of length $l$, and the unstable chain when $\tau(K)=0$, $\tau(K)>0$, and $\tau(K)<0$, in this order.

\begin{displaymath}
\xymatrix{ 
(0,0)\ar[r]^{D_1}&(0,-\frac{1}{2}) &  (0,-\frac{3}{2}) \ar[l]_{D_{23}}& \dots  \ar[l]_{D_{23}} &   (0,\frac{1}{2}-l)  \ar[l]_{D_{23}}& (1, -l)\ar[l]_{D_{123}}\\
(0,0)\ar[r]^{D_3}& \ar[r]^{D_{23}} (0,\frac{1}{2}) &  (0,\frac{3}{2}) \ar[r]^{D_{23}}& \dots  \ar[r]^{D_{23}} &   (0,l-\frac{1}{2}) \ar[r]^{D_2}& (1, l) \\
(0,0)\ar[r]^{D_{12}} & (0,0) & & & & \hspace{68pt}\\
(0,0)\ar[r]^{D_1}&(0,-\frac{1}{2})  &  (0,-\frac{3}{2}) \ar[l]_{D_{23}}& \dots\ar[l]_{D_{23}} & (0,\frac{1}{2}-2\tau)  \ar[l]_{D_{23}\phantom{123}}&\ar[l]_{D_3}(0, -2\tau)\\
(0,0)\ar[r]^{D_{123}}&(1,\frac{1}{2}) \ar[r]^{D_{23}} &  (1,\frac{3}{2}) \ar[r]^{D_{23}}& \dots\ar[r]^{D_{23}\phantom{1234}}&(1,-2\tau-\frac{1}{2})\ar[r]^{D_2}& (0,-2\tau)
}
\end{displaymath}
\\

We plot the chains on the $(U, A)$ coordinate system so that the grading $a$ of a generator with coordinates $(x,y)$ is given by $x-y$. Draw each chain corresponding to a vertical or horizontal arrow  also as a vertical or horizontal chain, respectively, and represent coefficient maps between a generator in $\iota_0$ and a generator in $\iota_1$ by arrows of length $\frac{1}{2}$, and coefficient maps between two generators in $\iota_1$ by arrows of length one. The choice for the unstable chain depends on $\tau(K)$.

{\bf{\bigskip}{\noindent}{\underline{Case 1:}}} $\tau(K) = 0$.
Ignore the $D_{12}$ map, and  identify $\xi^v$ and $\xi^h$ if they are not the same basis element. Note this may not result in the correct model for $\cfdhat(S^3\setminus K, 0)$, but  the information about the $\iota_1$ elements is intact, which is all we are interested in. 

{\bf{\bigskip}{\noindent}{\underline{Case 2:}}} $\tau(K) > 0$.

Note that in this case $\xi^v$ is $\tau(K)$ units above and $\tau(K)$ units to the left of $\xi^h$. Draw the unstable chain in an $L$-shape, as follows. Starting at $\xi^v$,  represent the first coefficient map by a vertical arrow of length $\frac{1}{2}$, so that the first $\iota_1$ element is half a unit  below $\xi^v$. Proceed downwards, drawing the $D_{12}$ maps to have length one, until half the $\iota_1$ elements have been plotted. Repeat the process for the other half, starting at $\xi^h$ and going to the left. Connect the middle two elements by a straight arrow to represent the coefficient map between them. 

{\bf{\bigskip}{\noindent}{\underline{Case 3:}}} $\tau(K) < 0$.

Here $\xi^v$ is $|\tau(K)|$ units below and $|\tau(K)|$ units to the right of $\xi^h$. Rotate the construction for Case 2 by $180^{\circ}$.
 
Figure \ref{cfdgraph} illustrates the above description with a couple of examples.

Note that each element lies on a line $L_t$ of slope $1$ passing through $(t, 0)$, for some $t\in \frac{1}{2}\Z$. For $\iota_0$ elements $t\in \Z$, and for $\iota_1$ elements $t\in \Z+\frac{1}{2}$. If $a(x)-a(y) = t \in \frac{1}{2}\Z$, then the line through $x$ is $t$ units above the line through $y$. 

Let $G$ be the graph on vertices $\xi_0, \ldots, \xi_{2n}$ and edges the $2n+1$ chains (note that if $\tau(K)=0$ we may have only $2n$ vertices and $2n$ edges), embedded as above. Every vertex has degree $2$, so $G$ is a union of cycles. Endow edges with the orientation induced by the horizontal and vertical arrows for $\cfkm$, and orient the edge corresponding to the unstable chain to start at $\xi^v$ and end at $\xi^h$.  

Rotate the plane clockwise by $45^{\circ}$, so that the lines $L_t$ are now horizontal, and smoothen  $G$ locally at the vertices, so we can think of it as an immersion of a union of circles.  The vertices now comprise the local minima, local  maxima, and the points with vertical tangents of the immersion. A vertex $v\neq \xi^v, \xi^h$ is a local maximum if it has an incoming horizontal edge and an outgoing vertical edge, and a local minimum if it has an incoming vertical edge and an outgoing horizontal edge. The vertex $\xi^v$ is a local maximum if it has  an incoming horizontal edge and $\tau(K)\geq0$, and a local minimum if it has an outgoing horizontal edge and $\tau(K)\leq 0$. Otherwise it has a vertical tangent.  Similarly, $\xi^h$ is a local maximum if it has  an outgoing vertical edge and $\tau(K)\leq 0$, and a local minimum if it has an incoming vertical  edge and $\tau(K)\geq 0$. Note that this covers the case when $\xi^v=\xi^h$.

\begin{figure}[pb!]
 \centering
       \labellist
       \pinlabel $\xi^v$ at 12 575
           \pinlabel $\xi^h$ at 170 414
       \pinlabel $\xi^h$ at 330 567
           \pinlabel $\xi^v$ at 430 440
            \endlabellist
       \includegraphics[scale=.8]{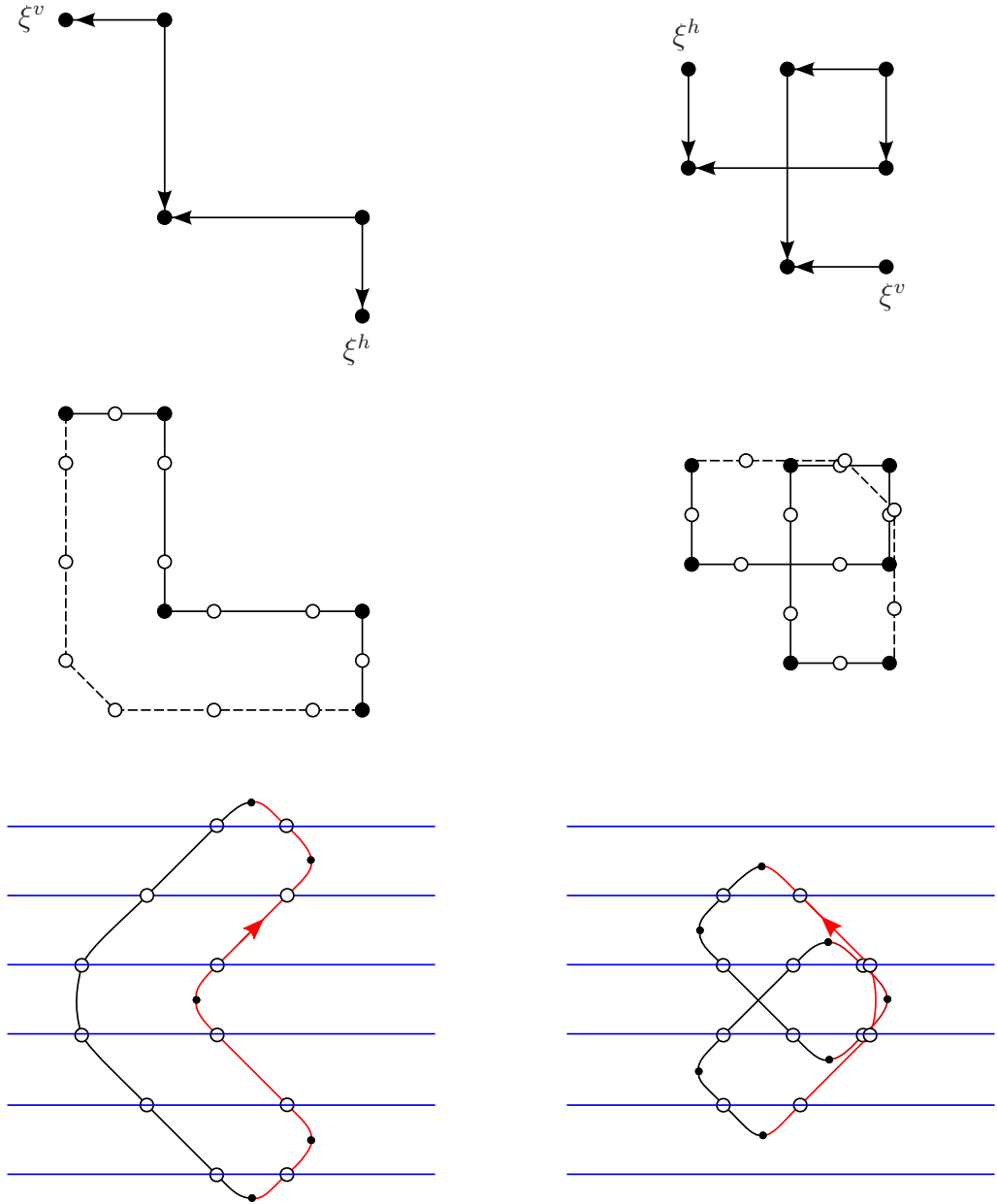} 
       \vskip .2 cm
       \caption{Two examples of the graphical interpretation of $\cfkm$ and $\cfdhat$. Left: The  $(3,4)$ torus knot ($\tau = 3$). Right: the $(2,-1)$ cable of the left-handed trefoil ($\tau = -2$). On top are the models for $\cfkm$, below are the models for $\cfdhat$, with dashed unstable chain, and on the bottom are the rotated, smoothened graphs. Red and black represent opposite $m$ gradings.}
       \label{cfdgraph}
\end{figure}

Tracing the $\iota_1$ elements along a connected component of $G$, observe that the $m$ grading changes exactly when passing through a local maximum or minimum. On the other hand, think of a connected component as an immersed circle, ignore the orientations of the edges, and fix an orientation for the circle. The derivative of the height function changes sign exactly at the local maxima or minima, so two $\iota_1$ elements have the same $m$ grading exactly when the derivatives of the height function at their coordinates have the same sign. For any $t\in \Z+\frac{1}{2}$, the line $L_t$ crosses $G$ away from any local minima and maxima, and the oriented intersection number of $L_t$ and $G$ is zero, since $G$ consists of immersed circles. This means that half the intersection points have positive derivative, and the other half have negative derivative (see Figure \ref{cfdgraph}). In other words, half of the $\iota_1$ elements of a given $a$ grading have $m$ grading 0, and the other half have $m$ grading $1$, and so they cancel each other out in the summation for $[\cfdhat(S^3, K), 0]$, i.e.   
$$\sum_{x\in \mathfrak S(\cfdhat(S^3\setminus K, 0)), I_D(x)=\iota_1} (-1)^{m(x)}t^{a(x)}= 0.$$
In other words, the $a_2$ component of $[\cfdhat(S^3\setminus K, 0)]$ is zero.

Last, we show that we do not in fact need the assumption that we have a basis which is both vertically and horizontally simplified. 

 Let  $\Xi = \{\xi_0, \ldots, \xi_{2n}\}$ be the plot of a vertically simplified basis with $\xi^v$ in position $(0,\tau)$, and let $H = \{\eta_0, \ldots, \eta_{2n}\}$ be the plot of a horizontally simplified basis with $\eta^h$ in position $(\tau, 0)$. Also plot the vertical and horizontal chains.

The symmetries of $\cfk^{\infty}$ discussed in \cite[Section 3.5]{hfk} imply that when we have a reduced complex, the vertical and horizontal complexes are isomorphic as graded, filtered complexes. The change of basis  for this isomorphism may not be a bijection, and in fact may not even map generators to homogeneous linear combinations of generators, but since the isomorphism preserves gradings and filtrations, we may deduce that the number of elements of $\Xi$ in position $(x,y)$ with given Maslov grading is the same as the number of elements of $H$ with the same grading in position $(y,x)$.

In addition, the symmetry  
$$\hfkhat_i(K,j)\cong \hfkhat_{i-2j}(K, -j)$$
implies that  there is the same number of elements of $\Xi$ with given parity of the Maslov grading in position $(x,y)$, as in position $(y,x)$. 

Since the grading $m$ agrees with the Maslov grading, by combining the two symmetries, we see that for a given $m$ grading ($0$ or $1$) there the number of elements of $\Xi$ of that grading in a given position $(x,y)$ equals the number of elements $H$ of the same grading in the same position.
In other words,  we can find a bijection $b:H\to \Xi$ that preserves coordinates and also preserves the $m$ grading. Identify the two bases under this bijection, i.e. think of  a horizontal  chain from $\eta_i$ to $\eta_j$, as a horizontal chain from $b(\eta_i)$ to $b(\eta_j)$, and think of the unstable chain as going from $\xi^v$ to $b(\eta^h)$. While the result of this identification may not represent $\cfdhat(S^3\setminus K, 0)$, it has the same graphical structure that we already analyzed in the case of a basis which is simultaneously horizontally and vertically simplified. The bigradings on the chains when moving along a connected component of the graph obey the same rules as before, since $b$ respects the $m$ grading, and by \cite[Lemma 3.2.5]{cables} the $a$ grading of any element is specified by its coordinates.  This allows is to make the same cancelation argument as before.
\end{proof}

\section{A $\Z/2$ grading via intersection signs}\label{z2int}

In Section \ref{z2sec} we defined a $\Z/2$  differential grading $m$  on the surface algebra $\az$ and the left or right modules over it, and showed that it agrees with the Maslov grading after tensoring. For the algebra, this grading was defined as a composition of the $G(\zz)$ grading  from  \cite{bfh2} with a homomorphism from $G(\zz)$ to $\Z/2$;  for the modules, it was defined as a composition of the $G(\zz)$-set grading with a quotient of the homomorphism from $G(\zz)$ to $\Z/2$.

Inspired by a similar definition in \cite{dsfh}, in this section we provide a more hands-on definition of the grading $m$, via intersection signs of $\alpha$- and $\beta$-curves on a Heegaard diagram. 

Let $\zz$ be a pointed matched circle, and let $k$ be the genus of the surface $F(\zz)$. Given a Heegaard diagram $\HH$ with $\bdy \HH=\zz$, recall that the $4k$ points  $\balpha\cap \zz$ come with an ordering $\lessdot$, induced by the orientation of $\zz\setminus z$ \cite[Section 3.2]{bfh2}. For any $\alpha$-arc $\alpha_i$, label its endpoints as $\alpha_i^-$ and $\alpha_i^+$, so that $\alpha_i^-\lessdot\alpha_i^+$, and 
order the $\alpha$-arcs so that $\alpha_1^-\lessdot\alpha_2^-\lessdot\ldots\lessdot\alpha_{2k}^-$. Write the matching  as $M(\alpha_i^-)= M(\alpha_i^+) = i$, i.e. so that an idempotent $I(\sss)$ corresponds to the set of $\alpha$-arcs indexed by $\sss\subset [2k]$. We recall  the definition of the function $J$ from Section \ref{k0sec}. Given a set $\sss \subset [2k]$, $J(\sss)$ is the multi-index, i.e. ordered set, $(j_1, \ldots, j_n)$ for which $ j_1<\ldots < j_n$ and $\{j_1, \ldots, j_n\} = \sss$.

We define a grading on the algebra $\az$ by  looking at the diagram for the bimodule $\az$ that  was studied in \cite{auroux} (labeled $(\hat F, \{\tilde \alpha^-_i\}, \{\tilde \alpha^+_i\})$), and in \cite[Section 4]{hfmor} (labeled $\mathrm{AZ}(\zz)$).  Figure \ref{az} is an example of $\mathrm{AZ}(\zz)$ when $\zz$ is the split pointed matched circle of genus $2$. Let  $\bdy_{\alpha}\textrm{AZ}(\zz)$ denote the boundary component of $\mathrm{AZ}(\zz)$ which intersects the $\alpha$-arcs, and Let  $\bdy_{\beta}\textrm{AZ}(\zz)$ denote the boundary component of $\mathrm{AZ}(\zz)$ which intersects the $\beta$-arcs. Order the $\alpha$-arcs and label their endpoints as above, i.e. following the orientation of $\bdy_{\alpha}\textrm{AZ}(\zz)\setminus z$, and do the same for the $\beta$-arcs,  i.e. following the orientation of $\bdy_{\beta}\textrm{AZ}(\zz)\setminus z$. For each $i$, orient $\alpha_i$ from $\alpha_i^-$ to $\alpha_i^+$, and  $\beta_i$ from $\beta_i^-$ to $\beta_i^+$. For any point $x\in \balpha\cap \bbeta$, define $s(x)$ to be the intersection sign of $\balpha$ and $\bbeta$ at $x$.  Note that the intersection sign of $\alpha_i$ and $\beta_i$ at the diagonal of the triangle is positive. 

\begin{figure}[tb!]
 \centering
       \labellist
       \pinlabel $z$ at 138 138
       \pinlabel $\alpha_1^-$ at 155 18
       \pinlabel $\alpha_2^-$ at 155 34
       \pinlabel $\alpha_1^+$ at 155 50
       \pinlabel $\alpha_2^+$ at 155 66
       \pinlabel $\alpha_3^-$ at 155 82
       \pinlabel $\alpha_4^-$ at 155 98
       \pinlabel $\alpha_3^+$ at 155 114
       \pinlabel $\alpha_4^+$ at 155 130
       \endlabellist
       \includegraphics[scale=.95]{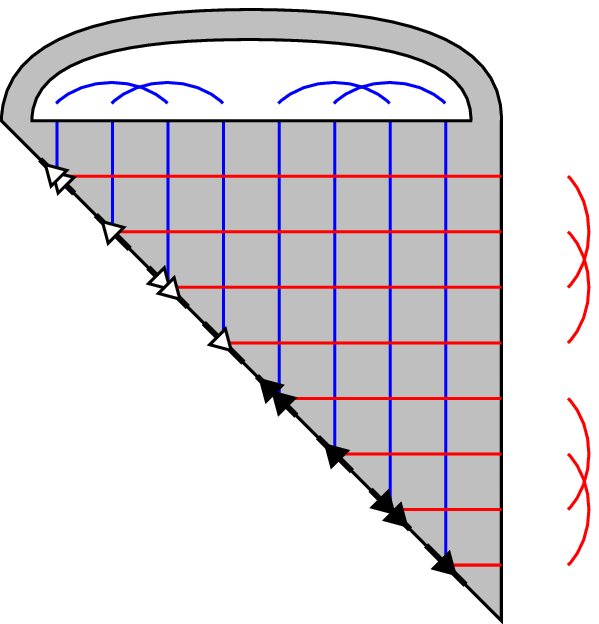} 
       \vskip .2 cm
       \caption{The diagram $\mathrm{AZ}(\zz)$.}
       \label{az}
\end{figure}

Recall that the generators $\mathfrak S(\mathrm{AZ}(\zz))$ are in one-to-one correspondence with the standard generators of $\az$ by strand diagrams. We will denote a generator of $\az$ and the corresponding generator in $\mathfrak S(\mathrm{AZ}(\zz))$ the same way.  Given a generator ${\bf a}$ of $\az$, write its representative in  $\mathfrak S(\mathrm{AZ}(\zz))$ as an ordered subset $\aaa = (x_1, \ldots, x_p)$ of $\balpha\cap\bbeta$, with $x_i$ ordered according to the occupied $\alpha$-arcs.  For a generator $\xxx\in \mathfrak S(\mathrm{AZ}(\zz))$, let  $o_\alpha(\xxx)$ be the set of occupied $\alpha$-arcs, and let  $o_\beta(\xxx)$ be the set of occupied $\beta$-arcs.
Define $\sigma_{\xxx}$ to be the permutation for which 
\begin{align*}
x_1 &\in \alpha_{i_1}\cap \beta_{j_{\sigma_{\xxx}(1)}}\\
& \hspace{7pt} \vdots\\
x_p &\in \alpha_{i_p}\cap \beta_{j_{\sigma_{\xxx}(p)}}\\
\end{align*}
where $(i_1, \ldots, i_p) = J(o_\alpha(\xxx))$ and $(j_1, \ldots, j_p) = J(o_\beta(\xxx))$. In other words, $\sigma_{\xxx}$ is the permutation arising from the induced orders on the two sets of occupied arcs.
Define the sign of $\xxx$ by 
$$s(\xxx) = \textrm{sign}(\sigma_{\xxx})\prod_{i=1}^p s(x_i).$$

\begin{lemma}
The sign assignment $s$ induces a differential grading $m$ on $\az$, in the sense that the unique function $m: \mathfrak S(\mathrm{AZ}(\zz))\to \Z/2$ for which $s= (-1)^m$ is a differential grading.
\end{lemma}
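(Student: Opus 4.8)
The function $m$ exists and is unique for a trivial reason: by construction $s$ takes values in $\{\pm 1\}$ (it is the product of $\mathrm{sign}(\sigma_\xxx)$ with finitely many local intersection signs $s(x_i)$), so one must set $m(\xxx) = 0$ when $s(\xxx) = 1$ and $m(\xxx) = 1$ when $s(\xxx) = -1$. The real content is that this $m$ is a \emph{differential} grading on $\az$, i.e. that $m({\bf a}{\bf b}) = m({\bf a}) + m({\bf b})$ whenever ${\bf a}{\bf b} \neq 0$ and $m(\partial {\bf a}) = m({\bf a}) + 1$. Rather than verify these directly, the plan is to show that $m$ coincides with the differential $\Z/2$-grading of Section~\ref{z2sec}, which I will temporarily denote $m_0$; since $m_0$ is already known to be a differential grading, that finishes the proof. (Because $f$ is a homomorphism to an abelian group, $m_0$ is insensitive to the conjugation ambiguity in the choice of grading refinement data, so it is legitimate to compare $m$ against it.)

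The comparison rests on the fact that $\mathrm{AZ}(\zz)$ is a bordered Heegaard diagram, with two boundary components, representing $\az$ as an $\mathrm{AA}$-bimodule over itself, in such a way that $\mathfrak S(\mathrm{AZ}(\zz))$ is the standard $\F_2$-basis of $\az$ and the $G(\zz)$-valued bimodule grading restricts, generator by generator, to the grading $\gr$ on $\az$ \cite{auroux, hfmor, bimod}. Working in one summand $\cala(\zz,i)$ at a time and fixing refinement data with base idempotent $I(\sss)$, let $\aaa_0 \in \mathfrak S(\mathrm{AZ}(\zz))$ be the generator made up of the diagonal intersection points on the arcs indexed by $\sss$; it represents $I(\sss)$, and directly from the definitions $\sigma_{\aaa_0} = \mathrm{id}$ while each diagonal intersection sign is $+1$, so $s(\aaa_0) = +1$ and $m_0(\aaa_0) = 0$. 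For any other generator $\aaa$ in that summand, choose a domain $B \in \pi_2(\aaa_0, \aaa)$; by the recipe for module gradings in Section~\ref{z2sec}, $m_0(\aaa) = f(g(B))$, where $g$ is the refined grading on domains and $f\colon G(\zz)\to\Z/2$ is the homomorphism of Section~\ref{z2sec}. So it remains to show
$$f(g(B)) \equiv \tfrac{1 - s(\aaa)}{2} \pmod{2}.$$

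This should follow from the same lifting construction as in the proof of Theorem~\ref{domains}, now applied to a not-necessarily-periodic domain $B$ in $\mathrm{AZ}(\zz)$: one builds the surface $F$, and $f(g(B)) = \fs(g(B))$ reduces mod $2$ to the sum of three pieces — an Euler-measure/point-count piece from $-e(B) - n_{\aaa_0}(B) - n_{\aaa}(B)$, the refinement correction through the $\psi$'s, and the linking piece $\sum_{i_1 < i_2} h_{i_1} h_{i_2}\delta_{i_1 i_2}$. With the orientations of $\Sigma$ and of the $\alpha$- and $\beta$-arcs fixed as in Section~\ref{z2int}, the first piece is a sum of corner signs of $B$ at the points of $\aaa_0$ and $\aaa$, and each corner sign at a point $x$ is the local intersection sign $s(x)$; the refinement correction, together with the conjugation trick of Theorem~\ref{domains}, reduces the general case to domains between generators occupying the arcs $\sss$; and the linking piece, exactly as in Theorem~\ref{domains}, is the parity of the number of crossings of the closed braid formed by the lifted $\alpha$-arcs, hence equals the parity of $\mathrm{sign}(\sigma_\aaa)\cdot\mathrm{sign}(\sigma_{\aaa_0})$. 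Assembling these and using $s(\aaa_0) = +1$ gives $f(g(B)) \equiv \tfrac{1-s(\aaa)}{2}$, so $m_0 = m$ on all of $\az$ and $m$ is a differential grading.

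The step I expect to be the crux is the sign bookkeeping in the last paragraph — matching, corner by corner, the contribution of $B$ at an intersection point with the local sign $s(x)$, and checking that changing any of the auxiliary data (orientation of $\Sigma$, orientations of the $\alpha$- and $\beta$-arcs, position of the basepoint, refinement data) changes both sides of the displayed congruence by the same overall constant, so that the statement is independent of all choices. Everything else is a direct extension, from periodic domains to arbitrary domains in $\mathrm{AZ}(\zz)$, of the computation already carried out in the proof of Theorem~\ref{domains}.
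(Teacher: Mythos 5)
Your proposal takes a genuinely different route from the paper. The paper proves the lemma directly, entirely inside $\mathrm{AZ}(\zz)$, without ever invoking the $G(\zz)$-valued grading: for compatibility with the differential, a rectangle connecting $\xxx$ to $\yyy$ swaps two entries of the permutation (flipping $\mathrm{sign}(\sigma_{\xxx})$) while the four local intersection signs at the rectangle's corners multiply to $+1$; for compatibility with multiplication, the half-strips realizing $\xxx\cdot\yyy=\zzz$ produce explicit permutations $\sigma_\alpha$, $\sigma_\beta$ for which $\sigma_{\xxx}^{-1}\sigma_{\zzz}\sigma_{\yyy}^{-1}\sigma_{I_{\alpha}(\xxx)}=\id$, and a pairing of intersection points across half-strips forces the local signs to cancel, yielding $s(\xxx)s(\yyy)s(\zzz)s(I_{\alpha}(\xxx))=1$. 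This is short, elementary, and self-contained.

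Your plan instead identifies $s$ with the $\Z/2$-grading $m_0 = f\circ\gr$ of Section~\ref{z2sec} and deduces the lemma from the fact that $m_0$ is already a differential grading. If carried out, this would also supply the proof that the paper explicitly omits in the Remark at the end of Section~\ref{z2int} (calling it ``a rather tedious computation''), so it is a more ambitious route. But the crucial congruence $f(g(B)) \equiv \tfrac{1-s(\aaa)}{2}\pmod 2$ is only sketched, and the sketch is not a routine extension of Theorem~\ref{domains}. That theorem treats \emph{periodic} domains $B\in\pi_2(\xx,\xx)$, where every corner of the lifted surface $F$ at a point of $\xx$ is a half-disk and the whole count closes up to zero. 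For $B\in\pi_2(\aaa_0,\aaa)$ with $\aaa_0\neq\aaa$, the domain has nontrivial corners at $\aaa_0$ and $\aaa$ that lift to quarter-disks, and the identification of the resulting Maslov contribution $-e(B)-n_{\aaa_0}(B)-n_{\aaa}(B)$ with a product of local intersection signs $s(x)$ is genuinely new content: it needs a corner-by-corner orientation analysis that simply does not appear in Theorem~\ref{domains}. Similarly, the claim that the linking term reduces to $\mathrm{sign}(\sigma_{\aaa})\mathrm{sign}(\sigma_{\aaa_0})$ assumes the same closed-braid picture as the periodic case, but the nontrivial corners change which arcs appear in the braid and how they close up. Until those two pieces are actually verified, the equality $m = m_0$, and hence the lemma under your strategy, is not established. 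The paper's direct verification bypasses the issue entirely.
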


\begin{proof}
If a generator $\yyy$ is in the differential of $\xxx$, then $\xxx$ and $\yyy$ are equal size, say $p$, as subsets of $\balpha\cap \bbeta$. There is a rectangle connecting $\xxx$ to $\yyy$, so $\xxx$ and $\yyy$ differ exactly at the vertices of the rectangle, say $x_i\neq y_i$ and $x_j\neq y_j$. Then $\prod_{i=1}^p s(x_i) \cdot \prod_{i=1}^p s(y_i) = s(x_i)s(x_j)s(y_i)s(y_j)=1$, and $\textrm{sign}(\sigma_{\xxx}) = - \textrm{sign}(\sigma_{\yyy})$, hence $s(\yyy) = -s(\xxx)$.

For the $m_2$ multiplication, note that if $\xxx$ and $\yyy$ are generators with $\xxx\cdot \yyy = \zzz$, we can see this as a set of half-strips from $\xxx$ to $\zzz$ with boundary $\brho$, so that $\yyy$ represents $I_{\alpha}(\xxx)a(\brho)I_{\alpha}(\zzz)$, where $I_{\alpha}$ is the idempotent corresponding to $o_{\alpha}$ for any generator. Then $I_{\alpha}(\xxx)\cdot \yyy = \yyy$ counts half-strips from $I_{\alpha}(\xxx)$ to $\yyy$ with the same boundary $\brho$. Since $\xxx$ and $I_{\alpha}(\xxx)$ occupy the same $\alpha$-arcs, then  $J(o_\alpha(\xxx))  = J(o_\alpha(I_{\alpha}(\xxx)))$. Similarly, $J(o_\alpha(\yyy))  = J(o_\alpha(\zzz))$, $J(o_\beta(\xxx)) = J(o_\beta(\zzz))$, and $J(o_\beta(I_{\alpha}(\xxx))) = J(o_\beta(\yyy))$. 
Let
\begin{align*}
J(o_\alpha(\xxx)) &= (a_1, \ldots, a_p)\\
J(o_\alpha(\zzz)) &= (b_1, \ldots, b_p)\\
J(o_\beta(\xxx)) &= (c_1, \ldots, c_p)\\
J(o_\beta(\yyy)) &= (d_1, \ldots, d_p),\\
\end{align*}
and write $I_{\alpha}(\xxx) = (w_1, \ldots, w_p)$,  again with $w_i$ ordered according to the occupied $\alpha$-arcs.

Let $\sigma_{\alpha}$ be the permutation that maps $J(o_\alpha(\zzz))$ to $J(o_\alpha(\xxx))$ along half-strips, i.e. $\sigma_{\alpha}(i) = i$ if $x_i = z_i$, and $\sigma_{\alpha}(i) = j$ if $x_j$ and $z_i$ are connected by a half-strip. Let $\sigma_{\beta}$ be the permutation that maps $J(o_\beta(\xxx))$ to $J(o_\beta(I_{\alpha}(\xxx)))$ so that $\sigma_{\beta}(i) = j$ if $x_i$ and $w_j$ occupy the same $\alpha$-arc.
Then $\sigma_{\zzz} = \sigma_{\xxx}\circ \sigma_{\alpha}$, $\sigma_{\yyy} = \sigma_{\beta}\circ \sigma_{\xxx}\circ \sigma_{\alpha}$, and $\sigma_{I_{\alpha}(\xxx)} = \sigma_{\beta}\circ \sigma_{\xxx}$, so  $\sigma_{\xxx}^{-1}\sigma_{\zzz} \sigma_{\yyy}^{-1}\sigma_{I_{\alpha}(\xxx)} = \id$. Hence,   $\textrm{sign}(\sigma_{\xxx})\textrm{sign}(\sigma_{\zzz}) \textrm{sign} (\sigma_{\yyy})\textrm{sign} (\sigma_{I_{\alpha}(\xxx)} )= 1$.

In addition, if $x_i = z_{\sigma_{\alpha}(i)}$, then $w_i = y_{\sigma_{\alpha}(i)}$ and it follows that  $s(x_i) = s(z_{\sigma_{\alpha}(i)})$ and  $s(w_i) = s(y_{\sigma_{\alpha}(i)})$. If instead $x_i$ and $z_{\sigma_{\alpha}(i)}$ are connected by a half-strip, then $w_i$ and $y_{\sigma_{\alpha}(i)}$ are connected by a half-strip, with the same boundary, so $w_i, x_i, z_{\sigma_{\alpha}(i)}, y_{\sigma_{\alpha}(i)}$ are the vertices of a rectangle, so $s(w_i) s(x_i) s(z_{\sigma_{\alpha}(i)})s(y_{\sigma_{\alpha}(i)}) = 1$.

Thus
\begin{align*}
s(\xxx)s(\yyy)s(\zzz)s(I_{\alpha}(\xxx)) &=  \left(\textrm{sign}(\sigma_{\xxx})\prod_{i=1}^p s(x_i)\right) \cdot \left( \textrm{sign}(\sigma_{\yyy})\prod_{i=1}^p s(y_i)\right) \\
&\hspace{20pt} \cdot\left( \textrm{sign}(\sigma_{\zzz})\prod_{i=1}^p s(z_i)\right)\cdot \left(  \textrm{sign}(\sigma_{I_{\alpha}(\xxx)})\prod_{i=1}^p s(w_i) \right)\\
& = \textrm{sign}(\sigma_{\xxx})\textrm{sign}(\sigma_{\zzz}) \textrm{sign} (\sigma_{\yyy})\textrm{sign} (\sigma_{I_{\alpha}(\xxx)} )\\
& \hspace{20pt}  \cdot \prod_{i=1}^p  \left(s(w_i) s(x_i) s(z_{\sigma_{\alpha}(i)})s(y_{\sigma_{\alpha}(i)}) \right)\\
& = 1.
\end{align*}
On the other hand, $I_{\alpha}(\xxx)$ is an idempotent, so $\sigma_{I_{\alpha}(\xxx)} = \id$, and $s(w_i) = 1$ for all $i$, and so $s(I_{\alpha}(\xxx))  = 1$. Therefore, 
\begin{equation*}
s(\xxx)s(\yyy) = s(\zzz).\qedhere
\end{equation*}
\end{proof}

Next, we define a $\Z/2$ grading on $\cfdhat$. Given a Heegaard diagram $\HH$ for a bordered $3$-manifold, order  the $\alpha$ arcs as above,  but according to the orientation on $-\bdy \HH$, and orient them from $\alpha_i^+$ to $\alpha_i^-$. Also order and orient all $\alpha$ and $\beta$ circles, and define a complete ordering on all $\alpha$-curves by $\alpha_1, \ldots, \alpha_{2k}, \alpha^c_1, \ldots, \alpha^c_{g-k}$.

Write generators as ordered tuples $\xxx = (x_1, \ldots, x_g)$ to agree with the ordering of the occupied $\alpha$-curves, and for any generator $\xxx$,  define $\sigma_{\xxx}$ to be the permutation for which 
\begin{align*}
x_1 &\in \alpha_{i_1}\cap \beta_{\sigma_{\xxx}(1)}\\
& \hspace{7pt} \vdots\\
x_k &\in \alpha_{i_k}\cap \beta_{\sigma_{\xxx}(k)}\\
x_{k+1} &\in \alpha_1^c\cap \beta_{\sigma_{\xxx}(k+1)}\\
&\hspace{7pt}  \vdots\\
 x_g &\in \alpha_{g-k}^c\cap \beta_{\sigma_{\xxx}(g)},
\end{align*}
where $(i_1, \ldots, i_k) = J(o(\xxx))$.
For any $x_i$, define $s(x_i)$ to be the intersection sign of $\balpha$ and $\bbeta$ at $x_i$.  We also define  $\sigma_\sss$ for each $k$-element set $\sss\subset [2k]$ to be the permutation in $S_{2k}$ that maps the ordered set $(1,\ldots, k)$ to $J(\sss)$ and $(k+1, \ldots, 2k)$ to $J([2k]\setminus \sss)$.

Last, define the sign of $\xxx$ by 
$$s(\xxx) = \textrm{sign}(\sigma_{o(\xxx)})\textrm{sign}(\sigma_{\xxx})\prod_{i=1}^g s(x_i).$$

\begin{proposition}\label{scfd}
The sign function $s$
induces a differential grading $m$ on $\cfdhat$, in the sense that the unique function $m: \mathfrak S(\HH)\to \Z/2$ for which $s= (-1)^m$ is a differential grading.
\end{proposition}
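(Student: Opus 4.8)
The plan is to show that the sign function $s$ is multiplicative up to sign along the type $D$ differential, i.e.\ that whenever $\yyy$ appears in $\delta_1(\xxx)$ with a nonzero algebra coefficient $a(\brho)$, the product $s(\xxx)\cdot s(\yyy)\cdot s(a(\brho)) = -1$, where $s$ on algebra elements is the sign function from the preceding lemma. Once this is established, the unique $m\colon\mathfrak S(\HH)\to\Z/2$ with $s=(-1)^m$ automatically satisfies $m(\yyy) = m(\xxx) + m(a(\brho)) + 1 \pmod 2$, which is exactly the statement that $m$ is a differential grading on $\cfdhat$ compatible with the $\Z/2$ grading $m$ on $\az$ (the differential $\delta_1$ lowers the grading by $1$, as $\lambda^{-1}$ acts by $1$ on $\Z/2$). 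So the whole proposition reduces to a single local computation at a domain contributing to $\delta_1$.

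The key steps, in order: (1) Recall from \cite{bfh2} that a term in $\delta_1(\xxx)$ comes from a homology class $B\in\pi_2(\xxx,\yyy)$ admitting a holomorphic representative; the relevant combinatorics is that $B$ is a sum of regions, $\xxx$ and $\yyy$ agree away from the support of $B$, and the boundary $\bdy^\bdy B$ records the Reeb chords $\brho$. (2) Factor the sign $s(\xxx)s(\yyy)$ into three pieces according to its definition: the permutation sign $\mathrm{sign}(\sigma_{o(\xxx)})\mathrm{sign}(\sigma_{o(\yyy)})$ coming from ordering the occupied $\alpha$-arcs, the permutation sign $\mathrm{sign}(\sigma_{\xxx})\mathrm{sign}(\sigma_{\yyy})$ matching $\alpha$-curves to $\beta$-curves, and the local intersection-sign product $\prod_i s(x_i)\prod_i s(y_i)$. (3) Analyze how $B$ changes the occupied $\alpha$-arcs: moving from $\xxx$ to $\yyy$ along $B$ either keeps the set $o(\xxx)$ fixed (when the points of $\xxx$ and $\yyy$ lie on $\alpha$-circles or on the same arcs) or changes it in a way dictated exactly by $\brho$; this is where the comparison with the algebra element $a(\brho)$ — whose sign $s(a(\brho))$ was defined via $\mathrm{sign}(\sigma)$ for a braid-like permutation on the occupied arcs together with local intersection signs in $\mathrm{AZ}(\zz)$ — must be made. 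The cleanest route is to mimic the $m_2$-compatibility argument in the previous lemma: write $\delta_1$-contributing domains as unions of half-strips (near $\bdy\Sigma$) glued to rectangles/bigons in the interior, and check sign multiplicativity strip-by-strip and rectangle-by-rectangle, exactly as the lemma did for the multiplication on $\az$. Each interior rectangle contributes a factor $+1$ to the intersection-sign product and flips one transposition, hence contributes $-1$ overall, matching the differential; each half-strip at $\bdy\Sigma$ is accounted for by the corresponding factor in $s(a(\brho))$; and the bookkeeping permutations $\sigma_{o(\xxx)}$ versus $\sigma_{o(\yyy)}$ absorb the reindexing of occupied arcs and cancel against the braid permutation defining $s(a(\brho))$. (4) Also handle the $\alpha$-circles and $\beta$-circles separately: since each is occupied exactly once, their contribution to the matching permutation behaves like the $\alpha$-arc case but without any idempotent subtlety, and the argument of step (3) applies verbatim.

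The main obstacle I expect is step (3): carefully tracking the two permutation signs $\mathrm{sign}(\sigma_{o(\xxx)})$ and $\mathrm{sign}(\sigma_{\xxx})$ as the occupied-arc set changes, and matching this against the permutation built into the algebra sign $s(a(\brho))$ — in particular, being sure that the ordering conventions on $-\bdy\HH$ (arcs oriented $\alpha_i^+\to\alpha_i^-$, ordered by the reversed orientation) are consistent with the conventions used in defining $s$ on $\mathrm{AZ}(\zz)$, so that no stray global sign appears. A secondary technical point is the unstable/special behavior near the basepoint $z$ and the fact that $\delta_1$ counts only provincial domains plus those touching $\bdy\Sigma$; but since the sign computation is purely combinatorial and local to the support of $B$, the holomorphic content is irrelevant and only the region-combinatorics enters, so this should not cause real difficulty. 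I would organize the write-up to first dispose of a single interior rectangle (reproducing the differential-compatibility half of the previous lemma), then a single half-strip at $\bdy\Sigma$ (reproducing the $m_2$-compatibility half), and finally note that a general $\delta_1$-term decomposes into these pieces so that the signs multiply correctly.
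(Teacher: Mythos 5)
Your proposal is a genuinely different route from the paper's, and it has a real gap.

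\medskip
\textbf{What the paper actually does.} Rather than verify sign-multiplicativity domain-by-domain, the paper converts the type-$D$ situation into a closed one. Given a $\delta_1$-contributing pair $(B,\brho)$ from $\xxx$ to $\yyy$ with $\aaa = a(\brho)$, one glues $\mathrm{AZ}(\zz)$ onto $\HH$; then $B$ completes to a \emph{provincial} index-$1$ domain in $\mathrm{AZ}(\zz)\cup\HH$ connecting $I_D(\xxx)\otimes\xxx$ to $\aaa\otimes\yyy$. Lemma~\ref{sazh} (proved by further gluing a $\beta$-diagram to close the diagram, and then invoking the known sign assignment of \cite{dsfh} for closed/sutured diagrams) gives $s(I_D(\xxx)\otimes\xxx)\,s(\aaa\otimes\yyy)=-1$, and Lemma~\ref{sadds} factors the sign of a glued generator as a product. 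Since $s(I_D(\xxx))=1$, this is exactly $s(\xxx)s(\aaa)s(\yyy)=-1$. The content is a soft reduction, not a local combinatorial check.

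\medskip
\textbf{Where your proposal breaks down.} In step (3) you propose to ``write $\delta_1$-contributing domains as unions of half-strips glued to rectangles/bigons in the interior, and check sign multiplicativity strip-by-strip and rectangle-by-rectangle.'' This decomposition does not exist for a general bordered Heegaard diagram. The differential $\delta_1$ on $\cfdhat$ counts index-$1$ holomorphic curves whose domains can be arbitrarily complicated polygons and annuli with many corners; they are not, in general, disjoint unions or concatenations of embedded rectangles and boundary half-strips. The preceding lemma in the paper (the one for $\az$) \emph{does} get away with a rectangle-by-rectangle argument, but only because $\mathrm{AZ}(\zz)$ is a specific ``nice'' diagram where all differentials are given by rectangles; you are implicitly importing that property to an arbitrary $\HH$, where it fails. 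To repair your approach you would either need to first pass to a nice diagram and then argue invariance of the induced grading under diagram moves, or prove a direct combinatorial sign formula for arbitrary index-$1$ domains (something of the difficulty of the paper's Theorem~\ref{domains}); neither is sketched. The paper's gluing argument sidesteps exactly this obstruction — after gluing, the sign-flip for index-$1$ provincial domains in the combined closed diagram is a known fact, and no domain decomposition is ever needed.

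\medskip
Your steps (1)--(2) and the reduction of the proposition to a single sign identity $s(\xxx)s(\yyy)s(\aaa)=-1$ are fine and agree with the paper's framing; the observation in (4) about circles being occupied exactly once is also correct. But the core of the argument is missing, and the ``main obstacle'' you identify (tracking the ordering conventions) is not the real obstacle; the real obstacle is that the domains of $\delta_1$ do not decompose into elementary pieces.
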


In other words, we can define the $\Z/2$ grading of a generator to be $0$ if its sign is $1$, and $1$ if its sign is $-1$.

To prove this, we glue $\textrm{AZ}(\zz)$  to $\HH$ and define signs for the generators of the resulting Heegaard diagram. 
Define a total ordering on the $\alpha$-curves to agree with the ordering on $\HH$ and on the  $\beta$-curves by concatenating the ordering on $\textrm{AZ}(\zz)$ and the ordering on $\HH$ (in this order). The orientations on the $\alpha$-arcs in both diagrams are compatible, and induce an orientation on the $\alpha$-circles obtained after gluing.
Then we can define permutations and local intersection signs as above, and  define a sign for each generator $\zzz = \xxx \otimes \yyy$ of 
$\mathrm{AZ}(\zz)\cup \HH$ by 
$$s(\zzz) := \textrm{sign}(\sigma_{\zzz})\prod_{i=1}^{g+k}s(z_i).$$ 
Note that this definition induces a relative $\Z/2$ Maslov grading:

\begin{lemma}\label{sazh}
If two generators $\zzz$ and $\zzz'$ in $\mathrm{AZ}(\zz)\cup \HH$ are connected by an index $1$ domain that doesn't touch the boundary of the Heegaard diagram, then they have opposite signs. 
\end{lemma}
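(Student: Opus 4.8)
The plan is to recognize the sign $s(\zzz)=\mathrm{sign}(\sigma_{\zzz})\prod_{i=1}^{g+k} s(z_i)$ as the local intersection sign of $\zzz$, regarded as a transverse intersection point of $\mathbb{T}_{\balpha}$ and $\mathbb{T}_{\bbeta}$ inside the symmetric product of the surface underlying $\mathrm{AZ}(\zz)\cup\HH$, and then to invoke the standard fact that this intersection sign is reversed by a homotopy class of Whitney disks of Maslov index $1$. An index-$1$ domain avoiding the boundary contributes exactly such a change of sign, so $\zzz$ and $\zzz'$ have opposite signs.

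Concretely, I would first cap off the boundary of $\mathrm{AZ}(\zz)\cup\HH$ to obtain a closed surface $\Sigma$ of some genus $N=g+k$, carrying $N$ ordered, oriented $\alpha$-curves and $N$ ordered, oriented $\beta$-curves; the orderings and orientations fixed in the setup orient the tori $\mathbb{T}_{\balpha}=\alpha_1\times\cdots\times\alpha_N$ and $\mathbb{T}_{\bbeta}$, while $\mathrm{Sym}^{N}(\Sigma)$ carries its complex orientation. With these choices, the local intersection number of $\mathbb{T}_{\balpha}$ and $\mathbb{T}_{\bbeta}$ at a transverse point $\zzz=(z_1,\dots,z_N)$ with $z_i\in\alpha_{i}\cap\beta_{\sigma_{\zzz}(i)}$ is precisely $\mathrm{sign}(\sigma_{\zzz})\prod_i s(z_i)$: the permutation sign records the reordering of factors needed to line $\mathbb{T}_{\balpha}$ up with $\mathbb{T}_{\bbeta}$, and each $s(z_i)$ is the planar intersection sign of $\alpha$ and $\beta$ at $z_i$. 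Because the given domain avoids the boundary, its Maslov index can be computed in the closed diagram $(\Sigma,\balpha,\bbeta)$ by the usual formula, and the orientation-transport principle in $\mathrm{Sym}^{N}(\Sigma)$ then gives $s(\zzz)\,s(\zzz')=(-1)^{\mu}=-1$.

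As a consistency check that also pins down the conventions without appealing to the symmetric product, one can verify the identity $s(\zzz)\,s(\zzz')=(-1)^{\mu}$ directly on elementary domains: for an embedded bigon one has $\sigma_{\zzz}=\sigma_{\zzz'}$ and the two differing intersection points carry opposite planar signs, while for an embedded rectangle the four corner signs multiply to $+1$ and $\sigma_{\zzz'}=\sigma_{\zzz}\circ\tau$ for a transposition $\tau$; in both cases the product is $-1$. Since $s(\cdot)\,s(\cdot)$ and $(-1)^{\mu}$ are both multiplicative under concatenation of domains, and periodic domains avoiding the boundary have even index (as $c_1$ of a $\spinc$ structure on a parallelizable $3$-manifold is even), the relative sign pairing is well defined and these cases suffice. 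The main obstacle is purely bookkeeping: one must check that the ordering and orientation conventions built into $s$ — the orders of the $\alpha$- and $\beta$-arcs along the two boundary components of $\mathrm{AZ}(\zz)$, the orientation of each arc from its lower- to its higher-indexed endpoint, the concatenation order of the two $\beta$-orderings, and the normalization that $\alpha_i$ meets $\beta_i$ positively at the diagonal — really assemble into the complex orientation of $\mathrm{Sym}^{N}(\Sigma)$. Since the statement concerns only the \emph{relative} sign of $\zzz$ and $\zzz'$, any global sign discrepancy is harmless, so I would phrase the argument so that it never needs to pin down the absolute normalization.
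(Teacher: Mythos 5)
Your strategy---reduce to the closed Heegaard Floer setting and identify the combinatorial sign $s$ with the local intersection sign of the $\alpha$- and $\beta$-tori in the symmetric product---is essentially the idea the paper uses, but your reduction step has a real gap. The diagram $\mathrm{AZ}(\zz)\cup\HH$ has $g+k$ $\alpha$-circles (the $g-k$ $\alpha$-circles of $\HH$ together with the $2k$ circles formed when the $\alpha$-arcs of $\HH$ glue to those of $\mathrm{AZ}(\zz)$), the $g$ $\beta$-circles of $\HH$, and $2k$ $\beta$-arcs of $\mathrm{AZ}(\zz)$ whose endpoints lie on the remaining boundary circle. Capping that boundary with a disk does not close the $\beta$-arcs into circles, and if you close them off artificially inside the disk you obtain $g+2k$ $\beta$-curves on a surface of genus $g+k$---which is not a Heegaard diagram, and not a place where tori $\mathbb{T}_{\balpha}$, $\mathbb{T}_{\bbeta}$ in a symmetric product of the correct dimension make sense. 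So your claim that the capped-off surface carries $N=g+k$ each of $\alpha$- and $\beta$-curves is false, and the orientation-transport argument cannot be run there as stated. The paper instead glues a genuine bordered diagram $\HH_\beta$, of genus $k$, carrying $k$ $\alpha$-circles and $2k$ $\beta$-arcs, so that the $\beta$-arcs of $\mathrm{AZ}(\zz)\cup\HH$ close up to circles and one obtains an honest closed Heegaard diagram of genus $g+2k$; it then quotes the result of Friedl--Juh\'asz--Rasmussen that the sign formula on a closed (equivalently, sutured) diagram computes the relative $\Z/2$ Maslov grading, and unglues by noting that pairing with a fixed generator $\bbb$ of $\HH_\beta$ changes $s$ by a constant factor.

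Your backup ``elementary domains plus multiplicativity'' argument also does not quite close. Verifying $s(\zzz)s(\zzz')=(-1)^{\mu}$ on embedded bigons and rectangles is correct, and multiplicativity of both sides under concatenation, together with evenness of periodic domains, is fine---but you still need to know that the given index-$1$ domain (equivalently, the two generators it joins) can be reached through a chain of bigons and rectangles, and this is not automatic in an arbitrary Heegaard diagram, so ``these cases suffice'' is unjustified as written. If you replace ``cap off with a disk'' by ``glue a bordered $\beta$-diagram'' the reduction works, and at that point citing the closed/sutured sign result is the cleanest way to conclude; a self-contained alternative would be to verify $s(\zzz)s(\zzz')=(-1)^{\mu(B)}$ combinatorially from Lipshitz's index formula, as is effectively done in that reference and in the paper's own proof of Theorem~\ref{domains}.
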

\begin{proof}
Glue a diagram $\HH_{\beta}$ to $\mathrm{AZ}(\zz)\cup \HH$ to obtain a closed diagram, and pick a generator ${\bf{b}}$ on $\HH_{\beta}$ so that $\bbb\otimes \zzz$ and $\bbb\otimes \zzz'$ are generators in the closed diagram. These new generators are connected by the same domain, so their  Maslov gradings differ by one. For any choice of completing the ordering and orientation on the $\alpha$ and $\beta$ curves for the closed diagram, define signs for the generators  as above, and observe that our sign definition agrees with that of  \cite[Section 2.4]{dsfh} (we can think of a closed Heegaard diagram for a manifold $Y$ as a sutured diagram for $Y(1)$ by removing an open neighborhood of the basepoint), so 
 any way of completing the ordering and orientation will yield $s(\bbb\otimes \zzz) = - s(\bbb\otimes \zzz')$. On the other hand,

 $$\textrm{sign}(\sigma_{\zzz}) \textrm{sign}(\sigma_{\zzz'}) =  \textrm{sign}(\sigma_{\bbb\otimes \zzz}) \textrm{sign}(\sigma_{\bbb\otimes\zzz'})$$
and 
$$\prod_{i=1}^{g+2k}(\bbb\otimes \zzz)_i\cdot \prod_{i=1}^{g+2k}(\bbb\otimes \zzz)_i = \prod_{i=1}^k\bbb_i \cdot \prod_{i=1}^{g+k}\zzz_i\cdot \prod_{i=1}^k\bbb_i \cdot \prod_{i=1}^{g+k}\zzz'_i  = \prod_{i=1}^{g+k}\zzz_i\cdot \prod_{i=1}^{g+k}\zzz'_i,$$
so 
\begin{equation*}
s(\zzz)s(\zzz') = s(\bbb\otimes \zzz)s(\bbb\otimes \zzz') = -1.  \qedhere
\end{equation*}
\end{proof}

\begin{lemma}\label{sadds}
With the above definitions,  if $\xxx\otimes \yyy$ is a generator of $\mathrm{AZ}(\zz)\cup \HH$, then 
$$s(\xxx\otimes \yyy) = s(\xxx)s(\yyy).$$
\end{lemma}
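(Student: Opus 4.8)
The plan is to compare, term by term, the two ingredients making up $s(\xxx\otimes\yyy)=\textrm{sign}(\sigma_{\xxx\otimes\yyy})\prod_i s((\xxx\otimes\yyy)_i)$ with the analogous ingredients of $s(\xxx)=\textrm{sign}(\sigma_{\xxx})\prod_i s(x_i)$ (the sign on $\az$ defined above) and of $s(\yyy)=\textrm{sign}(\sigma_{o(\yyy)})\,\textrm{sign}(\sigma_{\yyy})\prod_i s(y_i)$ (the sign on $\cfdhat(\HH)$). There is no holomorphic input: the statement is purely about how the combinatorial data of a generator splits under the gluing $\mathrm{AZ}(\zz)\cup\HH$, so the whole proof is a bookkeeping of orientations and of signs of permutations.

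First I would treat the product of local intersection signs. The intersection points of $\xxx\otimes\yyy$ are exactly the points of $\xxx$ (lying in the $\mathrm{AZ}(\zz)$ part) together with the points of $\yyy$ (lying in the $\HH$ part), so it suffices to know that each local sign is computed the same way in the glued diagram as in $\mathrm{AZ}(\zz)$, resp.\ $\HH$, by itself. This is the role of the orientation conventions fixed just before the lemma: the $\alpha$-arcs of $\mathrm{AZ}(\zz)$ are oriented $\alpha_i^-\to\alpha_i^+$ and those of $\HH$ are oriented $\alpha_i^+\to\alpha_i^-$ precisely so that they fit together coherently along $\zz$ into a single orientation of each glued $\alpha$-circle, while the $\beta$-orientations are just concatenated; hence no local sign changes and $\prod_i s((\xxx\otimes\yyy)_i)=\prod_i s(x_i)\cdot\prod_j s(y_j)$.

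The heart of the argument is the permutation identity $\textrm{sign}(\sigma_{\xxx\otimes\yyy})=\textrm{sign}(\sigma_{\xxx})\,\textrm{sign}(\sigma_{o(\yyy)})\,\textrm{sign}(\sigma_{\yyy})$, which then combines with the previous paragraph to give $s(\xxx\otimes\yyy)=s(\xxx)s(\yyy)$. The idea is that $\sigma_{\xxx\otimes\yyy}$ block-decomposes once the $\xxx$-points are separated from the $\yyy$-points. On the $\beta$-side the ordering of $\beta$-curves of $\mathrm{AZ}(\zz)\cup\HH$ is by construction the $\mathrm{AZ}(\zz)$-ordering followed by the $\HH$-ordering, and all $\xxx$-points use the first block while all $\yyy$-points use the second; thus the $\beta$-matching of $\xxx\otimes\yyy$ is literally the $\beta$-matching of $\xxx$ followed by that of $\yyy$, with no extra shuffle. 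On the $\alpha$-side the $\alpha$-curves carrying $\xxx$-points are interleaved among those carrying $\yyy$-points according to the ordering of the glued diagram, which is chosen to agree with that of $\HH$: a glued $\alpha$-circle $A_i$ carries an $\xxx$-point exactly when $i\in\bar o(\yyy)$ and a $\yyy$-point exactly when $i\in o(\yyy)$, while the $\HH$-internal $\alpha$-circles all carry $\yyy$-points. The permutation that re-sorts the occupied $\alpha$-curves into the block "those meeting $\xxx$, then those meeting $\yyy$" is then exactly the shuffle recorded by $\sigma_{o(\yyy)}$ — which is precisely why that factor was built into the definition of $s$ on $\cfdhat$ — and conjugating $\sigma_{\xxx\otimes\yyy}$ by this re-sorting permutation turns it into the block sum $\sigma_{\xxx}\oplus\sigma_{\yyy}$.

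I expect the main obstacle to be exactly this last permutation bookkeeping: one must track the sign of the $\alpha$-re-sorting shuffle, the effect of any reindexing of the $\beta$-blocks and of the $\mathrm{AZ}$-generator's own strand combinatorics (the $\phi$ part of $(S,T,\phi)$), and in particular the relation between $\sigma_{o(\yyy)}$ and $\sigma_{\bar o(\yyy)}$ (which differ by the order-$k$ block transposition), and verify that all these parity contributions combine to exactly $\textrm{sign}(\sigma_{o(\yyy)})$ with nothing left over. A useful consistency check along the way is Lemma \ref{sazh}: it already shows $s$ is a relative $\Z/2$-grading on $\mathrm{AZ}(\zz)\cup\HH$, so any sign error would surface as an incompatibility with how $\delta_1$ acts on the two sides — and indeed this lemma is exactly the device needed to push Lemma \ref{sazh} down to $\cfdhat(\HH)$ in the proof of Proposition \ref{scfd}.
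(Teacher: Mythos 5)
Your overall decomposition matches the paper's proof exactly: separate the product of local intersection signs (which multiplies cleanly because of the compatible orientation conventions on the glued $\alpha$-circles and the concatenated $\beta$-ordering) from the permutation sign, and identify the shuffle permutation $\sigma_{o(\yyy)}$ as the source of the extra sign factor built into $s$ on $\cfdhat$. This is precisely the paper's route, and you also correctly anticipate Lemma \ref{sazh}'s role downstream.

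The one genuine slip is the word ``conjugating'': you write that conjugating $\sigma_{\xxx\otimes\yyy}$ by the $\alpha$-re-sorting shuffle turns it into the block sum $\sigma_{\xxx}\oplus\sigma_{\yyy}$, but conjugation preserves sign, so that would give $\textrm{sign}(\sigma_{\xxx\otimes\yyy})=\textrm{sign}(\sigma_{\xxx})\,\textrm{sign}(\sigma_{\yyy})$ with no $\textrm{sign}(\sigma_{o(\yyy)})$ factor at all, contradicting the conclusion you need. The point is that the shuffle acts only on the $\alpha$-side (the domain of the permutation): the $\beta$-side is already in block form because all of $\xxx$'s occupied $\beta$-curves (the $\mathrm{AZ}(\zz)$ $\beta$-arcs) precede all of $\yyy$'s (the $\HH$ $\beta$-circles) in the concatenated ordering. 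So the relation is a genuine one-sided composition, not a conjugation; the paper records it as $\sigma_{\xxx\otimes\yyy} = \bar\sigma_{\xxx}\circ\bar\sigma_{\yyy}\circ\sigma_{o(\yyy)}$ (with $\bar\sigma_{\xxx},\bar\sigma_{\yyy}$ the block embeddings of $\sigma_{\xxx},\sigma_{\yyy}$ into $S_{g+k}$), and it is exactly because the shuffle appears once, unconjugated, that its sign survives. You do later say one must ``track the sign of the $\alpha$-re-sorting shuffle,'' which is the right instinct, but it is inconsistent with calling the operation a conjugation; fixing this resolves the one obstacle you flagged.
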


\begin{proof}
Let $\bar\sigma_{\xxx}\in S_{g+k}$ be the concatenation of $\sigma_{\xxx}$ and $\id_{S_g}$, and let $\bar\sigma_{\yyy}\in S_{g+k}$ be the concatenation of $\id_{S_k}$  and $\sigma_{\yyy}$. Note that $\bar\sigma_{\xxx}\circ \bar\sigma_{\yyy}$ is the concatenation of $\sigma_{\xxx}$ with $\sigma_{\yyy}$, and also equals $\bar\sigma_{\yyy}\circ \bar\sigma_{\xxx}$.
Denote $\xxx\otimes \yyy$  by $\zzz$ and observe that  $\sigma_{\zzz} = \bar\sigma_{\xxx}\circ \bar\sigma_{\yyy}\circ \sigma_{o(\yyy)}$, and so 
\begin{align*}
\textrm{sign} (\sigma_{\zzz} ) &=     \textrm{sign}(\bar\sigma_{\xxx}) \textrm{sign} (\bar\sigma_{\yyy})\textrm{sign}( \sigma_{o(\yyy)})  \\
& =  \textrm{sign}(\sigma_{\xxx}) \textrm{sign} (\sigma_{\yyy})\textrm{sign} (\sigma_{o(\yyy)})
\end{align*}
Then
\begin{align*}
s(\xxx)s(\yyy) &=   \textrm{sign}(\sigma_{\xxx})\prod_{i=1}^k s(x_i) \cdot  \textrm{sign} (\sigma_{o(\yyy)})  \textrm{sign}(\sigma_{\yyy})\prod_{i=1}^g s(y_i)\\
& = \textrm{sign}(\sigma_{\xxx}) \textrm{sign}(\sigma_{\yyy}) \textrm{sign} (\sigma_{o(\yyy)})  \prod_{i=1}^{g+k}z_i\\
& =  \textrm{sign} (\sigma_{\zzz} )   \prod_{i=1}^{g+k}z_i   \qedhere
\end{align*}

\end{proof}

\begin{proof}[Proof of Proposition \ref{scfd}]
Suppose $\aaa\yyy$ is in the differential of $\xxx$, so there are a $B\in \pi_2(\xxx,\yyy)$ and a sequence of Reeb chords $\rho$, such that $(B, \rho)$ is compatible,  $\textrm{ind} (B, \rho)  =1$, and $\aaa = a(\rho)$. Then if we glue $\mathrm{AZ}(\zz)$ to $\HH$, $B$ completes to a closed index $1$ domain from $I_D(\xxx)\otimes \xxx$ to $\aaa\otimes \yyy$, where we think of $I_D(\xxx)$ and $\aaa$ as generators of $\mathrm{AZ}(\zz)$. 

Lemmas \ref{sazh} and \ref{sadds}, along with the fact that idempotents have grading $0$, immediately imply that 
\begin{equation*}
-1 = s(I_D(\xxx)\otimes \xxx)s(\aaa\otimes \yyy) =  s(I_D(\xxx))s(\xxx)s(\aaa)(\yyy) = s(\xxx)s(\aaa)(\yyy). \qedhere
\end{equation*}
\end{proof}

\remark To conclude this section, we explain how to relate the $\Z/2$ grading from this section to the grading from Section \ref{z2sec}.
Recall $I(\sss)$ is in $I(\zz,0)$ whenever $|\sss|=k$. Given such $\sss$, look at $J(\sss) = (s_1, \ldots, s_k)$,  let $\rho_i^{\sss}$ be the Reeb chord from $\alpha_i^-$ to $\alpha_{s_i}^-$ whenever $i\neq s_i$,  and let $\brho^{\sss}$ be the set of all such Reeb chords. Choose grading refinement data $\sss_0:=\{1, \ldots, k\}$  and define $\psi(\sss):=\gr'(a(\brho^{\sss})) = (\iota(a(\brho^{\sss})); [\brho^{\sss}])$ for every other $\sss$. This specifies a refined grading $\gr$ on $\cala(\zz, 0)$. The resulting $\Z/2$ grading $m = f\circ \gr$ obtained by composing with the map $f$ from Section \ref{z2sec} agrees with $s$.  In other words, given $\aaa\in \cala(\zz, 0)$, then $s(\aaa) = (-1)^{m(\aaa)}$, and, for an appropriate choice of a base generator for $\cfdhat(\HH)$ in each $\mathrm{spin}^c$ structure, $ s(\xxx)= (-1)^{m(\xxx)}$. The proof that the two gradings agree is a rather tedious computation of the refined gradings of the half-strip domains on $\mathrm{AZ}(\zz)$, and, since it does not affect the results of this paper, we do not include it.

 \section{The Euler characteristic of bordered Heegaard Floer homology}\label{chiker}

In this section, we prove Theorem \ref{cfdker}.

 Let $(Y, \zz, \phi)$ be a bordered $3$-manifold with Heegaard diagram $\mathcal H$ (so $\bdy \mathcal H =- \zz$) of genus $g$, and let $k$ be the genus of $F(\zz)\cong \bdy Y$.   For simplicity, we assume that $\zz$ is the split pointed matched circle. At the end of this section we provide a simple handle slide argument to complete the proof of Theorem \ref{cfdker} for general $\zz$.

 Fix an ordering and orientation of all $\beta$-circles, $\alpha$-circles, and $\alpha$-arcs. Let  $M(\mathcal H)$ be the $(g+k)\times g$ signed intersection matrix given by 
$$ m_{ij} = \left\{ \begin{array}{ll} 
\#(\alpha_i\cap \beta_j) & \textrm{ if }  i\leq g-k\\
\#(\alpha_i^a\cap \beta_j) & \textrm{ if }  i> g-k.
\end{array} \right.
$$
We can read $[\cfdhat(Y)]$ from  $M(\mathcal H)$ in the following way. Fix a $k$-element subset $\sss\subset [2k]$ and let $M(\mathcal H)_{\sss}$ be the square matrix obtained from $M(\mathcal H)$ by deleting the rows corresponding to $\sss$. More precisely, if $i\in \sss$, we delete the $(g-k+i)^{\textrm{th}}$ row, i.e. the one corresponding to $\alpha^a_i$. Observe that  

$$ \det M(\mathcal H)_{\sss} =  \textrm{sign}(\sigma_{o(\xxx)}) \sum_{y\in\mathfrak S(\mathcal H), I_D(y) = I(\sss)}s(y) =  \textrm{sign}(\sigma_{o(\xxx)})  \sum_{y\in\mathfrak S(\mathcal H), I_D(y) = I(\sss)}(-1)^{m(y)}.$$

In other words,  $\det M(\mathcal H)_{\sss}$ is $\pm$ the coefficient of $a_\sss$ in  $[\cfdhat(Y)]$, where $a_\sss$ is the basis generator for $\Lambda^\ast H_1(F; \Z)$ defined in Section \ref{k0sec} corresponding to the set $\sss$.

Next, we relate $\ker (H_1(F(\zz))\to H_1(Y))$ to $M(\mathcal H)$. If we cap off $\mathcal H$ with a disk,  and close off each $\alpha^a_i$ inside the disk, we get a closed surface $\Sigma'$ of genus $g$ with $\{\alpha_1,\ldots, \alpha_{g-k}, \alpha'_1, \ldots, \alpha'_{2k}\}$ spanning a $(g+k)$-dimensional subspace of $H_1(\Sigma')$.  By adding in circles $\gamma_1,\ldots, \gamma_{g-k}$ such that 
\begin{align*}
\#(\gamma_i\cap \alpha_j) &= \left\{ \begin{array}{ll} 
1 & \textrm{ if }  i=j\\
0 & \textrm{ otherwise,}
\end{array} \right. \\
\#(\gamma_i\cap \alpha'_j) &= 0,
\end{align*}
we extend to a basis $\{\alpha_1,\ldots, \alpha_{g-k}, \gamma_1, \ldots, \gamma_{g-k},  \alpha'_2, \alpha'_1, \ldots, \alpha'_{2k}, \alpha'_{2k-1}\}$, which is dual to $\{\gamma_1, \ldots, \gamma_{g-k}, \alpha_1,\ldots, \alpha_{g-k}, \alpha'_1, \ldots, \alpha'_{2k}\}$. In other words, if 
$$\beta_i = \sum_{j=1}^{g-k} c_{ij}\gamma_j + \sum_{j=1}^{g-k} a_{ij}\alpha_j + \sum_{j=1}^{2k} a'_{ij}\alpha'_j,$$
then $\#(\beta_i\cap \alpha_j) = c_{ij}$, etc. Permute the last $2k$ rows of $M(\mathcal H)$ by swapping adjacent rows in pairs, i.e. for each $i$ exchange the rows corresponding to $\alpha'_{2i-1}$ and $\alpha'_{2i}$.  Call the new matrix $M'(\mathcal H)$. 
Let $\balpha$, $\boldsymbol{\alpha'}$, $\bbeta$, and  $\bgamma$ be the sets of $\alpha$, $\alpha'$, $\beta$, and $\gamma$ circles, respectively.   
The columns of $M'(\mathcal H)$ now represent $\beta$ circles as linear combinations of $\gamma$ and $\alpha'$ circles in the space $H_1(\Sigma')/H_1(\balpha)$. Note that the inclusion $H_1(\bdy Y)\to H_1(Y)$ is the composition of 
$$H_1(F)\hookrightarrow H_1(\Sigma')\twoheadrightarrow H_1(Y),$$
where the first map is the inclusion of the subspace $\Z\left<\boldsymbol{\alpha'}\right>$, and the second map is the quotient by all $\alpha$ and $\beta$ circles. In terms of our basis,
$$ \Z\left<\boldsymbol{\alpha'}\right>\overset{\iota}{\hookrightarrow} \Z\left<\bgamma, \balpha, \boldsymbol{\alpha'}\right>\overset{q}{\twoheadrightarrow} \Z\left<\bgamma, \balpha, \boldsymbol{\alpha'}\right>/\Z\left<\balpha, \bbeta\right>,$$
where $q$ is the quotient by the homology subspace generated by $\balpha\cup\bbeta$. Then $\ker (H_1(\bdy Y)\to H_1(Y)) = \ker (q\circ \iota)$, or if we first quotient by the space generated by $\balpha$, then $\ker (H_1(\bdy Y)\to H_1(Y)) = \ker(\bar q\circ \bar \iota)$ in the resulting sequence
$$ \Z\left<\boldsymbol{\alpha'}\right>\overset{\bar{\iota}}{\hookrightarrow} \Z\left<\bgamma, \boldsymbol{\alpha'}\right>\overset{\bar q}{\twoheadrightarrow} \Z\left<\bgamma, \boldsymbol{\alpha'}\right>/\Z\left<\bbeta\right>.$$
Define $V_{\beta} := \mathrm{span}\{\beta_1, \ldots, \beta_{g-k}\}\subset  \Z\left<\bgamma, \boldsymbol{\alpha'}\right>$ and $V_{\gamma} := \mathrm{span}\{\gamma_1, \ldots, \gamma_{g-k}\}\subset \Z\left<\bgamma, \boldsymbol{\alpha'}\right>$.
Now, $\ker (\bar q\circ \bar \iota)$ is isomorphic under $\bar\iota$ to $\im \bar \iota \cap \ker \bar q$, i.e. to  the subspace of $V_{\beta}$ that is perpendicular to $V_{\gamma}$. In other words,
$$\ker (\bar q\circ \bar \iota) = \{\pi_{\alpha'}(v)|v\in V_{\beta},  \pi_{V_{\gamma}}(v)=0\}.$$
We can change basis for $V_{\beta}$ by performing column operations on $M'(\mathcal H)$ so that     $\im \bar \iota \cap \ker \bar q$       is generated by the initial columns. This corresponds to handleslides of $\beta$-circles over $\beta$-circles, so the Heegaard diagram after the handleslides specifies the same bordered manifold. Thus, we may assume that $M'(\mathcal H)$ already has this form, i.e.  that $\im \bar \iota \cap \ker \bar q$       is generated by the initial columns of $M'(\mathcal H)$.
\begin{lemma}\label{rank}
Let $M^{top}$ be the submatrix of $M'(\mathcal H)$ formed by the top $g-k$ rows. The rank of $M^{top}$ is $g-k$ iff $H_1(Y, \bdy Y)$ is finite. 
\end{lemma}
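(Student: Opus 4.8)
The plan is to identify $H_1(Y,\bdy Y)$ directly as the cokernel of the integer map $M^{top}\colon\Z^{g}\to\Z^{g-k}$, after which the asserted equivalence is an elementary fact about integer matrices.

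First I would record a presentation of $H_1(Y)$ adapted to the basis fixed above. Since $Y$ is obtained from $\Sigma\times[0,1]$ by attaching $2$-handles exactly along the circles $\alpha^c_1,\dots,\alpha^c_{g-k}$ and $\beta_1,\dots,\beta_g$ (the $\alpha$-arcs contribute no relations, as they are not closed in $Y$), and since $\Sigma\hookrightarrow\Sigma'$ induces an isomorphism on $H_1$, we get $H_1(Y)=H_1(\Sigma')/\langle[\alpha^c_1],\dots,[\alpha^c_{g-k}],[\beta_1],\dots,[\beta_g]\rangle$. In the basis $\{\alpha^c_\bullet,\gamma_\bullet,\alpha'_\bullet\}$ of $H_1(\Sigma')\cong\Z^{2g}$ this reads $H_1(Y)=\Z\langle\bgamma,\boldsymbol{\alpha'}\rangle/B$, where $B$ is the subgroup spanned by the images $\bar\beta_j$ of the $\beta_j$ after killing the $\alpha^c$'s. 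By construction the $\gamma_i$-coordinate of $\beta_j$ equals, up to sign, $\#(\alpha^c_i\cap\beta_j)=m_{ij}$; equivalently, the top $g-k$ rows of $M'(\mathcal H)$ record exactly the $\bgamma$-components of the $\bar\beta_j$, so the projection $\pi_{\bgamma}\colon\Z\langle\bgamma,\boldsymbol{\alpha'}\rangle\to\Z\langle\bgamma\rangle$ sends $B$ onto the column span of $M^{top}$.

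Next I would feed this into the long exact sequence of the pair $(Y,\bdy Y)$. Because $\bdy Y$ and $Y$ are connected, $H_0(\bdy Y)\to H_0(Y)$ is an isomorphism, hence the connecting homomorphism $H_1(Y,\bdy Y)\to H_0(\bdy Y)$ vanishes and $H_1(Y,\bdy Y)=H_1(Y)/\im\big(i_\ast\colon H_1(\bdy Y)\to H_1(Y)\big)$. As already noted in the excerpt, $i_\ast$ factors as $H_1(F)\hookrightarrow H_1(\Sigma')\twoheadrightarrow H_1(Y)$ with the first map having image $\Z\langle\boldsymbol{\alpha'}\rangle$, so $H_1(Y,\bdy Y)=\Z\langle\bgamma,\boldsymbol{\alpha'}\rangle/(\Z\langle\boldsymbol{\alpha'}\rangle+B)$. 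Killing $\boldsymbol{\alpha'}$ identifies this with $\Z\langle\bgamma\rangle/\pi_{\bgamma}(B)=\Z^{g-k}/\mathrm{colspan}(M^{top})=\mathrm{coker}(M^{top})$; in particular the bottom $2k$ rows of $M'(\mathcal H)$ play no role.

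Finally, the cokernel of an integer map into $\Z^{g-k}$ is finite if and only if the map is surjective after $\otimes\,\Q$, i.e.\ if and only if $\rank_{\Q}M^{top}=g-k$; and since $M^{top}$ has only $g-k$ rows, this is simply full row rank. Together with the previous step this shows $H_1(Y,\bdy Y)$ is finite exactly when $\rank M^{top}=g-k$, as claimed. I do not anticipate a genuine obstacle: the content is linear algebra plus the standard handle presentation of $H_1(Y)$, and the only points needing care are the bookkeeping facts that the $\alpha$-arcs produce no relations in $H_1(Y)$ and that the top block of $M'(\mathcal H)$ is precisely the $\bgamma$-coordinate map — both of which are implicit in the set-up preceding the lemma.
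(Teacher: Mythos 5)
Your proof is correct, but it takes a genuinely different route from the paper's. The paper reduces the finiteness of $H_1(Y,\bdy Y)$ to the vanishing of $H_2(Y)$ via the universal coefficients theorem and Poincar\'e--Lefschetz duality, then uses the Mayer--Vietoris sequence for the Heegaard splitting to identify $H_2(Y)$ with $\ker\bigl(H_1(\balpha)\oplus H_1(\bbeta)\to H_1(\Sigma)\bigr)$, and finally observes that this kernel vanishes exactly when $\pi_\delta H_1(\balpha)$ --- the row span of $M^{top}$ --- has full dimension. You instead use the handle presentation of $H_1(Y)$ together with the long exact sequence of the pair $(Y,\bdy Y)$ to identify $H_1(Y,\bdy Y)$ outright with $\mathrm{coker}(M^{top})$, and then the lemma is the elementary statement that an integer matrix with $g-k$ rows has finite cokernel iff it has full row rank. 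Your approach is more elementary (no duality, no $H_2$), and it is slightly stronger: it produces the group $H_1(Y,\bdy Y)\cong\mathrm{coker}(M^{top})$, not just the rank criterion. Interestingly, the paper performs essentially the same chain of identifications as you do --- $H_1(Y,\bdy Y)\cong\Z\langle\bgamma,\boldsymbol{\alpha'}\rangle/\Z\langle\bbeta,\boldsymbol{\alpha'}\rangle\cong\Z^{g-k}/\im B$ --- but only \emph{after} the lemma, once the matrix has been put in block form by handleslides, in order to conclude $|H_1(Y,\bdy Y)|=\det B$. Your computation unifies the lemma with that later step: it yields both the rank criterion and the order count without ever invoking the block form.
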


\begin{proof}
Pick $\delta_1, \ldots, \delta_g$ dual to $\beta_1, \ldots, \beta_g$. The rows of $M^{top}(\mathcal H)$ record the intersections of a given  $\alpha$-circle with the $\beta$-circles, so they represent the linear combination of that $\alpha$-circle in terms of the $\delta$-circles.  
 
By the universal coefficients theorem, $H_1(Y, \bdy Y)$ is finite if and only if $H^1(Y, \bdy Y) = 0$, and by Poincar\'e-Lefschetz duality, $H^1(Y, \bdy Y)\cong H_2(Y)$. Let $\balpha$, $\bbeta$, and $\bdelta$ be the sets of $\alpha$-circles, $\beta$-circles, and $\delta$-circles, respectively.    The Mayer-Vietoris sequence for the Heegaard decomposition of $Y$ specified by $\mathcal H$ identifies $H_2(Y)$ with $\ker (H_1(\balpha)\oplus H_1(\bbeta)\to H_1(\Sigma))$, so finally, $H_1(Y, \bdy Y)$ is finite if and only if $\ker (H_1(\balpha)\oplus H_1(\bbeta)\to H_1(\Sigma))= 0$. But $H_1(\Sigma) = H_1(\bdelta)\oplus H_1(\bbeta)$, so the kernel is zero-dimensional exactly when $\pi_{\delta}H_1(\balpha)$ has dimension $g-k$. The projection $\pi_{\delta}H_1(\balpha)$ is exactly the span of the rows of $M^{top}(\mathcal H)$, so the dimension of the projection is the rank of $M^{top}(\mathcal H)$. 
\end{proof} 

Similarly, let $M^{bottom}$ be the submatrix of $M'(\mathcal H)$ formed by the bottom $2k$ rows. 
Note that for any $\sss$, $\rank M'(\mathcal H)_{\sss}\leq \rank M^{top} + \rank M^{bottom}$, so if $\rank H_1(Y, \bdy Y)>0$, then $\det(M'(\mathcal H)_{\sss})=0$, and so $[\cfdhat(Y)] = 0$. If $\rank H_1(Y, \bdy Y)=0$, then by Lemma \ref{rank} the matrix $M'(\mathcal H)$ has the block form 
\begin{displaymath}
\left(\begin{array}{c|c}
0 & B \\
\hline
A & C
\end{array}\right),
\end{displaymath}
where $0$ is the $(g-k)\times k$ zero matrix, and $B$ is a $(g-k)\times(g-k)$ matrix with $\det(B) = |H_1(Y, \bdy Y)|$. To see that $\det(B) = |H_1(Y, \bdy Y)|$, observe that the columns of $B$ represent  $\beta$-circles as linear combinations of $\gamma$ circles, after quotienting by all $\alpha'$ circles. On the other hand, by definition we have
\begin{align*}
H_1(Y,\bdy Y) &\cong H_1(Y)/\im(H_1(\bdy Y)\to H_1(Y))\\
&\cong ( \Z\left<\bgamma, \boldsymbol{\alpha'}\right>/\Z\left<\bbeta\right>)/\im  (\bar q\circ \bar \iota)\\
&\cong \Z\left<\bgamma,  \boldsymbol{\alpha'}\right>/\Z\left<\bbeta,  \boldsymbol{\alpha'}\right>\\
&\cong \Z\left<\bgamma\right>/\Z\left<\bbeta\right>\\
&\cong \Z^{g-k}/\im B.
\end{align*}
Since $H_1(Y,\bdy Y)$ is finite, then $B$ has full rank, and $|H_1(Y,\bdy Y)| = \det B$.

We already discussed that the columns of  
$\left(\begin{array}{c}
0 \\
\hline
A
\end{array}\right)
$
span $\im \bar \iota \cap \ker \bar q$ and the columns $c_1,\ldots, c_k\in \R^{2k}$ of $A$ span $\ker (\bar q\circ \bar \iota)$. Last, 
\begin{align*}
\Lambda^k \ker (\bar q\circ \bar \iota) & = \{v_1\wedge\cdots \wedge v_k| v_i\in \ker (\bar q\circ \bar \iota)  \}\\
&= \{\sum_{i=1}^k t_{1i} c_i\wedge\cdots\wedge \sum_{i=1}^k t_{ki} c_i | t_{ij}\in \Z \}\\
&= \{\sum_{\sigma\in S_k} t_{1\sigma(1)} c_{\sigma(1)}\wedge\cdots\wedge t_{k\sigma(k)} c_{\sigma(k)} | t_{ij}\in \Z \}\\
&= \mathrm{span} \{\sum_{\sigma\in S_k}  c_{\sigma(1)}\wedge\cdots\wedge  c_{\sigma(k)}\}\\
&= \mathrm{span}\{c_1\wedge\cdots\wedge  c_k\}\\
&= \mathrm{span}\{\sum_{i=1}^k a_{i1} e_i\wedge\cdots\wedge \sum_{i=1}^k a_{ik} e_i\}\\
&= \mathrm{span}\{\sum_{\sss\subset [2k]}\sum_{\sigma\in S_k}^k a_{\sigma(j)j}\textrm{sgn} (\sigma) e_{i_1}\wedge\cdots\wedge  e_{i_k}\}\\
&= \mathrm{span} \{\sum_{\sss\subset [2k]}\det A_{\sss} \cdot e_{i_1}\wedge\cdots\wedge  e_{i_k}\}\\
&= \mathrm{span} \{\sum_{\sss\subset [2k]} \det A_{\sss} a_\sss\}.
\end{align*}
 But $\det M(\mathcal H)_{\sss} = \det A_{\sss}\det B$, so 
\begin{align*}
\mathrm{span}[\cfdhat(Y, \zz, \phi)] &= \det B\cdot \Lambda^k \ker(H_1(F(\zz))\to H_1(Y))\\
&=  |H_1(Y, \bdy Y)| \Lambda^k \ker(H_1(F(\zz))\to H_1(Y)).
\end{align*}

This proves Theorem \ref{cfdker} in the case when $\zz$ is a split circle. The promised handle slide argument is merely the observation that arc slides correspond to row operations, and any pointed matched circle for a surface of genus $k$ can be obtained from the split one by a sequence of arc slides. Specifically, sliding $\alpha'_i$ over $\alpha'_j$ corresponds to adding the $(g-k+j)^{\textrm{th}}$ row to the $(g-k+i)^{\textrm{th}}$ row in $M(\mathcal H)$, and to adding the $(g-k+i)^{\textrm{th}}$ row to the $(g-k+j)^{\textrm{th}}$ row in $M'(\mathcal H)$. Row operations preserve determinants, and this completes the proof of Theorem \ref{cfdker}.

\bibliographystyle{/Users/inapetkova/Documents/work/hamsplain2}

\bibliography{/Users/inapetkova/Documents/work/master}

\end{document}